\newtheorem{theorem}{Theorem}
\newtheorem{lemma}[theorem]{Lemma}
\newtheorem{proposition}[theorem]{Proposition}
\newtheorem{definition}[theorem]{Definition}
\newtheorem{remark}[theorem]{Remark}
\newcommand{\QQ}{\mathbb{Q}}
\newcommand{\NN}{\mathbb{N}}
\newcommand{\EE}{\mathcal{E}}
\newcommand{\BB}{\mathcal{B}}
\newcommand{\QUT}{\mathbb{Q}_{UT}}
\title{Tuning and plateaux for the entropy of $\alpha$-continued fractions}
\author{Carlo Carminati, Giulio Tiozzo}
\address[Carlo Carminati]{Dipartimento di Matematica \\
Universit\`a di Pisa \\ Largo Bruno Pontecorvo 5, I-56127, Italy}
\email[Carlo Carminati]{carminat@dm.unipi.it}
\address[Giulio Tiozzo]{Department of Mathematics \\
Harvard University\\
One Oxford Street Cambridge MA 02138 USA}
\email[Giulio Tiozzo]{tiozzo@math.harvard.edu}
\begin{document}

\begin{abstract}
The entropy $h(T_\alpha)$ of $\alpha$-continued fraction
transformations is known to be locally monotone outside a closed, 
totally disconnected set $\EE$.  We will exploit the explicit description
of the fractal structure of $\EE$ to
investigate the self-similarities displayed by the graph of the
function $\alpha \mapsto h(T_\alpha)$. 
Finally, we completely characterize the plateaux occurring in this graph, 
and classify the local monotonic behaviour.
\end{abstract}

\maketitle

\section{Introduction}

It is a well-known fact that the continued fraction expansion of a real number 
can be analyzed in terms of the dynamics of the interval map $G(x) := \left\{ \frac{1}{x} \right\}$, 
known as the \emph{Gauss map}.  
A generalization of this map is given by the family of \emph{$\alpha$-continued fraction transformations} $T_\alpha$, 
which will be the object of study of the present paper.
For each $\alpha \in [0,1]$, the map $T_\alpha:[\alpha-1, \alpha] \to [\alpha-1, \alpha]$ 
is defined as $T_\alpha(0) = 0$ and, for $x \neq 0$,
$$T_\alpha(x) := \frac{1}{|x|} - c_{\alpha, x}$$ where $c_{\alpha, x}
    = \left\lfloor \frac{1}{|x|} + 1 - \alpha \right\rfloor$ is a
    positive integer. 
Each of these maps is associated to a different continued fraction
expansion algorithm, and the family $T_\alpha$ interpolates between maps associated to 
well-known expansions: $T_1 = G$ is the usual Gauss map which generates regular
continued fractions, while $T_{1/2}$ is associated to the \emph{continued
fraction to the nearest integer}, and $T_0$ generates the \emph{by-excess continued fraction} expansion.
For more about $\alpha$-continued fraction expansions, their metric properties and their relations with 
other continued fraction expansions we refer to \cite{N}, \cite{Sc}, \cite{IK}. This family has also been 
studied in relation to the Brjuno function \cite{MMY}, \cite{MCM}.

Every $T_\alpha$ has infinitely many branches, and, for $\alpha>0$,
all branches are expansive and $T_\alpha$ admits an invariant
probability measure absolutely continuous with respect to Lebesgue
measure. Hence, each $T_\alpha$ has a well-defined metric entropy
$h(\alpha)$: the metric entropy of the map $T_\alpha$ is proportional to the 
speed of convergence of the corresponding expansion algorithm (known as \emph{$\alpha$-euclidean algorithm}) \cite{BDV}, 
and to the exponential growth rate of the partial quotients in the $\alpha$-expansion of typical
values \cite{NN}.

%In this way $T_1=G$ coincides with the Gauss
%    map, while all the others are not much different: they all maps
%let us just mention the entropy is connected to the
%% Espandere: parlare di alpha-expansions & Kraaikamp; entropy legata a proprieta' statistiche di alpha-expansions

Nakada \cite{N}, who first investigated the properties of this family
of continued fraction algorithms, gave an explicit formula for $h(\alpha)$
for $\frac{1}{2} \leq \alpha \leq 1$, from which it is evident
that entropy displays a phase transition phenomenon when the parameter 
equals the golden mean $g:=
\frac{\sqrt{5}-1}{2}$
 (see also Figure \ref{tuningdiagram}, left):
\begin{equation}\label{eq:N81}
h(\alpha)= \left\{
\begin{array}{ll}
 \frac{\pi^2}{6 \log(1 + \alpha)} &
\textup{for } \frac{\sqrt{5}-1}{2} < \alpha \leq 1 \\
                         \frac{\pi^2}{6 \log \frac{\sqrt{5}+1}{2}} & \textup{for
}\frac{1}{2} \leq \alpha \leq \frac{\sqrt{5}-1}{2}
\end{array} 
\right.
\end{equation}

Several authors have studied the behaviour of the metric entropy of
$T_\alpha$ as a function of the parameter $\alpha$ (\cite{C},
\cite{LM}, \cite{NN}, \cite{KSS});
in particular Luzzi and Marmi \cite{LM} first produced numerical evidence that the
entropy is continuous, although it displays many more (even if less
evident) phase transition points and it is not monotone on the
interval $[0,1/2]$.  Subsequently, Nakada and Natsui \cite{NN} 
identified a dynamical condition that forces the entropy to be, 
at least locally, monotone: indeed, they noted that for some parameters $\alpha$,
the orbits under $T_\alpha$ of $\alpha$ and $\alpha-1$ collide after a number of steps, 
i.e. there exist $N, M$ such that: 
\begin{equation} \label{mat}
T_\alpha^{N+1}(\alpha) = T_\alpha^{M+1}(\alpha-1)
\end{equation}
and they proved that, whenever the {\it matching condition} \eqref{mat} 
holds, $h(\alpha)$ is monotone on a neighbourhood of $\alpha$.
They also showed that $h$ has {\em mixed monotonic behaviour}
near the origin: namely, for every $\delta>0$, in the interval
$(0,\delta)$ there are intervals on which $h(\alpha)$ is monotone, others on
which $h(\alpha)$ is increasing and others on which $h(\alpha)$ is decreasing.

In \cite{CT} it is proven that the set of parameters for which \eqref{mat} holds 
actually has full measure in parameter space. Moreover, such 
a set is the union of countably many open intervals, called \emph{maximal 
quadratic intervals}. Each maximal quadratic interval $I_r$ is labeled %%cc or labelled?
 by a rational 
number $r$ and can be thought of as a stability domain in parameter space: 
indeed, the number of steps $M, N$ it takes for the 
orbits to collide is the same for each $\alpha \in I_r$, and even the symbolic orbit of $\alpha$ and $\alpha-1$ up to the collision
 is fixed (compare to \emph{mode-locking} phenomena in the theory of circle maps).
For this reason, the complement of the union of all $I_r$ is called 
the \emph{bifurcation set} or \emph{exceptional set} $\EE$.

Numerical experiments \cite{LM}, \cite{CMPT} show the entropy function 
$h(\alpha)$ displays self-similar features: the main goal of this paper is 
to prove such self-similar structure by exploiting the self-similarity of 
the bifurcation set $\EE$. 

The way to study the self-similar structure was suggested to us by the unexpected 
isomorphism between $\EE$ and the real slice of the boundary of the Mandelbrot set
\cite{BCIT}. In the family of quadratic polynomials, Douady and Hubbard \cite{DH}
described the small copies of the Mandelbrot set which appear inside the large
Mandelbrot set as images of \emph{tuning operators}: we define a 
similar family of operators using the dictionary of \cite{BCIT}. 
(We refer the reader to the Appendix for more about this correspondence, even 
though knowledge of the complex-dynamical picture is strictly speaking not
necessary in the rest of the paper.)

Our construction is the following: we associate, to each rational number $r$
indexing a maximal interval, a \emph{tuning map} $\tau_r$
from the whole parameter space of $\alpha$-continued fraction transformations to 
a subset $W_r$, called \emph{tuning window}. 
Note that $\tau_r$ also maps the bifurcation set $\mathcal{E}$ into itself. %  , and we call its image 
%, and the same is true for the set of labels $\QQ_E$ (see Proposition \ref{qrenorm}).
%and acts on the set $\mathbb{Q}_E$ of labels 
%of maximal quadratic intervals.
A tuning window $W_r$ is called \emph{neutral} if the alternating sum of the partial quotients of $r$ is zero.
Let us define a \emph{plateau} of a real-valued function as a maximal, connected open set where the function is constant.

% allows us to use, in our setting, an analogue
%of the {\em tuning operators} which have originally been introduced by
%Douady-Hubbard  to study the dynamics of the quadratic
%family (we refer to the Appendix for more details about this
%interesting side issue).
   
%A common way to study a self-similar object is to define
%renormalization operators which act on the parameter space of a
%particular class of dynamical systems.  Tuning operators are the
%inverse of renormalization operators: in \cite{CT2}, by taking as a
%model the Douady-Hubbard tuning for quadratic maps, tuning operators
%for continued fractions are defined.

%In a nutshell, to each rational number $r$
%indexing a maximal interval, we associate a \emph{tuning window} 
%$W_r$ and a \emph{tuning map} $\tau_r : [0, 1] \rightarrow W_r$
%which maps parameter space into itself and preserves both the bifurcation set $\EE$ and the set of extremal rational $\QQ_E$.

\begin{theorem} \label{plateaux}
The function $h$ is constant on every neutral tuning window $W_r$, and
every plateau of $h$ is the interior of some neutral tuning window
$W_r$.
\end{theorem}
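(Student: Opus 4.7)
The plan is to reduce the theorem, in both directions, to a computation of $h$ on each matching interval $I_s$, together with the structural properties of the tuning map $\tau_r$. The starting point is that on every matching interval the entropy $h(\alpha)$ admits a closed-form expression in the spirit of Nakada's formula \eqref{eq:N81}: via Rokhlin's identity, $h(\alpha)$ is inversely proportional to the Lebesgue measure of the natural extension domain $\Omega_\alpha$ of $T_\alpha$, and $\mu(\Omega_\alpha)$ can be computed explicitly from the symbolic data associated to the matching.

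For the direct part, I would first use that $\tau_r$ carries matching intervals bijectively onto matching intervals inside $W_r$, and then derive an affine relation expressing $\mu(\Omega_{\tau_r(\alpha)})$ in terms of $\mu(\Omega_\alpha)$, whose coefficients depend only on the symbolic expansion of $r$. The neutrality condition on $r$ (vanishing of the alternating sum of its partial quotients) should correspond exactly to the vanishing of the $\alpha$-dependent coefficient in this relation, so that $\mu(\Omega_{\tau_r(\alpha)})$, and hence $h(\tau_r(\alpha))$, depends only on $r$ and not on $\alpha$. Constancy of $h$ on every matching interval contained in $W_r$, combined with continuity of $h$ and the fact that such matching intervals are dense in $W_r$ (since $\EE\cap W_r$ is totally disconnected), then yields the first claim.

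For the converse, let $J$ be a plateau of $h$. Since $h$ is locally monotone outside $\EE$, and since the explicit formula above shows $h$ is strictly monotone on every matching interval not contained in a neutral tuning window, $J$ must contain points of $\EE$ that come from the ``neutral'' part of the tuning hierarchy. Using the decomposition of $\EE$ into tuning windows (and their iterated tunings), one locates a maximal neutral tuning window $W_r$ whose interior lies in $J$; maximality of $J$ as a connected open set of constancy then forces $J$ to coincide with this interior.

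The main obstacle will be the derivation of the precise affine relation for $\mu(\Omega_{\tau_r(\alpha)})$ under tuning, and the identification of the role played by the alternating sum of the partial quotients of $r$; this requires tracking how the rectangles making up the natural extension domain transform under the concatenation of symbolic words implementing tuning. On the converse side, the delicate point is to rule out that a plateau could straddle the boundary of a tuning window, or accumulate on non-neutral windows; this will require quantitative estimates on the variation of $h$ across small matching intervals clustering near such points, presumably drawing on the self-similarity results developed earlier in the paper.
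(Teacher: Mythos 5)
There is a genuine gap in the first half of your argument, at the step ``constancy of $h$ on every matching interval contained in $W_r$, combined with continuity of $h$ and density of such intervals, yields the claim.'' What the known machinery (the Nakada--Natsui criterion, i.e.\ Proposition \ref{Nakada}, together with the multiplicativity of the matching index under tuning) actually gives is that $h$ is constant \emph{on each} maximal interval $I_{\tau_r(p)}\subseteq W_r$ separately; it does not give that the constant is the \emph{same} across different intervals. With only local constancy off the set $\EE\cap W_r$, density plus continuity is insufficient: $\EE\cap W_r$ has positive Hausdorff dimension, so a devil-staircase behaviour is a priori possible. You try to bridge this by positing a closed-form expression for $h$ on every matching interval via the natural extension, and an affine relation for $\mu(\Omega_{\tau_r(\alpha)})$ in terms of $\mu(\Omega_\alpha)$ whose $\alpha$-dependent coefficient vanishes exactly when $\llbracket r\rrbracket=0$. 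No such global formula is available (Nakada's formula \eqref{eq:N81} covers only $\alpha\in[1/2,1]$, and the Nakada--Natsui argument is a local perturbative computation of $\frac{d}{d\alpha}\mu(\Omega_\alpha)$, not an evaluation of $\mu(\Omega_\alpha)$), and deriving it would be at least as hard as the theorem itself. The paper circumvents this entirely: it invokes the H\"older continuity of $h$ of any exponent $\eta<1/2$ (Theorem \ref{hexp}) and the estimate $\textup{H.dim}(\EE\cap W_r)\le\frac{\log 2}{\log 5}<\frac12$ (Lemma \ref{TW}, via the regular Cantor set structure of $\BB(\omega)\cap[\omega,\alpha_0]$ from Proposition \ref{concat}), so that Lemma \ref{extend} rules out the staircase. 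Some quantitative input of this kind is unavoidable, and your proposal does not supply a substitute.

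The converse half is also essentially unproved as written. The two delicate points you flag --- that a plateau cannot extend past the endpoints of the relevant window and cannot sit inside a larger region of constancy --- are exactly where the paper's work lies: it introduces \emph{dominant} parameters, shows they are dense in the set $UT$ of untuned parameters (Proposition \ref{density}), and deduces that every point of $\overline{UT}\setminus\{g\}$ is accumulated by \emph{non-neutral} maximal quadratic intervals (Proposition \ref{rightaprrox}); combined with the untuned factorization $r=\tau_{r_m}\circ\dots\circ\tau_{r_1}(r_0)$ and multiplicativity of $\llbracket\cdot\rrbracket$, this pins the plateau down as $\overset{\circ}{W_s}$ for $s$ of type (NR) or (FR) in Theorem \ref{last}. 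Note also that ``maximal neutral tuning window'' needs care: if $r=\tau_{r_1}(r_0)$ with $\llbracket r_1\rrbracket=0$ then $W_r$ is neutral but its interior is \emph{not} a plateau, being contained in the larger constancy region $W_{r_1}$; so the correct statement requires the classification by (NR)/(FR), which your sketch does not reach.
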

Even more precisely, we will characterize the set of rational numbers
$r$ such that the interior of $W_r$ is a plateau (see Theorem \ref{last}).  A particular case of the
theorem is the following recent result \cite{KSS}:
$$h(\alpha) = \frac{\pi^2}{6 \log(1+g)} \qquad \forall \alpha \in [g^2, g],$$
and $(g^2, g)$ is a plateau (i.e. $h$ is not constant on $[t,g]$ for any $t<g^2$).

On non-neutral tuning windows, instead, entropy is non-constant and 
$h$ reproduces, on a smaller scale, its behaviour on the whole parameter space $[0,1]$.

\begin{theorem} \label{main}
If $h$ is increasing on
 a maximal interval  $I_r$, then the monotonicity of $h$ on the tuning window $W_r$
reproduces the behaviour on the interval $[0, 1]$, but with reversed sign:
more precisely, if $I_p$ is another maximal interval, then
\begin{enumerate}
\item
$h$ is increasing on $I_{\tau_r(p)}$ iff it is decreasing on $I_p$;
\item
$h$ is decreasing on $I_{\tau_r(p)}$ iff it is increasing on $I_p$;
\item
$h$ is constant on $I_{\tau_r(p)}$ iff it is constant on $I_p$.
\end{enumerate}
If, instead, $h$ is 
decreasing on $I_r$, then the monotonicity of $I_p$ and $I_{\tau_r(p)}$ is the same.
\end{theorem}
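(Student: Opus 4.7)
The plan is to prove Theorem \ref{main} by deriving an explicit functional relation between $h$ on a non-neutral tuning window $W_r$ and $h$ on the base parameter space $[0,1]$, and then reading off the monotonicity from this relation. Specifically, I expect to show that for every $\alpha \in [0,1]$ one has a formula of the shape
\begin{equation*}
h(\tau_r(\alpha)) \;=\; \frac{h(\alpha)}{a_r + b_r\, h(\alpha)},
\end{equation*}
where $a_r, b_r$ are real constants depending only on $r$, with $b_r$ proportional to the alternating sum $S(r)$ of the partial quotients of $r$. Since this Möbius-type map is strictly monotone in $h(\alpha)$ on the relevant range, with sign equal to the sign of $a_r$, comparing signs will deliver the three bullets of the theorem at once: in particular, the constant case corresponds to $h(\alpha)$ already being constant on $I_p$, and the orientation (preserving versus reversing) is governed by whether $b_r > 0$ or $b_r < 0$, which I plan to identify with whether $h$ is decreasing or increasing on $I_r$.

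The key step is to induce $T_{\tau_r(\alpha)}$ on a suitable subinterval of its domain so as to recover a map conjugate to $T_\alpha$. The results of \cite{CT} describing the matching structure on $I_r$, together with the combinatorial definition of $\tau_r$ from the dictionary of \cite{BCIT}, should give that within $W_r$ the dynamics of $T_{\tau_r(\alpha)}$ is essentially a first return map, with a common return time $N_r$ and common symbolic itinerary inherited from the matching data of $I_r$. Applying Abramov's formula to this induced system then produces
\begin{equation*}
h(\tau_r(\alpha)) \;=\; \frac{h(\alpha)}{\int \varphi_r \, d\mu_\alpha},
\end{equation*}
where $\varphi_r$ is an explicit return-time function determined by the combinatorics of $r$ and $\mu_\alpha$ is the invariant probability for $T_\alpha$. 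The point is that $\int \varphi_r\,d\mu_\alpha$ is affine in the Lebesgue measures of finitely many cylinders, and these measures can in turn be written as $a_r + b_r\, h(\alpha)$ using the Rokhlin formula $h(\alpha)=\int \log|T_\alpha'|\,d\mu_\alpha$ together with the explicit piecewise-linear structure of $\log|T_\alpha'|$ on the cylinders appearing in the symbolic word of $r$.

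Having established the formula, the remaining task is to control the sign of $b_r$. I plan to compute $b_r$ by evaluating the formula at $\alpha = 1$, where Nakada's formula \eqref{eq:N81} gives $h(1)=\pi^2/(6\log 2)$ explicitly, and at a second distinguished parameter such as the golden mean, so that the two resulting linear equations determine $a_r$ and $b_r$ in terms of the partial quotients of $r$. A short combinatorial check should then show that $\mathrm{sign}(b_r)$ equals the sign of the entropy slope on $I_r$: both quantities should reduce to the same alternating sum $S(r)$ (up to an explicit positive factor), which is also exactly the quantity vanishing in the neutral case. This is consistent with Theorem \ref{plateaux}, because when $S(r)=0$ the Möbius transformation degenerates to a constant, forcing $h$ to be constant on all of $W_r$.

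The main obstacle I anticipate is the bookkeeping in the induced-map computation: identifying the correct return time $N_r$ and return set for $T_{\tau_r(\alpha)}$ uniformly in $\alpha$, and confirming that the resulting induced map is \emph{genuinely} conjugate to $T_\alpha$ rather than only orbit-equivalent. Once this conjugacy is pinned down, the Abramov-plus-Rokhlin computation and the sign analysis of $b_r$ are essentially algebraic.
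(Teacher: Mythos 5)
Your plan goes in a genuinely different direction from the paper, but as it stands it rests on two claims that are not established and that I do not think you can establish at this level of generality. First, the ``key step'' --- that $T_{\tau_r(\alpha)}$ admits an induced (first-return) map conjugate to $T_\alpha$, uniformly in $\alpha$ --- is a full renormalization statement for the family of $\alpha$-continued fractions. Tuning in this paper is defined purely symbolically, as a substitution on the continued fraction expansion of the \emph{parameter}; neither \cite{CT} nor \cite{BCIT} provides a conjugacy between the dynamics at the tuned parameter and the dynamics at the original one, and nothing of the sort is needed for the theorem. Second, even granting such a conjugacy and Abramov's formula, the assertion that $\int \varphi_r\, d\mu_\alpha$ is an \emph{affine} function $a_r + b_r\, h(\alpha)$ of the entropy, with coefficients depending only on $r$, is unsupported: the $\mu_\alpha$-measures of the relevant cylinders depend on the invariant density of $T_\alpha$, and Rokhlin's formula gives no reason for them to be affine in $h(\alpha)$. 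There is also an internal inconsistency in the intended endgame: for the M\"obius map $t \mapsto t/(a_r + b_r t)$ to degenerate to a constant (the neutral case, where $h$ is constant on $W_r$) you need $a_r = 0$, not $b_r = 0$, while its monotonicity is governed by the sign of $a_r$; so $a_r$, not $b_r$, would have to be the coefficient proportional to the alternating sum, contrary to what you wrote, and your two sentences about which coefficient controls the orientation contradict each other.

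The theorem does not require any of this machinery. The paper's proof is purely combinatorial: by Proposition \ref{Nakada} and formula \eqref{eq:MNindex}, the monotonicity type of $h$ on a maximal interval $I_q$ is determined by the sign of the alternating digit sum $\llbracket q \rrbracket = M - N$; a short computation with the substitution $a_j \mapsto S_1 S_0^{a_j - 1}$ (each block having odd length) gives $\llbracket \tau_r(p) \rrbracket = -\llbracket r \rrbracket\, \llbracket p \rrbracket$ (Lemma \ref{ss}); and Proposition \ref{qrenorm}(ii) guarantees that $I_{\tau_r(p)}$ is again a maximal interval, so the sign comparison is immediate. If you want to salvage your approach, you would first have to prove the renormalization conjugacy as a separate theorem; that would be interesting in its own right, but it is a much harder statement than the one you are trying to prove.
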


\begin{figure}[ht] 
\centering
\includegraphics[scale=0.2]{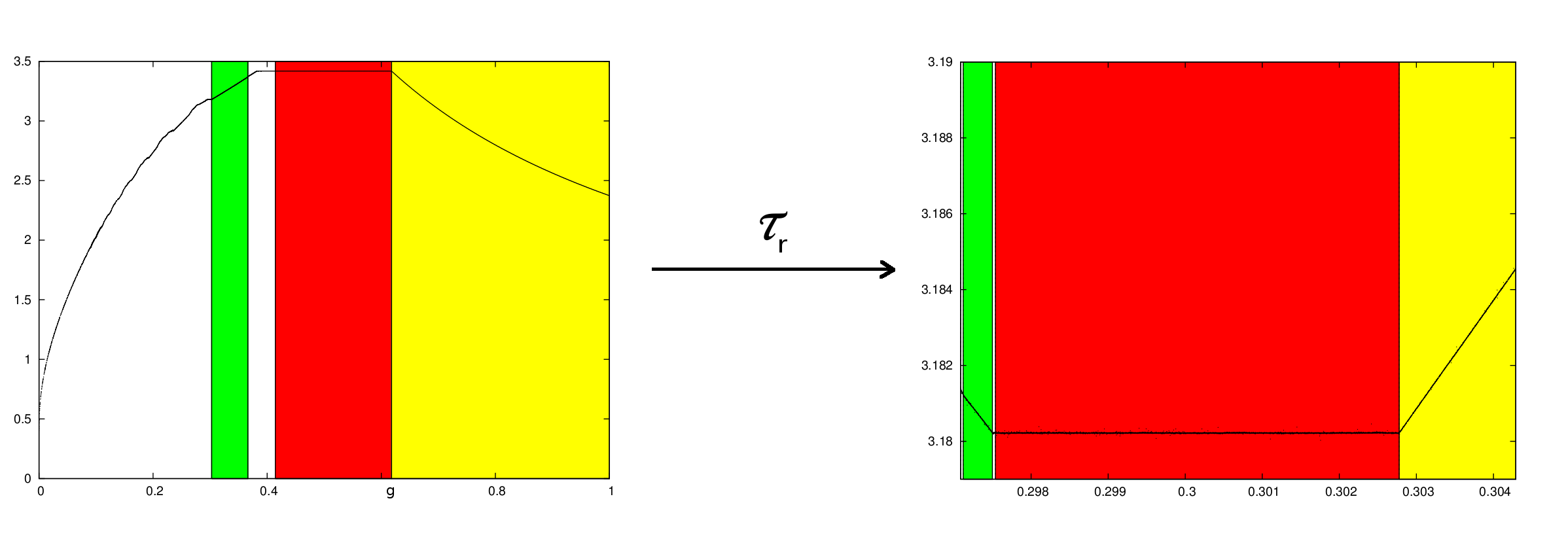}
\caption{An illustration of Theorem \ref{main} is given in the picture: on the left, you see the whole parameter space $[0,1]$, and the graph of $h$. 
The colored strips correspond to three maximal intervals. On the right, %%cc
 $x$ ranges on the tuning window $W_{1/3} = [\frac{5 - \sqrt{3}}{22}, \frac{\sqrt{3}-1}{2})$ 
relative to $r = 1/3$. 
Maximal intervals on the left are mapped via $\tau_r$ to maximal intervals of the same color on the right. As prescribed by Theorem \ref{main}, 
the monotonicity of $h$ on corresponding intervals is reversed. Note that in the white strips (even if barely visible on the right) there are 
infinitely many maximal quadratic intervals.
}
\label{tuningdiagram}
\end{figure}

As a consequence, we can also completely classify the local monotonic behaviour of
 the entropy function $\alpha \mapsto h(\alpha)$:

\begin{theorem} \label{classmon}
Let $\alpha$ be a parameter in the parameter space of $\alpha$-continued fractions. Then:
\begin{enumerate}
 \item if $\alpha \notin \mathcal{E}$, then $h$ is monotone on a neighbourhood of $\alpha$;
\item if $\alpha \in \mathcal{E}$, then either
\begin{itemize}
 \item[(i)] 
$\alpha$ is a \emph{phase transition}: $h$ is constant on the left of $\alpha$ and strictly monotone (increasing or decreasing) on the right of $\alpha$;
\item[(ii)]
$\alpha$ lies in the interior of a 
%%cc maximal 
\emph{neutral tuning window}: then $h$ is constant on a neighbourhood of $\alpha$;
\item[(iii)]
otherwise, $h$ has \emph{mixed monotonic behaviour} at $\alpha$, i.e. in every neighbourhood of $\alpha$ there are infinitely many 
intervals on which $h$ is increasing, infinitely many on which it is decreasing and infinitely many on which it is constant.
\end{itemize}
\end{enumerate}
\end{theorem}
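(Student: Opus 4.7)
The plan is to assemble the classification from three ingredients already in place: Nakada and Natsui's theorem that matching implies local monotonicity, together with their observation of mixed monotonic behaviour near $\alpha=0$; Theorem \ref{plateaux}; and Theorem \ref{main}. Case (1) is immediate, since $\alpha\notin\mathcal{E}$ means $\alpha$ lies in some open maximal quadratic interval $I_r$ on which matching holds, so Nakada-Natsui gives local monotonicity. Case (2)(ii) is a direct application of Theorem \ref{plateaux}: the interior of a neutral window is an open set on which $h$ is constant, so $h$ is constant in a neighbourhood of any interior point.

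For case (2)(i), I would identify phase transitions as the right endpoints of neutral tuning windows. If $W_r$ is neutral with right endpoint $\alpha^+$, Theorem \ref{plateaux} makes $h$ constant on $\mathrm{int}\,W_r$, while maximality of the plateau forbids $h$ from being constant on any right-neighbourhood of $\alpha^+$. To upgrade this to \emph{strict} monotonicity on the right, I would use the fact that $\alpha^+$ is accessible from the right by a coherent family of maximal intervals (either a single adjacent maximal interval, or the image under a tuning map of such a configuration) whose direction of monotonicity is determined by Theorem \ref{main}; since all these intervals are coherently increasing or coherently decreasing, $h$ is strictly monotone immediately to the right of $\alpha^+$.

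Case (2)(iii) is the substantive part. Fix $\alpha\in\mathcal{E}$ which is neither a phase transition nor an interior plateau point. The plan is to produce a nested chain of non-neutral tuning windows $W_{r_1}\supset W_{r_2}\supset\cdots$ accumulating at $\alpha$: the fractal structure of $\mathcal{E}$ recalled earlier forces every exceptional parameter to either eventually land in a neutral window (which places us in case (i) or (ii)) or to be the limit of such a chain of non-neutral windows. Within each $W_{r_k}$, Theorem \ref{main} transports the Nakada-Natsui mixed behaviour at $0$, together with plateau intervals supplied by Theorem \ref{plateaux}, yielding infinitely many increasing, infinitely many decreasing, and infinitely many constant maximal intervals inside $W_{r_k}$. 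Intersecting with shrinking neighbourhoods of $\alpha$ then gives the claimed mixed monotonic behaviour.

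The main obstacle is the first step of case (iii): rigorously showing that every $\alpha\in\mathcal{E}$ outside cases (i) and (ii) is the limit of an infinite chain of non-neutral tuning windows, rather than getting trapped on the boundary of a neutral one in some degenerate way. This is a combinatorial statement about how neutral and non-neutral windows interlock in $\mathcal{E}$; I expect it to require the explicit description of $\mathcal{E}$ in terms of tuning words, together with a finite case analysis of what can happen at boundary points of the $W_r$, in order to force either the phase-transition structure of (i) or the infinite non-neutral nesting above.
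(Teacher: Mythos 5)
Your skeleton (split off the non-exceptional case via Proposition \ref{Nakada}, the interior of neutral windows via Proposition \ref{neutraltuning}/Theorem \ref{plateaux}, and transport mixed behaviour through non-neutral windows via Theorem \ref{main}) matches the paper's, but there are two genuine gaps. First, your identification of phase transitions as ``the right endpoints of neutral tuning windows'' is false. Take $r\in\QUT$ with $\llbracket r\rrbracket=0$ and $r\neq 1/2$: the root $\alpha_0$ of $W_r$ lies in $UT\setminus\{g\}$, and by Proposition \ref{density} it is accumulated \emph{from the right} by dominant parameters, hence (Proposition \ref{rightaprrox}) by non-neutral maximal intervals of both signs together with constant ones --- so $h$ has mixed behaviour, not strict monotonicity, to the right of $\alpha_0$, and such a point belongs to case (iii), not (i). The actual phase transitions are exactly $g$ and its tuned images $\tau_r(g)$ with $\llbracket r\rrbracket\neq 0$; strict monotonicity on the right of $g$ comes from Nakada's explicit formula \eqref{eq:N81} together with the fact that $\mathcal{E}\subseteq[0,g]$ (so there is nothing to the right of $g$ to spoil it), and this is what gets transported by $\tau_r$. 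Your ``coherent family of maximal intervals'' has no justification and, as the counterexample shows, no such coherence holds at a generic neutral root.

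Second, the dichotomy you propose for case (iii) --- every exceptional $\alpha$ either lands in a neutral window or is the limit of a nested chain of non-neutral tuning windows --- breaks down precisely on the largest part of $\mathcal{E}$: an untuned parameter $\alpha\in UT\setminus\{g\}$ (a set of full Hausdorff dimension, by Lemma \ref{HDUT}) lies in \emph{no} tuning window whatsoever, so no nested chain of windows shrinks onto it; the same issue recurs for finitely renormalizable $\alpha=\tau_r(y)$ with $y\in UT\setminus\{g\}$. For these parameters the mixed behaviour must be produced by one-sided accumulation by non-neutral maximal quadratic intervals, and the paper obtains this from the theory of \emph{dominant} parameters: density of dominant parameters in $UT\setminus\{g\}$ (Proposition \ref{density}) plus the perturbation $S_0\mapsto S_0^mB$ of Lemma \ref{newdom}, which turns a neutral dominant string into nearby non-neutral extremal ones. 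This is exactly the ``main obstacle'' you flag at the end, but it is not a routine finite case analysis of how windows interlock --- it is the key new ingredient of the proof, and without it cases (2) and (3b) of the classification are unproven. (A further small omission: in the infinitely renormalizable case one must also check that if $\alpha$ lies in some neutral window it lies in its \emph{interior}, which the paper gets from the observation that the left endpoint $\omega=\tau_r(0)$ is not infinitely renormalizable.)
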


Note that all cases occur for infinitely many parameters: more precisely, 1. occurs for a set of 
parameters of full Lebesgue measure; 2.(i) for a countable set of parameters; 2.(ii) for a set of parameters whose Hausdorff 
dimension is positive, but smaller than $\frac{1}{2}$; 
2.(iii) for a set of parameters of Hausdorff dimension $1$. 
Note also that all phase transitions are of the form $\alpha = \tau_r(g)$, i.e. they are 
tuned images of the phase transition at $\alpha = g$ which is described by formula \eqref{eq:N81}. The largest parameter for which 2.(iii) occurs is indeed 
$\alpha =  g^2$, which is the left endpoint of the neutral tuning window $W_{1/2}$. Moreover, there is an explicit algorithm to decide, 
whenever $\alpha$ is a quadratic irrational, which of these cases occurs.

The structure of the paper is as follows. In section \ref{backg}, we introduce basic notation and definitions 
about continued fractions, and in section \ref{match} we recall the construction and results from \cite{CT} which 
are relevant in this paper. We then define the tuning operators and establish their basic properties (section \ref{sec:TW}),
and discuss the behaviour of tuning with respect to monotonicity of entropy, thus proving Theorem \ref{main} (section \ref{sec:mono}).
In section \ref{hoelder} we discuss untuned and dominant parameters, and use them to prove the characterization of plateaux (Theorem \ref{plateaux}
 above, and Theorem \ref{last}). Finally, section \ref{sec:class} is devoted to the proof of Theorem \ref{classmon}.

\section{Background and definitions} \label{backg}

\subsection{Continued fractions.}
%One remarkable feature of the bifurcation set $\EE$ is that it can be
%described just in terms of the ordinary continued fraction expansion 
%(i.e. not referring in any way to $\alpha$-continued fractions).
%Let us first recall  from \cite{CT} some notation, definitions and results concerning $\EE$.

The continued fraction expansion of a number 
$$x = \frac{1}{a_1+ \frac{1}{a_2 + \dots}}$$
will be denoted by $x = [0; a_1, a_2, \dots]$, and the $n^{th}$ convergent of $x$ will be denoted by
$\frac{p_n}{q_n}:=[0;a_1,...,a_n]$. Often we will also use the 
compact notation $x = [0; S]$ where $S = (a_1, a_2, \dots)$ is the
(finite or infinite) string of partial quotients of $x$. 

If $S$ is a finite string, its length will be denoted by $|S|$. 
A string $A$ is a \emph{prefix} of $S$ if there exists a (possibly empty) string $B$ such that $S = AB$; 
$A$ is a \emph{suffix} of $S$ if there exists a (possibly empty) string $B$ such that $S = BA$; $A$ is 
a \emph{proper suffix} of $S$ if there exists a non-empty string $B$ such that $S = BA$.

\begin{definition}
Let $S = (s_1, \dots, s_n)$, $T = (t_1, \dots, t_n)$ be two strings of positive integers of equal length. We
say that $S < T$ if there exists $0 \leq k < n$ such that 
$$s_i = t_i \qquad \forall 1 \leq i \leq k
 \qquad  \qquad \mbox{ and }  \qquad  \qquad
\left\{ \begin{array}{cc} s_{k+1} < t_{k+1} & \textup{if }k\textup{ is odd} \\
                            s_{k+1} > t_{k+1} & \textup{if }k\textup{ is even}              
          \end{array} \right.
$$ 
\end{definition}

This is a total order on the set of strings of given length, and it is defined so that $S < T$ iff $[0; S] < [0; T]$. 
As an example, $(2, 1) < (1, 1) < (1, 2)$.
Moreover, this order can be extended to a partial order on the set of all finite strings of positive integers in the following way:
\begin{definition}
If $S = (s_1, \dots, s_n)$, $T = (t_1, \dots, t_m)$ are strings of finite (not necessarily equal) length, then 
we define $S << T$ if there exists $0 \leq k < \min\{n, m\}$ such that
$$s_i = t_i \qquad \forall 1 \leq i \leq k 
\qquad \qquad \mbox{ and } \qquad  \qquad
\left\{ \begin{array}{cc} s_{k+1} < t_{k+1} & \textup{if }k\textup{ is odd} \\
                            s_{k+1} > t_{k+1} & \textup{if }k\textup{ is even}              
          \end{array} \right.
$$ 
\end{definition}

As an example, $(2, 1) << (1)$, and $(2, 1, 2) << (2, 2)$.
This order has the following properties:
\begin{enumerate}
\item if $|S| = |T|$, then $S < T$ if and only if $S << T$;
\item if $X, Y$ are infinite strings and $S << T$, then $[0; SX] < [0; TY]$;
\item if $A\leq B$ and $B<<C$, then $A<<C$. 
\end{enumerate}

\subsection{Fractal sets defined by continued fractions.}
%If $S$ is a finite string of positive integers we will denote by $q(S)$ the denominator 
%of the rational number whose c.f. expansion is $S$, i.e. $\frac{p(S)}{q(S)} = [0; S]$ with $(p(S), q(S)) = 1$, $q(S) > 0$. 

We can define an action of the semigroup of finite strings
(with the operation of concatenation) on the unit interval. Indeed,
for each $S$, we denote by $S \cdot x$ the number obtained by
appending the string $S$ at the beginning of the continued fraction
expansion of $x$; by convention the empty string corresponds to the identity.  

We shall also use the notation $f_S(x) := S \cdot
x$; let us point out that the Gauss map $G(x) := \left\{\frac{1}{x} \right\}$ %(which coincides with $T_1$) 
acts as a shift on continued fraction expansions, hence $f_S$ is a right inverse of $G^{|S|}$ ($G^{|S|}\circ
f_S(x)=x$).
It is easy to check that concatenation of
strings corresponds to composition  $ (ST)\cdot x = S \cdot(T\cdot x)$; moreover, the map $f_S$ is increasing
if $|S|$ is even, decreasing if it is odd.
%Each string $S$ also identifies an interval $J_S:=S\cdot [0,1]$; %si usa da qualche parte?%
%in fact the string action is just the inverse branch of the Gauss map on the cylinder $J_S$: $G(S\cdot x)=x$ for all $x\in [0,1]$. 
%The image of $f_S$ is a \emph{cylinder set} 
%$$I(S) := \{ x = S \cdot y,\ y \in [0, 1] \}$$ which is a closed interval with endpoints
%$[0; a_1, \dots, a_n]$ and $[0; a_1, \dots, a_n +1]$. 
It is not hard to see that $f_S$ is given by the formula
\begin{equation}\label{eq:fractrans}
f_S(x)=\frac{p_{n-1}x+p_n}{q_{n-1}x+q_n}
\end{equation}
where $\frac{p_n}{q_n} = [0; a_1, \dots, a_n]$ and $\frac{p_{n-1}}{q_{n-1}} = [0; a_1, \dots, a_{n-1}]$.
The map $f_S$ is a contraction of the unit interval: indeed, by taking the derivative 
in the previous formula and using the relation $q_n p_{n-1} - p_n q_{n-1}=(-1)^n$ (see \cite{IK}), 
$f_S'(x) = \frac{(-1)^n}{(q_{n-1}x + q_n)^2}$, hence

\begin{equation} \label{contraction}
\frac{1}{4 q(S)^2} \leq |f'_S(x)| \leq \frac{1}{q(S)^2} \qquad \qquad \forall x \in [0,1]
\end{equation}
where $q(S) = q_n$ is the denominator of the rational number whose c.f. expansion is $S$. 
%($[0; S] = \frac{p(S)}{q(S)}$, with $(p(S), q(S)) = 1, q(S) > 0$).

%, and it is easy to see that 
%hence the length of $I(S)$ is bounded by   %%cc controllare se serve parlare di cilindri!!!!
%\begin{equation} \label{cylsize}
%\frac{1}{2 q(S)^2} \leq |I(S)| \leq \frac{1}{q(S)^2}
%\end{equation}

%%cc def di dim di hausdorff ?? 
%In general, the computation of the Hausdorff dimension of a compact subset of the interval is a non-trivial task: 
%however, there is a family of sets whose dimension we can be estimated easily, namely sets of numbers 
%whose continued fraction expansion is given by concatenations of words of some finite alphabet:
A common way of defining Cantor sets via continued fraction expansions
is the following:
\begin{definition} \label{RCS}
Given a finite set $\mathcal{A}$ of finite strings of positive integers, the \emph{regular Cantor set} defined by $\mathcal{A}$ is the set
$$K(\mathcal{A}) := \{ x = [0; W_1, W_2, \dots ] \ : \ W_i \in \mathcal{A} \ \forall i \geq 1\}$$
\end{definition}
%CCC 
For instance, the case when the alphabet $\mathcal{A}$ consists of
strings with a single digit gives rise to sets of continued fractions with {\em restricted digits} \cite{Hen}. 

An important geometric invariant associated to a fractal subset $K$ of the real line is its {\em Hausdorff dimension}
$\textup{H.dim }K$. In particular, a regular Cantor set is generated by an {\em iterated function system}, and its dimension 
can be estimated in a standard way (for basic properties about Hausdorff dimension we refer to Falconer's book \cite{F}, in particular Chapter 9). 
%then then a regular Cantor
%set is generated by an {\em iterated function system} and its
%Hausdorff dimension $\textup{H.dim }K(\mathcal{A})$ can be estimated in a standard way (for issues
%regarding Hausdorff dimension we refer to the book of Falconer
%\cite{F}, in particular Chapter 9). 

Indeed, if the alphabet $\mathcal{A} = \{S_1, \dots, S_k\}$ is not redundant (in the sense that 
no $S_i$ is prefix of any $S_j$ with $i \neq j$), the dimension of $K(\mathcal{A})$ is bounded in 
terms of the smallest and largest contraction factors of the maps $f_W$ (\cite{F}, Proposition 9.6):
\begin{equation} \label{dimbounds}
\frac{\log N}{- \log m_1} \leq \textup{H.dim } K(\mathcal{A}) \leq \frac{\log N}{-\log m_2}
\end{equation}
where $m_1 := \inf_{\stackrel{W \in
  \mathcal{A}}{x \in [0, 1]}}  |f_W'(x)|$, $m_2 := \sup_{\stackrel{W \in
  \mathcal{A}}{x \in [0, 1]}} |f_W'(x)|$, and 
$N$ is the cardinality of $\mathcal{A}$.

\section{Matching intervals} \label{match}

Let us now briefly recall the main construction of \cite{CT}, which will be essential in the following.

Each irrational number has a unique infinite continued fraction
expansions, while every rational number has exactly two finite
expansions.  In this way, one can associate to every rational $r \in
\mathbb{Q} \cap (0,1)$ two finite strings of positive integers: let
$S_0$ be the string of even length, and $S_1$ be the one of odd
length.  For instance, since $3/10=[0;3,3]=[0;3,2,1]$, the two strings
associated to $3/10$ will be $S_0=(3,3)$ and $S_1=(3,2,1)$.  Let us
remark that, if $r=[0;S_0]=[0;S_1]$ then $S_0>>S_1$.

Now, for each $r \in \mathbb{Q} \cap (0,1)$ we define the \emph{quadratic interval} associated
to $r$ as the open interval 
$$I_r:=(\alpha_1,\alpha_0)$$
whose endpoints are the two quadratic irrationals $\alpha_0=[0;\overline{S_0}]$ and
$\alpha_1=[0;\overline{S_1}]$. 
 It is easy to check that $r$ always belongs to $I_r$, and it is the unique element of $\mathbb{Q} \cap I_r$
which is a convergent of both endpoints of $I_r$. In fact $r$ is the rational with minimal denominator in
$I_r$, and it will be called the \emph{pseudocenter} of $I_r$.
Let us define the \emph{bifurcation set} (or \emph{exceptional set} in the terminology of \cite{CT}) as

\begin{equation} \label{Edef}
\mathcal{E} := [0,g]\setminus \bigcup_{r \in (0, 1)\cap \mathbb{Q}} I_r
\end{equation}

The intervals $I_r$ will often overlap; however, by (\cite{CT}, Prop. 2.4 and Lemma 2.6), 
the connected components of $[0,g]\setminus \EE$ are themselves quadratic intervals, called \emph{maximal quadratic intervals}. 
That is to say, every quadratic interval is contained in a unique maximal quadratic interval, and two distinct 
maximal quadratic intervals do not intersect. This way, the set of pseudocenters 
of maximal quadratic intervals is a canonically defined subset of $\mathbb{Q} \cap (0,1)$ and will be 
denoted by
$$\QQ_E := \{ r \in (0,1) \ : \ I_r \textup{ is maximal} \}$$
%cc
We shall sometimes refer to $\QQ_E$ as the set of {\em extremal rational values}; this is motivated by the  following  characterization of $\QQ_E$:
\begin{proposition}[\cite{CT}, Proposition 4.5] \label{stringlemma}
A rational number $r = [0; S]$ belongs to $\QQ_E$ if and only if, for any splitting $S=AB$ of $S$ into two 
strings $A$, $B$ of positive length, either
$$AB < BA$$ 
or $A = B$ with $|A|$ odd.
\end{proposition}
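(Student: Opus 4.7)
The plan is to translate the geometric condition ``$I_r$ is maximal'' into a combinatorial comparison between the periodic quadratic irrationals defining $I_r$ and those defining neighbouring intervals $I_{r'}$, and then use property (2) of the $<<$ order --- which says precisely that the string order matches the order on real numbers under $[0;\cdot\,]$ --- to recast these real inequalities as string inequalities. In this framework, $I_r$ fails to be maximal exactly when one can exhibit a competing rational $r'$ whose periodic expansion is close to that of $r$ in a way that forces a string-order violation at some proper splitting of $S$.

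For the forward direction I would argue by contrapositive. Suppose $S=AB$ is a non-trivial splitting with $BA \leq AB$, excluding the case $A=B$ with $|A|$ odd. The candidate competitor is the rational whose expansion is the cyclic shift $BA$ (or, equivalently, a rational of the form $[0;A]$, with parity adjusted). Using property (2), the inequality $BA \leq AB$ translates to $[0;\overline{BA}] \leq [0;\overline{AB}] = \alpha_0$, and combined with a symmetric bound on the other endpoint this yields $I_r \subseteq I_{r'}$. The strictness of the containment --- hence the failure of maximality --- holds unless $\overline{AB}$ and $\overline{BA}$ represent the same periodic word, which happens precisely when $A$ and $B$ are powers of a common string. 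The extra hypothesis $A=B$ with $|A|$ odd isolates exactly the single periodic configuration where cyclic rotation produces no genuinely new quadratic irrational, and in that case $r$ is still the pseudocenter of a maximal interval.

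For the reverse direction, assume the string hypothesis for every splitting, and suppose for contradiction that $I_r \subsetneq I_{r'}$ for some $r' \neq r$. Since $r$ is the pseudocenter of $I_r$ (i.e.\ the rational of minimal denominator in $I_r$) and $r \in I_r \subset I_{r'}$, the shorter expansion $T$ of $r'$ has length strictly less than $|S|$; a convergent analysis applied to $r \in I_{r'}$ shows that $T$ must be a prefix of one of the two expansions of $r$. Writing $S=TU$ and unpacking the endpoint inclusions $\alpha_0(r) \leq \alpha_0(r')$, $\alpha_1(r') \leq \alpha_1(r)$ via property (2), I would derive $UT \leq TU$, contradicting the hypothesis applied to the splitting $A=T$, $B=U$.

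The principal technical obstacle in both directions is the parity bookkeeping: the order $<$ flips direction according to the parity of the position of first mismatch, so each real-number inequality unpacks into one of two string inequalities depending on parities of prefixes. I would control this systematically by consistently using the fractional-linear representation of $f_S$ from equation \eqref{eq:fractrans}, tracking whether $f_S$ is increasing or decreasing via the parity of $|S|$, and carefully matching the two expansions $S_0$, $S_1$ of $r$ with the correct choices of $A$ and $B$. The exceptional case $A=B$ with $|A|$ odd, which corresponds to pseudocenters like $r=1/2=[0;1,1]$, must be isolated and verified by hand, since it is the unique obstruction to a uniform cyclic-rotation argument.
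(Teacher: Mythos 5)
First, a caveat on the comparison itself: the paper does not prove this proposition --- it is quoted verbatim from \cite{CT} (Proposition 4.5) --- so there is no internal proof to measure your attempt against, and I can only assess the proposal on its own terms. Your overall mechanism is the right one: maximality of $I_r$ should fail exactly when some cyclic rotation $BA$ of $S=AB$ is too small, and property (2) of the order $<<$ is the correct bridge between string inequalities and inequalities between the periodic irrationals $[0;\overline{AB}]$, $[0;\overline{BA}]$. You also correctly identify that the clause ``$A=B$ with $|A|$ odd'' must be treated separately.

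The gaps sit exactly where the content is. In the forward direction, the step ``combined with a symmetric bound on the other endpoint this yields $I_r \subseteq I_{r'}$'' is asserted, not proved, and it is the crux: the two candidate competitors $[0;BA]$ and $[0;A]$ are different rationals generating different quadratic intervals, and neither has $[0;\overline{BA}]$ as an endpoint --- the endpoints of $I_{r'}$ are built from the \emph{two} continued fraction expansions of $r'$ --- so the inequality $[0;\overline{BA}]\le[0;\overline{AB}]$ does not by itself place $\alpha_0(r)$ inside $I_{r'}$, and nothing has been said about the left endpoints at all. (A cleaner route, consistent with the machinery quoted in this paper, is to note that $[0;\overline{BA}]=G^{|A|}(\alpha_0)$, so failure of $AB<BA$ says $G^{|A|}(\alpha_0)\le\alpha_0$, i.e.\ $\alpha_0\notin\EE$ away from the equality case; but then one must check that the characterization $\EE=\{x:\ G^k(x)\ge x\ \forall k\}$ is not itself derived from the present proposition.) In the reverse direction, the claim that $r\in I_{r'}$ forces the short expansion $T$ of $r'$ to be a prefix of one of the two expansions of $r$ is a genuine lemma (essentially that $I_{r'}$ is contained in the union of the two cylinders of $r'$), and even granting it, if $T$ is a prefix of the expansion of $r$ \emph{other} than $S$, your hypothesis --- which concerns splittings of $S$ only --- does not directly yield the contradiction; you need to transfer the condition between $S_0$ and $S_1$. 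Finally, the degenerate case $AB=BA$ (equivalently, $A$ and $B$ powers of a common word) is only gestured at: one must actually exhibit a strictly larger interval when $S=AA$ with $|A|$ even or $S=C^m$ with $m\ge3$ (e.g.\ $I_{[0;1,1,1,1]}\subsetneq I_{1/2}$), and verify maximality by hand when $A=B$ with $|A|$ odd (e.g.\ $[0;3,3]$). As it stands the proposal is a plausible roadmap rather than a proof.
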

Using this criterion, for instance, one can check that $[0; 3, 2]$ belongs to $\mathbb{Q}_E$
(because $(3, 2) < (2, 3)$), and so does $[0; 3, 3]$,  
while $[0; 2, 2, 1, 1]$ does not (indeed, $(2, 1, 1, 2) < (2, 2, 1, 1)$).
Related to the criterion is the following characterization of $\mathcal{E}$ in terms of orbits of the Gauss map $G$:
 
\begin{proposition}[\cite{BCIT}, Lemma 3.3] 
$$\EE= \{ x\in[0,1] \ : G^k(x) \geq x \ \  \forall k \in \mathbb{N} \}.$$
\end{proposition}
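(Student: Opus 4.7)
The plan is to prove both inclusions by contrapositive, producing in each case an explicit witness built directly from the continued fraction expansion of $x$.

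For the inclusion $\EE \subseteq \{x : G^k(x) \geq x \ \forall k\}$, I would assume $G^k(x) < x$ for some $k \geq 1$, write $x = [0; a_1, a_2, \ldots]$, and exhibit a rational $r$ with $x \in I_r$. The natural candidate is $r := [0; a_1, \ldots, a_k]$, whose two expansions $S_0$ (even length) and $S_1$ (odd length) give rise to the endpoints $\alpha_i = [0; \overline{S_i}]$ of $I_r$. Setting $S := (a_1, \ldots, a_k)$ and $Y := (a_{k+1}, a_{k+2}, \ldots)$, one has $x = f_S([0; Y])$, and according to the parity of $k$, the string $S$ coincides with either $S_0$ or $S_1$; the map $f_S$ is then a contraction (increasing if $|S|$ is even, decreasing if odd) whose fixed point is the corresponding endpoint of $I_r$. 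The hypothesis $[0;Y] < x = f_S([0;Y])$ forces $[0;Y]$ onto the correct side of that fixed point, and the contraction property then yields one of the bounds $x<\alpha_0$ or $x>\alpha_1$. The remaining bound comes for free from the cylinder geometry: $f_S([0,1])$ is an interval with $r$ as one endpoint and therefore lies entirely on the appropriate side of the opposite endpoint of $I_r$. Combining gives $x \in I_r$, contradicting $x \in \EE$.

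For the reverse inclusion, assume $x \notin \EE$. If $x > g$ then, using $g^2 + g = 1$, we get $x^2 + x > 1$, so $G(x) = \frac{1}{x} - 1 < x$ and we are done. Otherwise $x$ lies in some maximal quadratic interval $I_r = (\alpha_1, \alpha_0)$ with pseudocenter $r$. When $x \geq r$ the first $|S_0|$ digits of $x$ are $S_0$, so $x = f_{S_0}([0; Y])$; since $f_{S_0}$ is an increasing contraction fixing $\alpha_0$ and $x < \alpha_0$, we get $[0; Y] < x$, i.e.\ $G^{|S_0|}(x) < x$. When $x < r$, a symmetric argument with the decreasing contraction $f_{S_1}$, fixed point $\alpha_1$, and the inequality $x > \alpha_1$ gives $G^{|S_1|}(x) < x$. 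In either case we have the required iterate.

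The main obstacle I expect is the parity bookkeeping. The total order on strings of Section 2.1 alternates direction by position, and $f_S$ reverses orientation whenever $|S|$ is odd; one must therefore correctly match the parity of the prefix length with the choice of $S_0$ versus $S_1$, and identify on which side of the appropriate fixed point the quantity $[0;Y]$ falls. Once this is tracked carefully, the argument is essentially a dynamical counterpart of the combinatorial criterion in Proposition \ref{stringlemma}: the statement $G^k(x)\geq x$ for all $k$ plays, for a general $x$, the role that the cyclic inequality $AB \leq BA$ plays for the periodic orbit $[0;\overline{S}]$.
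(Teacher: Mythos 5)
The paper does not actually prove this proposition: it is imported wholesale from \cite{BCIT} (Lemma 3.3 there), so there is no in-text argument to compare yours against. Judged on its own, your proof is correct and is essentially the standard argument for statements of this type: identifying the cylinder maps $f_{S_0}$, $f_{S_1}$ as an increasing/decreasing contraction with fixed point $\alpha_0$, $\alpha_1$ respectively, and translating $G^k(x)<x$ into ``$y$ lies on the expanding side of the fixed point of $f_S$'' via the elementary fact that for a contraction $f$ (of either orientation) one has $f(y)>y$ iff $y$ is below the fixed point. Both inclusions then go through exactly as you describe, and the cylinder bound $f_{S_0}([0,1])=[r,f_{S_0}(1)]\supseteq[r,\alpha_0)$ (resp.\ $f_{S_1}([0,1])=[f_{S_1}(1),r]\supseteq(\alpha_1,r]$) supplies the missing half of the estimate $x\in I_r$. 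The only loose end is the degenerate prefix $(a_1,\dots,a_k)=(1)$ in the forward direction, for which $r=[0;1]=1$ is not an admissible pseudocenter; but in that case $G(x)<x$ forces $x>g$, so $x\notin\EE$ by the definition \eqref{Edef} alone --- the same computation you already carry out for the reverse inclusion. You should also dispose of rational $x$ separately (eventually $G^k(x)=0<x$, and $x\in I_x$), since your witness construction tacitly assumes an infinite expansion. Neither point is a real gap.
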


For $t\in (0,1)$ fixed, let us also define the closed set
$$\BB(t) := \{ x\in[0,1] \ : G^k(x) \geq t \ \ \forall k \in \mathbb{N} \}.$$ 
To get a rough idea of the meaning of the sets $\BB(t)$ let us mention
that for $t=1/(N+1)$ one gets the set of values whose continued
fraction expansion is infinite and contains only the digits
$\{1,...,N\}$ as partial quotients.
A simple relation follows from the definitions:
\begin{remark} \label{inclusion}
For each $t \in [0, 1]$, $\mathcal{E} \cap [t, 1] \subseteq \mathcal{B}(t)$.
\end{remark}
A thorough study of the sets $\mathcal{B}(t)$ and their interesting
connection with $\mathcal{E}$ is contained in \cite{CT2}. Note that,
from Remark \ref{inclusion} and ergodicity of the Gauss map, it
follows that the Lebesgue measure of $\mathcal{E}$ is zero.
%The connection between $\EE$ and the family of sets $\{\BB(t), \ t
%\in(0,1) \}$ is rather deep and interesting (see \cite{CT2}),
%neverthanless we shall not dwell on this issue here, since we shall
%need just a little bit of this information: $x\in \EE \iff x\in
%\BB(x)$ and hence $\EE \cap [x,1] \subset \BB(x)$. We shall use these facts several times in order to 
%estimate the Hausdorff dimension of segments of the set $\EE$. 

\subsection{Maximal intervals and matching.}

Let us now relate the previous construction to the dynamics of $\alpha$-continued fractions. 
The main result of \cite{CT} is that for all parameters $\alpha$ belonging to a 
maximal quadratic interval $I_r$, the orbits of $\alpha$ and $\alpha-1$ under the $\alpha$-continued 
fraction transformation $T_\alpha$ coincide after a finite number of steps, and this number of steps depends only on the 
usual continued fraction expansion of the pseudocenter $r$:

%For all parameters belonging to a maximal quadratic interval, the
%$\alpha$-continued fraction transformation $T_\alpha$ satisfies a
%\emph{matching condition} between the orbits of the endpoints, with
%fixed combinatorics:

\begin{theorem}[\cite{CT}, Thm 3.1] \label{matching}
Let $I_r$ be a maximal quadratic interval, and $r = [0; a_1, \dots, a_n]$ with $n$ even. Let 
\begin{equation}\label{eq:index}
N = \sum_{i \textup{ even}} a_i \qquad M = \sum_{i \textup{ odd}} a_i
\end{equation}
Then for all $\alpha \in I_r$, 
\begin{equation} \label{eq:matching}
T_\alpha^{N+1}(\alpha) = T_\alpha^{M+1}(\alpha-1) %\qquad \forall \alpha \in I_r
\end{equation}
\end{theorem}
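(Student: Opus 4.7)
My plan is to establish the matching identity by analyzing the symbolic dynamics of $T_\alpha$ on the orbits of the two endpoints of its domain. Since the symbolic itineraries of $\alpha$ and $\alpha-1$ under $T_\alpha$ up to the collision moment are locally constant as $\alpha$ varies within $I_r$ (this is essentially what it means for $I_r$ to be a matching interval), it suffices to check \eqref{eq:matching} at a single conveniently chosen parameter; the natural choice is $\alpha = r$ itself, where both endpoints are rational and the orbits terminate at $0$ after finitely many steps, making a direct verification possible.

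The combinatorial heart of the argument is the following claim: for each $\alpha \in I_r$, the forward orbit $\alpha, T_\alpha(\alpha), T_\alpha^2(\alpha), \dots$ ``processes'' the finite string $(a_1, \dots, a_n)$ encoding $r$, with the $i$-th partial quotient $a_i$ contributing exactly $a_i$ iterations—but only the even-indexed contributions accumulate in the $\alpha$-orbit (giving $N$ iterations, plus one final collision step), while only the odd-indexed contributions accumulate in the $(\alpha-1)$-orbit (giving $M+1$ iterations). The asymmetry comes from the fact that on positive inputs $T_\alpha$ agrees with the Gauss map $G$ provided its image does not exceed $\alpha$, whereas as soon as $G$ would land above $\alpha$ the map $T_\alpha$ incorporates an extra $-1$ that flips the sign of the orbit. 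Each such sign flip initiates a run of iterations on the opposite branch that precisely consumes one more regular partial quotient of $r$, and the total count is organized exactly as in \eqref{eq:index}.

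To show that the two orbits actually \emph{coincide} (rather than merely terminating at matching step counts), I would use the Möbius representation \eqref{eq:fractrans}: both $T_\alpha^{N+1}(\alpha)$ and $T_\alpha^{M+1}(\alpha-1)$ can be written as fractional linear images of $\alpha$ and $\alpha-1$ respectively, built from the convergents $p_k/q_k$ of $r$, and the desired equality reduces to a matrix identity relating the two continued fraction expansions of $r$ of lengths $n$ and $n+1$. This identity is ultimately a consequence of the determinant relation $q_n p_{n-1} - p_n q_{n-1} = (-1)^n$ together with the fact that $[0;S_0] = [0;S_1] = r$.

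The main obstacle is the combinatorial bookkeeping in the second step: one must verify that the conjectured symbolic itinerary is indeed realized for \emph{every} $\alpha \in I_r$, not just at the pseudocenter, and that no premature collision or reorganization of the coding occurs inside $I_r$. This is precisely where the maximality of $I_r$ (as opposed to it being an arbitrary quadratic interval) plays its role: via the characterization of $\mathbb{Q}_E$ in Proposition~\ref{stringlemma}, the condition $AB < BA$ for every nontrivial splitting $S = AB$ is exactly what rules out the sign-flipping dynamics ``overtaking itself'' before step $N+1$, thereby pinning down the sharp step counts and enforcing the collision at the stated time.
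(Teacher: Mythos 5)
First, a point of order: the paper does not prove Theorem \ref{matching} at all --- it is imported verbatim from \cite{CT} (Thm 3.1), so there is no internal proof to compare your sketch against. Judged on its own merits, your proposal identifies some of the right ingredients (the two expansions $S_0, S_1$ of $r$, the characterization of $\QQ_E$ in Proposition \ref{stringlemma}, the M\"obius-transformation bookkeeping via \eqref{eq:fractrans} and the determinant relation), but it contains a genuine gap at its foundation. You take as given that ``the symbolic itineraries of $\alpha$ and $\alpha-1$ under $T_\alpha$ up to the collision moment are locally constant as $\alpha$ varies within $I_r$,'' describing this as ``essentially what it means for $I_r$ to be a matching interval.'' That is circular: $I_r$ is defined purely combinatorially as the interval $([0;\overline{S_1}],[0;\overline{S_0}])$, and the constancy of symbolic orbits over $I_r$ is itself a theorem (\cite{CT}, Lemma 3.7) whose proof carries essentially the same difficulty as the matching statement. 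You cannot assume it in order to reduce to a single parameter.

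The reduction you propose is also problematic in itself. At $\alpha = r$ both $r$ and $r-1$ are rational, so both $T_r$-orbits terminate at $0$ after finitely many steps; the identity $T_r^{N+1}(r) = T_r^{M+1}(r-1)$ then degenerates to $0=0$ and holds for \emph{all} sufficiently large exponents, so it cannot certify the specific step counts $N+1$ and $M+1$, nor can it propagate to irrational $\alpha \in I_r$ where the orbits are infinite and the collision is at a nonzero value. The actual content of the theorem is the verification, for every $\alpha\in I_r$, that the signed digit sequences of $\alpha$ and $\alpha-1$ under $T_\alpha$ are exactly the ones dictated by $S_1$ and $S_0$ up to steps $N$ and $M$ respectively --- and this is where the condition $AB<BA$ from Proposition \ref{stringlemma} must be deployed digit by digit. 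Your sketch gestures at this (``the sign-flipping dynamics overtaking itself'') but does not carry it out, and the part you do make precise rests on the degenerate check at the pseudocenter. As written, the argument would not go through.
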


Equation \eqref{eq:matching} is called {\em matching condition}. Notice that
$N$ and $M$ are the same for all $\alpha$ which belong to the open interval $I_r$. 
Indeed, even more is true, namely the symbolic orbits of $\alpha$ and $\alpha-1$ up to steps 
respectively
$N$ and $M$ are constant 
over all the interval $I_r$ (\cite{CT}, Lemma 3.7). 
%%cc 
%this is what we referred to as ``fixed combinatorics'' in the
%introduction.
%%
Thus we can regard each maximal quadratic
interval as a stability domain for the family of $\alpha$-continued
fraction transformations, and the complement $\mathcal{E}$ as the
bifurcation locus.
%%cc

One remarkable phenomenon, 
which was first discovered by Nakada and Natsui (\cite{NN}, Thm. 2), 
is that the matching condition locally determines the monotonic behaviour 
of $h(\alpha)$: %This means that, for all $r\in\QQ_E$,
 %the continued fraction expansion of the pseudocenter $r$ determines 
%the monotonic behaviour 
%of the entropy as a function of $\alpha$ on the maximal interval $I_r$.
%%

%Moreover, the matching condition, which  
%has a very important consequence on the entropy $h(\alpha)$, 
%namely it 
%determines the monotonic behaviour of $h(\alpha)$ for $\alpha \in I_r$. The following phenomenon
%was first discovered by Nakada and Natsui  (\cite{NN}, thm. 2):

\begin{proposition}[\cite{CT}, Proposition 3.8] \label{Nakada}
Let $I_r$ be a maximal quadratic interval, and let $N,M$ be as in
Theorem \ref{matching}. Then:
\begin{enumerate}
\item if $N < M$, the entropy $h(\alpha)$ is increasing for $\alpha \in I_r$; 
\item if $N = M$ it is constant on $I_r$;
\item if $N > M$ it is decreasing on $I_r$.
\end{enumerate}
\end{proposition}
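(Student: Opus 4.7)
The plan is to reduce the monotonicity of $h$ to the monotonicity of the measure of the natural extension domain, and then exploit the matching condition to write that measure as an explicit finite expression in $\alpha$.

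Concretely, by Nakada's formula there is a planar domain $\Omega_\alpha \subset \mathbb{R}^2$ on which the natural extension $\widehat{T}_\alpha$ of $T_\alpha$ acts as a measure-preserving bijection with respect to the measure $d\mu = \frac{dx\,dy}{(1+xy)^2}$, and one has
$$h(\alpha) \cdot \mu(\Omega_\alpha) = \frac{\pi^2}{6}.$$
Thus the sign of $\frac{d}{d\alpha} h(\alpha)$ is opposite to the sign of $\frac{d}{d\alpha} \mu(\Omega_\alpha)$, and it suffices to prove that on $I_r$ the function $\alpha \mapsto \mu(\Omega_\alpha)$ is strictly decreasing if $N<M$, constant if $N=M$, and strictly increasing if $N>M$.

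The key input is that on the whole interval $I_r$ the symbolic orbits of $\alpha$ and $\alpha-1$ up to the matching steps are constant (by \cite{CT}, Lemma 3.7). From this one builds $\Omega_\alpha$ as a disjoint union of finitely many horizontal rectangles $[T_\alpha^{k+1}(\alpha), T_\alpha^{k}(\alpha)]\times J_k$ for $k=0,\dots, N$ together with finitely many rectangles $[T_\alpha^{k}(\alpha-1),T_\alpha^{k+1}(\alpha-1)] \times J'_k$ for $k=0,\dots,M$, where the vertical fibres $J_k, J'_k$ are determined by the symbolic coding (and hence are locally constant in $\alpha$). The matching condition \eqref{eq:matching} then ensures that the rectangles beyond the collision are identified and cancel pairwise in the computation of $\mu(\Omega_\alpha)$.

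Differentiating in $\alpha$ and using the fact that $T_\alpha^{k+1}$ on the orbits in question is given by a Möbius transformation $f_{S_k}$ with derivative $(-1)^{|S_k|}/(q_k \alpha + q_k')^2$ (the $S_k$ being the constant prefixes coming from Lemma 3.7 of \cite{CT}), one computes
$$\frac{d}{d\alpha} \mu(\Omega_\alpha) = \sum_{k=0}^{N} A_k - \sum_{k=0}^{M} B_k,$$
where each $A_k, B_k$ is an explicit positive quantity involving the $\mu$-measure of $J_k$ (resp.\ $J_k'$) times a derivative factor. A direct manipulation, again using matching to identify the tail rectangles, shows that after this cancellation exactly $N$ positive contributions are paired against $M$ positive contributions with the same overall sign, so the residual is proportional to $M-N$ up to a strictly positive factor. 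This gives the three cases of the proposition.

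The main obstacle is the bookkeeping of the rectangles making up $\Omega_\alpha$: one must show that the combinatorial decomposition is genuinely stable on $I_r$, that the vertical fibres do not themselves depend on $\alpha$ in a way that spoils the cancellation, and that the pairwise cancellation of tail contributions following from matching is complete, leaving precisely the signed count of $N$ vs $M$ first-return contributions. Once this is set up correctly, the monotonicity statement follows from a signed count rather than from any delicate estimate.
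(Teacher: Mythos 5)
This proposition is not proved in the paper at all: it is imported verbatim from \cite{CT}, Proposition 3.8, whose content in turn rests on Nakada--Natsui (\cite{NN}, Theorem 2). So the relevant comparison is with the argument in those references, and your strategy --- pass to the natural extension domain $\Omega_\alpha$, show that $\mu(\Omega_\alpha)$ moves monotonically on $I_r$ by tracking how the domain changes under a perturbation of $\alpha$, and convert this into monotonicity of $h$ --- is indeed the strategy used there. Two caveats on the reduction step, though. The identity $h(\alpha)\,\mu(\Omega_\alpha)=\pi^2/6$ on all of $(0,1]$ is itself a nontrivial theorem (it is the main result of \cite{KSS}, proved after \cite{NN}); you may invoke it, but you must cite it as such, since it presupposes that the candidate domain really is the natural extension with an ergodic invariant measure. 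The original argument avoids this by using Rohlin's formula, at the cost of also having to control the numerator $\int_{\Omega_\alpha}\log|T_\alpha'|\,d\mu$, which your sketch never mentions.

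The genuine gap is in the final step. You write that after cancellation one is left with $N$ positive contributions $A_k$ set against $M$ positive contributions $B_k$, ``so the residual is proportional to $M-N$ up to a strictly positive factor.'' A difference of a sum of $N$ positive numbers and a sum of $M$ positive numbers has no determined sign just from comparing $N$ with $M$: you need all the $A_k$ and all the $B_k$ to have \emph{equal} magnitude. That equality is exactly where the argument of \cite{NN} uses the $\mu$-invariance of $\widehat{T}_\alpha$: the pieces gained and lost by $\Omega_\alpha$ under a small change of the parameter are the successive images $\widehat{T}_\alpha^{\,k}(R)$ of one fixed base rectangle $R$ along the orbits of $\alpha$ and $\alpha-1$ up to the matching time, and invariance of $d\mu=\frac{dx\,dy}{(1+xy)^2}$ forces each of them to have measure $\mu(R)$, whence $\mu(\Omega_{\alpha'})-\mu(\Omega_\alpha)=\pm(M-N)\mu(R)$ exactly. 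You state at the outset that $\widehat{T}_\alpha$ is measure preserving, but you never deploy this fact at the decisive moment; instead you differentiate in $\alpha$ and assert that the derivative factors ``pair up,'' which is precisely what needs proof. Recasting the comparison as a set-theoretic difference of domains pushed around by the measure-preserving map, rather than as a term-by-term differentiation, closes the gap and also disposes of your worry about the fibres $J_k$, $J_k'$ depending on $\alpha$.
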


%the difference $M-N$ % corretto 
%will be called the {\em matching index} of the maximal interval $I_r$ and will be denoted by  the symbol $\llbracket r \rrbracket $.
%
% We  shall therefore 
% speak of \emph{positive}, \emph{neutral} or
%\emph{negative} intervals, accordingly to the sign of the matching index.  

\section{Tuning}\label{sec:TW}

Let us now define \emph{tuning operators} acting on parameter space, inspired by the dictionary with complex dynamics (see the Appendix).
We will then see how such operators are responsible for the self-similar structure of the entropy.

\subsection{Tuning windows}

Let $r\in
\QQ_E$ be the pseudocenter of the maximal interval $I_r=(\alpha_1,
\alpha_0)$; if $r=[0;S_0]=[0;S_1]$ are the even and odd expansions of
$r$, then $\alpha_i=[0;\overline{S_i}]$ ($i=0,1$). Let us also set
$\omega:=[0;S_1\overline{S_0}]$ and define the tuning window generated
by $r$ as the interval%\footnote{ Note that $r$ is the pseudocenter of $W_r$.}
$$W_r:= [\omega, \alpha_0).$$ 
The value $\alpha_0$ will be called the \emph{root} of the tuning window.
For instance, if $r = \frac{1}{2} = [0; 2] = [0; 1, 1]$, then $\omega
= [0; 2, \overline{1}] = g^2$ and the root $\alpha_0 = [0; \overline{1}] = g$.
%hence $W_{1/2} = [g^2, g)$ is a neutral tuning window.

The following proposition describes in more detail the structure 
of the tuning windows: a value $x$ belongs to $B(\omega) \cap [\omega, \alpha_0]$ if and only if its continued fraction
is an infinite concatenation of the strings $S_0$, $S_1$.

%We now state a couple of results that will be very useful in what follows.
\begin{proposition} \label{concat}
Let $r \in \QQ_E$, and let $W_r = [\omega, \alpha_0)$. Then 
$$\BB(\omega) \cap [\omega, \alpha_0] = K(\Sigma)$$
where $K(\Sigma)$ is the regular Cantor set on the alphabet $\Sigma = \{ S_0, S_1 \}$.
\end{proposition}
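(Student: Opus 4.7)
Let $E := \BB(\omega) \cap [\omega,\alpha_0]$. The strategy is to verify that $E$ satisfies the same iterated function system (IFS) fixed-point equation as $K(\Sigma)$, namely $E = f_{S_0}(E) \cup f_{S_1}(E)$, and then invoke uniqueness of the attractor of the contractive IFS $\{f_{S_0},f_{S_1}\}$ on $[\omega,\alpha_0]$. Non-emptiness of $E$ will follow from $\alpha_0 \in E$ (using $\alpha_0 \in \EE$, which yields $G^k(\alpha_0) \geq \alpha_0 > \omega$ for all $k$ by the usual argument from Proposition \ref{stringlemma} applied to $S_0 = AB$), and compactness is immediate since $\BB(\omega)$ and $[\omega,\alpha_0]$ are both closed.

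The forward invariance $f_{S_i}(E) \subseteq E$ is the substantive step. For $y \in E$, shifts of $f_{S_i}(y)$ by at least $|S_i|$ steps become shifts of $y$ and are thus $\geq \omega$ by hypothesis; the remaining shifts take the form $f_B(y)$ with $B$ a nonempty proper suffix of $S_i$, so everything reduces to the key technical lemma $f_B(y) \geq \omega$ for every $y \in [\omega,\alpha_0]$. Writing the splitting $S_i = AB$, Proposition \ref{stringlemma} supplies $AB < BA$ (the exceptional case $A = B$ with $|A|$ odd is handled by a direct calculation), and passing to purely periodic continued fractions this becomes $\alpha_i = [0;\overline{AB}] < [0;\overline{BA}] = f_B(\alpha_i)$. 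Since the unique fixed point of $f_B$ is $x^{\ast} = [0;\overline{B}]$ and $f_B$ is monotone (increasing if $|B|$ is even, decreasing if odd), the inequality $f_B(\alpha_i) > \alpha_i$ is equivalent in either case to $\alpha_i < x^{\ast}$, producing $\omega < \alpha_i < x^{\ast}$. A short case analysis on parity then gives $f_B(y) \geq \omega$: if $|B|$ is even then $f_B(y) \geq f_B(\omega) > \omega$ (as $\omega < x^{\ast}$); if $|B|$ is odd and $B$ is a suffix of $S_0$, then $f_B(y) \geq f_B(\alpha_0) > \alpha_0 > \omega$; the remaining sub-case ($|B|$ odd, $B$ a suffix of $S_1$) reduces, via the factorisation $f_{S_1} = f_A \circ f_B$, to the comparison $f_B(\alpha_0) \geq [0;\overline{A}]$, which I establish by locating the first position where the infinite c.f.\ words $\overline{A}$ and $B\,\overline{S_0}$ differ and applying Proposition \ref{stringlemma} once more to the splitting $S_1 = AB$.

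The reverse invariance $E \subseteq f_{S_0}(E) \cup f_{S_1}(E)$ is more elementary. Using $|S_0|$ even and $|S_1|$ odd, one computes $f_{S_0}([\omega,\alpha_0]) = [f_{S_0}(\omega),\alpha_0]$ and $f_{S_1}([\omega,\alpha_0]) = [\omega, f_{S_1}(\omega)]$, with $f_{S_1}(\omega) < f_{S_0}(\omega)$ by the order relation $S_1 << S_0$ recalled in Section \ref{match}. Any $x \in E$ lies in one of these two images or in the open gap $(f_{S_1}(\omega),f_{S_0}(\omega))$; in the gap case, the first $|S_0|$ or $|S_1|$ digits of the c.f.\ of $x$ force a factorisation $x = f_{S_i}(z)$ with $z < \omega$ for exactly one $i \in \{0,1\}$, contradicting $x \in \BB(\omega)$. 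Hence $x = f_{S_i}(y)$ with $y = G^{|S_i|}(x) \in [\omega,\alpha_0] \cap \BB(\omega) = E$, and iterating this step identifies $x$ with an element $[0;S_{i_1} S_{i_2} \dots]$ of $K(\Sigma)$, completing the fixed-point equation.

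The hard part will be the last sub-case of the technical lemma, in which Proposition \ref{stringlemma} applied to $S_1$ only controls $\alpha_1 < x^{\ast}$ but not the stronger $\alpha_0 < x^{\ast}$ needed when $f_B$ is decreasing. Closing this gap requires a finer symbol-by-symbol comparison of the periodic words $\overline{A}$ and $B\,\overline{S_0}$, using the fact that the two expansions $S_0, S_1$ of $r$ share a common prefix of length $\min(|S_0|,|S_1|) - 1$ and differ only in their final one or two digits; a convenient handle on the boundary case $|A| = |S_1| - 1$, $B = (1)$ is the direct estimate $f_B(\alpha_0) = 1/(1+\alpha_0) > 1/2 > \omega$, and the remaining sub-cases can be reduced to it by invoking Proposition \ref{stringlemma} iteratively.
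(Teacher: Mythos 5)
Your overall architecture --- showing that $E:=\BB(\omega)\cap[\omega,\alpha_0]$ is a nonempty compact set satisfying $E=f_{S_0}(E)\cup f_{S_1}(E)$ and invoking uniqueness of the attractor of the IFS $\{f_{S_0},f_{S_1}\}$ --- is a legitimate variant of the paper's direct double inclusion, and your ``reverse invariance'' step is essentially the paper's own argument for $\BB(\omega)\cap[\omega,\alpha_0]\subseteq K(\Sigma)$. The problem is that the substantive content of the proposition, which the paper isolates as Lemma \ref{prosuf}, is exactly the sub-case you leave open. You correctly reduce forward invariance to the claim that $f_B(y)\geq\omega$ for every nonempty proper suffix $B$ of $S_0$ or $S_1$ and every $y\in[\omega,\alpha_0]$, and your fixed-point analysis of $f_B$ disposes of the cases ``$|B|$ even'' and ``$|B|$ odd, $B$ a suffix of $S_0$''. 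But in the remaining case ($|B|$ odd, $B$ a proper suffix of $S_1$) you only record the (correct, since $|A|$ is even) equivalence $f_B(\alpha_0)\geq\omega \Leftrightarrow f_B(\alpha_0)\geq[0;\overline{A}]$ and then state that the latter ``requires a finer symbol-by-symbol comparison'' of $\overline{A}$ with $B\,\overline{S_0}$. No such comparison is carried out, and the detour through the fixed point of $f_A$ makes it harder rather than easier, since those two words have no natural common alignment. As it stands this is a genuine gap, and it sits at the one point where the hypothesis $r\in\QQ_E$ must actually be used.

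The gap is closable, and more cheaply than you propose: compare $f_B(\alpha_0)=[0;B\,\overline{S_0}]$ directly with $\omega=[0;S_1\overline{S_0}]=[0;AB\,\overline{S_0}]$. Since $S_0$ and $S_1$ agree except in their final one or two digits, $A$ is a prefix of $S_0$ whenever $B\neq(1)$, so $[0;B\,\overline{S_0}]=[0;BA\cdots]$; Proposition \ref{stringlemma} applied to the splitting $S_1=AB$ gives $AB<BA$, hence $BA>>AB=S_1$, hence $[0;BA\cdots]>[0;S_1X]$ for any continuation $X$, in particular $f_B(\alpha_0)>\omega$; the boundary case $B=(1)$ is your direct estimate. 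This is precisely the paper's Lemma \ref{prosuf}, and once you have it you can discard the fixed-point analysis altogether: $f_B$ is monotone on $[\omega,\alpha_0]$, so $\min_{y\in[\omega,\alpha_0]}f_B(y)=\min\bigl(f_B(\omega),f_B(\alpha_0)\bigr)$, and both $f_B(\omega)=[0;B\,S_1\overline{S_0}]$ and $f_B(\alpha_0)=[0;B\,\overline{S_0}]$ are of the form $[0;B,S_*,\dots]$ covered by that lemma, which bounds them below by $r>\omega$ in all three of your cases at once. With this lemma supplied, your IFS argument goes through; without it, the proof is incomplete.
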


For instance, if $r = \frac{1}{2}$, then $W_{\frac{1}{2}} = [g^2, g)$, and $\mathcal{B}(g^2) \cap[g^2, g]$ is 
the set of numbers whose continued fraction expansion is an infinite concatenation of the 
strings $S_0 = (1,1)$ and $S_1 = (2)$.

\subsection{Tuning operators}

For each $r\in \QQ_E$ we can define the {\em tuning map} $\tau_r:[0,1]\to [0,r]$ as $\tau_r(0)=\omega$ and
\begin{equation}\label{eq:tuning}
\tau_r([0; a_1, a_2, \dots]) = [0; S_1 S_0^{a_1-1}S_1S_0^{a_2-1}\dots] 
\end{equation}
Note that this map is well defined even on rational values (where the continued fraction representation is not unique); 
for instance, $\tau_{1/3}([0; 3, 1]) = [0; 3, 2, 1, 2, 1, 3]=[0; 3, 2, 1, 2, 1, 2, 1]= \tau_{1/3}([0; 4])$.

It will be sometimes useful to
consider the action that $\tau_r$ induces on finite strings of
positive integers: with a slight abuse of notation we shall denote
this action by the same symbol $\tau_r$.

\begin{lemma} \label{inj}
For each $r \in \mathbb{Q}_E$, the map $\tau_r$ is strictly increasing (hence injective).
Moreover, $\tau_r$ is continuous at all irrational points, and discontinuous at every positive rational number.
\end{lemma}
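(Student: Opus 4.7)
The plan is to prove the three claims (strict monotonicity, continuity at irrationals, discontinuity at positive rationals) in turn.

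\emph{Strict monotonicity.} I would proceed by induction on the first index $k$ at which the continued fraction expansions of $x = [0; a_1, a_2, \dots]$ and $y = [0; b_1, b_2, \dots]$ differ. Two structural facts drive the proof. Since $|S_1|$ is odd and $|S_0|$ is even, every tuning block $B_a := S_1 S_0^{a-1}$ has odd length, and so $f_{B_a}$ is orientation-reversing. Moreover, since $S_1$ is the odd-length representation of $r$, one has $S_1 << S_0$ (as recalled just before Proposition \ref{stringlemma}), so by property (2) of the $<<$ order, $[0; S_1 X] < [0; S_0 Y]$ for arbitrary continuations $X, Y$. For the base case $k=1$, assume WLOG $a_1 < b_1$, so that $x > y$: the expansions $\tau_r(x)$ and $\tau_r(y)$ share the odd-length prefix $S_1 S_0^{a_1-1}$ and continue, respectively, with $S_1 \dots$ and $S_0 \dots$. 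Applying the orientation-reversing $f_{S_1 S_0^{a_1-1}}$ to $[0; S_1 \dots] < [0; S_0 \dots]$ gives $\tau_r(x) > \tau_r(y)$, consistent with $x > y$. For the inductive step, when $a_1 = b_1$, use the semiconjugacy $\tau_r(x) = f_{B_{a_1}}(\tau_r(Tx))$, where $T$ is the Gauss shift, together with the analogous $x = f_{(a_1)}(Tx)$. Both $f_{(a_1)}$ and $f_{B_{a_1}}$ are orientation-reversing, so $x < y \Leftrightarrow Tx > Ty$ and $\tau_r(x) < \tau_r(y) \Leftrightarrow \tau_r(Tx) > \tau_r(Ty)$; the inductive hypothesis applied to $(Tx, Ty)$, whose first differing index is $k-1$, then closes the step.

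\emph{Continuity at irrationals.} If $x_0 = [0; a_1, a_2, \dots]$ is irrational and $x_k \to x_0$, the number $N_k$ of initial CF digits shared between $x_k$ and $x_0$ tends to infinity. Consequently $\tau_r(x_k)$ and $\tau_r(x_0)$ share a common CF prefix $U_k := S_1 S_0^{a_1-1} \cdots S_1 S_0^{a_{N_k}-1}$, and can both be written as $f_{U_k}$ applied to points of $[0,1]$. The contraction estimate \eqref{contraction} then yields $|\tau_r(x_k) - \tau_r(x_0)| \le 1/q(U_k)^2 \to 0$.

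\emph{Discontinuity at positive rationals.} For $x_0 = [0; a_1, \dots, a_n] \in \mathbb{Q} \cap (0, 1)$, the value $\tau_r(x_0) = [0; S_1 S_0^{a_1-1} \cdots S_1 S_0^{a_n-1}]$ is a finite CF, hence a rational number. Taking the sequence $x_k := [0; a_1, \dots, a_n, k] \to x_0$, the tuned values
\[
\tau_r(x_k) = [0; S_1 S_0^{a_1-1} \cdots S_1 S_0^{a_n-1} S_1 S_0^{k-1}]
\]
converge to $[0; S_1 S_0^{a_1-1} \cdots S_1 S_0^{a_n-1} S_1 \overline{S_0}]$, a quadratic irrational (its CF expansion is infinite and eventually periodic). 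An irrational cannot equal a rational, so $\lim_{k} \tau_r(x_k) \ne \tau_r(x_0)$, and $\tau_r$ is discontinuous at $x_0$.

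The main obstacle is the parity bookkeeping in the monotonicity induction: one must confirm that the orientation-reversing behavior of the Gauss shift $T$ and of $f_{B_{a_1}}$ conspire so that the sense of comparison is transported consistently at each step. Once these parities are checked, the remaining items---the contraction estimate for continuity, and the rational-versus-irrational dichotomy for discontinuity---are essentially routine.
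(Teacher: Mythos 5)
Your proof is correct and follows essentially the same route as the paper's: the monotonicity induction is a repackaging of the paper's direct common-prefix comparison (both rest on $S_1 << S_0$ and on the odd length of each block $S_1S_0^{a-1}$), the continuity argument via long shared prefixes is the same, and your limit $[0;\tau_r(S)\,S_1\overline{S_0}]$ is precisely the supremum appearing in the paper's inequality \eqref{ratlgap}. The one point to tidy up is that your induction as written compares two infinite expansions, so strict monotonicity \emph{across a rational point} $c=[0;S]$ (where one expansion is a finite prefix of the other) still needs the two-sided order statement $\sup_{\alpha<c}\tau_r(\alpha)<\tau_r(c)<\inf_{\alpha>c}\tau_r(\alpha)$ of \eqref{ratlgap}, not merely the inequality $\lim_k\tau_r(x_k)\neq\tau_r(c)$ that your discontinuity argument supplies; this follows by the same semiconjugacy and parity bookkeeping, comparing $f_{\tau_r(S)}(0)$ with $f_{\tau_r(S)}(\tau_r([0;A]))$.
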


The first key feature of tuning operators is that 
they map the bifurcation set into a small copy of itself:

\begin{proposition}\label{qrenorm}
Let $r\in \QQ_E$. Then  %, let $\alpha_1:=\tau_r(g)$, then %%cc corretto
\begin{enumerate}
\item[(i)] $\tau_r(\EE) = \EE \cap W_r$, and $\tau_r$ is a homeomorphism of $\EE$ onto $\EE \cap W_r$;
\item[(ii)] $\tau_r(\QQ_E)=\QQ_E \cap W_r\setminus \{r\}$. %%cc
\end{enumerate}
\end{proposition}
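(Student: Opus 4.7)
My plan leans on three results established earlier: the orbit characterization $\EE = \{x : G^k(x) \geq x \text{ for all } k \geq 0\}$, the block-Cantor description $\BB(\omega) \cap [\omega, \alpha_0] = K(\Sigma)$ from Proposition \ref{concat} (with $\Sigma = \{S_0, S_1\}$), and the extremality criterion Proposition \ref{stringlemma} characterizing $\QQ_E$ at the string level. I prove (i) by establishing the two inclusions $\tau_r(\EE) \subseteq \EE \cap W_r$ and $\EE \cap W_r \subseteq \tau_r(\EE)$; the homeomorphism claim then follows from compactness of $\EE$ and the injectivity plus continuity of $\tau_r$ at irrationals given by Lemma \ref{inj} (and the only positive rationals in the image cannot lie in the closed set $\EE$, so the discontinuities of $\tau_r$ at positive rationals are harmless). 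Part (ii) is addressed in parallel, transferring the $\QQ_E$-criterion across $\tau_r$ by the same block-level bookkeeping.

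For the forward inclusion, fix $x = [0; a_1, a_2, \ldots] \in \EE$. The c.f.\ of $\tau_r(x)$ is an infinite concatenation $U_1 U_2 \ldots$ of blocks in $\Sigma$ with $U_1 = S_1$, so by Proposition \ref{concat} the image lies in $K(\Sigma) \subseteq \BB(\omega)$, and the bounds $\omega \leq \tau_r(x) < \alpha_0$ follow from monotonicity of $\tau_r$ (Lemma \ref{inj}) combined with $S_1 << S_0$. The content is then verifying $G^k(\tau_r(x)) \geq \tau_r(x)$ for every $k \geq 0$. I split into three cases based on where $k$ falls in the block decomposition:
\begin{itemize}
\item If $k$ is a \emph{group boundary}, $k = |S_1 S_0^{a_1-1} \cdots S_1 S_0^{a_{j-1}-1}|$ for some $j \geq 1$, then $G^k(\tau_r(x)) = \tau_r(G^{j-1}(x))$, so $G^{j-1}(x) \geq x$ and monotonicity of $\tau_r$ yield the inequality.
\item If $k$ is a block boundary strictly inside a group, the next block is an $S_0$, so $G^k(\tau_r(x))$ starts with $S_0$ while $\tau_r(x)$ starts with $S_1$; since $S_1 << S_0$, the inequality is strict.
\item If $k$ falls strictly inside a block $S_i = AB$ with $A, B$ nonempty, Proposition \ref{stringlemma} applied to $r \in \QQ_E$ gives $AB << BA$ (the degenerate case $A = B$ with $|A|$ odd can occur only for $i = 0$, since $|S_1|$ is odd). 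Transferring this string inequality to continued-fraction values via property (2) of $<<$ gives $G^k(\tau_r(x)) > \tau_r(x)$.
\end{itemize}

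For the reverse inclusion, take $y \in \EE \cap W_r$. By Remark \ref{inclusion} and Proposition \ref{concat}, $y = [0; U_1 U_2 \ldots]$ with $U_i \in \Sigma$. Running the second bullet above in reverse forces $U_1 = S_1$: if $U_1 = S_0$ then $G^{|S_0|}(y)$ would start with $S_1$, giving $G^{|S_0|}(y) < y$ and contradicting $y \in \EE$. Regrouping the blocks as $S_1 S_0^{b_1} S_1 S_0^{b_2} \ldots$, I identify $y = \tau_r(x)$ with $x = [0; b_1+1, b_2+1, \ldots]$, and the first bullet in reverse gives $x \in \EE$. For (ii), $\tau_r$ sends finite c.f.\ strings to finite ones, so $\tau_r(\QQ) \subseteq \QQ$; verifying $\tau_r(S) \in \QQ_E$ via Proposition \ref{stringlemma} splits into two sub-cases depending on whether a candidate splitting of $\tau_r(S)$ respects the $\Sigma$-block boundaries (in which case it pulls back to a splitting of $S$ and uses $S \in \QQ_E$) or falls mid-block (handled by the $\QQ_E$-criterion applied directly to $S_0$ or $S_1$). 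The reverse containment $\QQ_E \cap W_r \setminus \{r\} \subseteq \tau_r(\QQ_E)$ follows from the same block decomposition, and $r$ is excluded because every $\tau_r(x)$ with $x \in (0,g]$ contains at least two $S_1$-blocks and so has c.f.\ length strictly greater than both $|S_0|$ and $|S_1|$.

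The main obstacle is the mid-block case (the third bullet) and the symmetric mid-block step of the $\QQ_E$-preservation argument: both require converting a string inequality $AB << BA$ into a continued-fraction inequality between values whose initial strings have different lengths and may share a common prefix (since $S_0$ and $S_1$ share the c.f.\ prefix of $r$). This forces delicate parity tracking in the partial order $<<$ of Section \ref{backg}, and separate handling of the degenerate splittings $S_0 = AA$ with $|A|$ odd; once these string-level comparisons are in place, the rest of the proof is a bookkeeping exercise combining them with the block decomposition.
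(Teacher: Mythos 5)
For part (i) your route is essentially the paper's: your three-case analysis of the shifted expansion (group boundaries, block boundaries inside a group, mid-block positions) is exactly the content of the paper's Lemma \ref{induced_order}, and the ``main obstacle'' you flag in the mid-block case is precisely the paper's Lemma \ref{prosuf}. You state what is needed there but do not close it, and this is the one substantive hole in your part (i). The paper closes it with a short observation you are missing: if $B$ is a nonempty proper suffix of $S_0$ or $S_1$ with $B \neq (1)$, say $S_i = AB$, then $A$ is automatically a prefix of \emph{both} $S_0$ and $S_1$, because these are the two continued fraction expansions of the same rational $r$ and hence agree except in the last two digits. Therefore, whatever block $S_*$ follows, $G^k(\tau_r(x))$ begins with $BA$; extremality of $r$ (Proposition \ref{stringlemma}) gives $BA \geq AB = S_0 >> S_1$ or $BA >> AB = S_1$, and property (2) of the order $<<$ yields $G^k(\tau_r(x)) > [0;S_1] \geq \tau_r(x)$. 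The case $B=(1)$ is disposed of separately since the first digit of $S_1$ is at least $2$. Without this common-prefix observation the ``delicate parity tracking'' you defer does not terminate cleanly. (In the reverse inclusion you should also treat the degenerate case where the block expansion of $y$ ends in $\overline{S_0}$, which is not of the form $S_1 S_0^{b_1} S_1 \cdots$; there $y = \omega = \tau_r(0)$.)

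For part (ii) you genuinely diverge from the paper, and to your disadvantage. Verifying the criterion of Proposition \ref{stringlemma} directly for the string $\tau_r(S)$ reproduces, in harder form, all the mid-block combinatorics above, and you do not carry it out. The paper instead deduces (ii) from (i) with no string manipulation: if $I_p = (\alpha_1, \alpha_0)$ is maximal, then $\tau_r(\alpha_1), \tau_r(\alpha_0) \in \EE$ by (i), and no point of $\EE$ lies between them since $\tau_r$ is strictly increasing and maps $\EE$ onto $\EE \cap W_r$; hence $(\tau_r(\alpha_1), \tau_r(\alpha_0))$ is a connected component of the complement of $\EE$, i.e.\ a maximal interval $I_s$, and $s = \tau_r(p)$ because $\tau_r(p)$ is a convergent of both endpoints, which characterizes the pseudocenter. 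The converse is symmetric. I would adopt that argument. Finally, your reason for excluding $r$ from the image is slightly wrong: $\tau_r(1/n) = [0; S_1 S_0^{n-1}]$ contains only one $S_1$-block; the correct one-line reason is that $\tau_r(p) = [0; S_1 W]$ with $W$ nonempty and $|S_1|$ odd forces $\tau_r(p) < [0; S_1] = r$.
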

%Note that, as a consequence of (i) and lemma \ref{inj}, $\tau_r$ is a homemorphism between $\EE$ and $\EE \cap W_r$.
Let us moreover notice that tuning windows are nested:

\begin{lemma} \label{nested}
Let $r, s \in \mathbb{Q}_E$. Then the following are equivalent:
\begin{enumerate}
 \item[(i)] 
$\overline{W_r} \cap \overline{W_s} \neq \emptyset \textup{ with }r < s$; 
\item[(ii)] $r = \tau_s(p) \textup{ for some }p 
\in \mathbb{Q}_E$;
\item[(iii)]$\overline{W_r} \subseteq W_s$.
\end{enumerate}
\end{lemma}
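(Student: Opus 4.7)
The plan is to run the cycle (ii) $\Rightarrow$ (iii) $\Rightarrow$ (i) $\Rightarrow$ (ii), with the last implication carrying the substance. For (ii) $\Rightarrow$ (iii), the key algebraic fact is that tuning operators compose: applying \eqref{eq:tuning} twice verifies that if $r = \tau_s(p)$, then the two finite expansions of $r$ satisfy $S_\epsilon^{(r)} = \tau_s(S_\epsilon^{(p)})$ as strings, so $\tau_r = \tau_s \circ \tau_p$. Evaluating this identity at $0$ and at $g = [0;\overline{1}]$ gives $\omega_r = \tau_s(\omega_p)$ and $\alpha_0^{(r)} = \tau_s(\alpha_0^{(p)})$, and since $\tau_s$ is monotone (Lemma \ref{inj}) with $\tau_s(0) = \omega_s$ and $\tau_s(g) = \alpha_1^{(s)} < \alpha_0^{(s)}$, we conclude $\overline{W_r} \subseteq [\omega_s, \alpha_1^{(s)}] \subsetneq W_s$. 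The implication (iii) $\Rightarrow$ (i) is then immediate: non-emptiness of the intersection is clear, and $r \in W_r \subseteq W_s$ together with Proposition \ref{qrenorm}(ii) forces $r = \tau_s(p)$ for some $p \in \QQ_E$ (the alternative $r = s$ is excluded because $\alpha_0^{(s)} \in \overline{W_s} \setminus W_s$), hence $r \leq s$, and $r \neq s$ upgrades to $r < s$.

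For the nontrivial direction (i) $\Rightarrow$ (ii), assume $r < s$ and $\overline{W_r} \cap \overline{W_s} \neq \emptyset$; it suffices to show $r \in W_s$, since Proposition \ref{qrenorm}(ii) then supplies the required $p$. I first rule out $s \in W_r$: Proposition \ref{qrenorm}(ii) applied inside $W_r$ would give $s = r$ or $s \leq r$, both excluded. Since $s > r > \omega_r$, this forces $s \geq \alpha_0^{(r)}$, upgraded to $s > \alpha_0^{(r)}$ by rationality of $s$ against irrationality of $\alpha_0^{(r)}$. Picking $x \in \overline{W_r} \cap \overline{W_s}$ yields $\omega_s \leq x \leq \alpha_0^{(r)}$; if $r \geq \omega_s$ we are done (as $r < s < \alpha_0^{(s)}$). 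Otherwise $r < \omega_s \leq \alpha_0^{(r)}$, so $\omega_s \in (r, \alpha_0^{(r)}] \subseteq I_r \cup \{\alpha_0^{(r)}\}$; but $\omega_s = \tau_s(0) \in \EE$ by Proposition \ref{qrenorm}(i), while $\EE \cap I_r = \emptyset$, so the strict inequality $\omega_s < \alpha_0^{(r)}$ is contradictory, pinning us to $\omega_s = \alpha_0^{(r)}$.

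The main obstacle is therefore ruling out this last degenerate possibility. Since $\alpha_0^{(r)} = [0; \overline{S_0^{(r)}}]$ is purely periodic, equality would force the CF expansion $[0; S_1^{(s)} \overline{S_0^{(s)}}]$ of $\omega_s$ to be purely periodic as well, i.e., $\omega_s$ would have to lie on the Gauss cycle through $\alpha_0^{(s)}$, which consists of the cyclic shifts of $\overline{S_0^{(s)}}$. Equating $\omega_s$ with such a shift and using that the two finite CF expansions of $s$ share all but their final entries (one ending in some $a_k \geq 2$, the other in $\ldots, a_k - 1, 1$), a short position-by-position comparison forces $a_k = 1$, contradicting $a_k \geq 2$. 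This excludes $\omega_s = \alpha_0^{(r)}$, yields $r \in W_s$, and completes the proof.
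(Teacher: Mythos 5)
Your proof is correct and follows essentially the same route as the paper's: the substance in both cases is that $\omega_s=\tau_s(0)$ lies in $\EE$, that $\EE$ misses $(r,\alpha_0^{(r)})$, and that $\omega_s$ cannot equal the purely periodic $\alpha_0^{(r)}$, which pins $\omega_s\le r$ and hence $r\in W_s$; your exclusion of the degenerate case $\omega_s=\alpha_0^{(r)}$ is in fact spelled out in more detail than the paper's one-line assertion. One small slip in (ii)~$\Rightarrow$~(iii): evaluating $\tau_r=\tau_s\circ\tau_p$ at $g=[0;\overline{1}]$ gives $\alpha_1^{(r)}=\tau_s(\alpha_1^{(p)})$ (since $\tau_r(g)=[0;\overline{S_1^{(r)}}]$ is the \emph{left} endpoint of $I_r$), not $\alpha_0^{(r)}=\tau_s(\alpha_0^{(p)})$; the latter identity is what you actually need, and it is true, but it follows from your string-level observation $S_0^{(r)}=\tau_s(S_0^{(p)})$ (an even-length expansion of $r$ must be \emph{the} even-length expansion) rather than from evaluation at $g$ --- with that justification substituted, the chain $\overline{W_r}\subseteq[\omega_s,\alpha_1^{(s)}]\subseteq W_s$ goes through as you wrote it.
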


%Then $\tau_r(I_p)$ is a 
%maximal quadratic interval of pseudocenter $\tau_r(p)$.  In symbols, 
%if $p,r\in \QQ_E$ then $\tau_r(p) \in \QQ_E\cap W_r$ and 
%$$\tau_r(\partial I_p) =\partial I_{\tau_r(p)}.$$

\subsection{Proofs}

\begin{proof}[Proof of lemma \ref{inj}]
Let us first prove that $\tau_r$ preserves the order between irrational numbers. Pick $\alpha, \beta \in (0,1) \setminus \mathbb{Q}$, 
 $\alpha\neq \beta$. Then
$$\alpha:=[0;P,a,a_2,a_3,...], \ \ \ \ \beta:=[0;P,b,b_2,b_3,...] $$
where $P$ is a finite string of positive integers (common prefix), and
we may assume also that $a<b$. Then
$$\tau_r(\alpha):=[0;\tau_r(P),S_1,S_0^{a-1},S_1,...],
\ \ \ \ \tau_r(\beta):=[0;\tau_r(P),S_1,S_0^{b-1},S_1,...].$$ 
Since $|S_0^{a-1}|$ is even and
$S_1<<S_0$, we get $S_0^{a-1}S_1<<S_0^{b-1}S_1$, whence
$S_1S_0^{a-1}S_1>>S_1S_0^{b-1}S_1$. Therefore, since $|P|\equiv|\tau_r(P)|
\mod 2$, we get that either $|P|$ is even, $\alpha>\beta$ and
$\tau_r(\alpha)>\tau_r(\beta)$, or 
$|P|$ is odd , $\alpha<\beta$ and
$\tau_r(\alpha)<\tau_r(\beta)$, so we are done. 
The continuity of $\tau_r$ at irrational points follows from the fact that if $\beta \in (0,1)\setminus \QQ$ and $x$ is close to $\beta$ then the continued fraction expansions of $x$ and $\beta$ have a long common prefix, 
and, by definition of $\tau_r$, 
then their images will also have a long prefix in common, and will therefore be close to each other.
Finally, let us check that the function is increasing at each rational number $c > 0$. This follows from the property:
\begin{equation} \label{ratlgap}
\sup_{\stackrel{\alpha \in \mathbb{R}\setminus \mathbb{Q}}{\alpha < c}} \tau_r(\alpha) < \tau_r(c) < \inf_{\stackrel{\alpha \in\mathbb{R} \setminus \mathbb{Q}}{\alpha > c}}\tau_r(\alpha)
\end{equation}
Let us prove the left-hand side inequality of \eqref{ratlgap} (the right-hand side one has essentially the same proof).
Suppose $c = [0; S]$, with $|S| \equiv 1 \mod 2$. Then every irrational $\alpha < c$ has an expansion of the form $\alpha = [0; S, A]$ with $A$ an 
infinite string. Hence $\tau_r(\alpha) = [0; \tau_r(S), \tau_r(A)]$, and it is not hard to check that 
$\sup \tau_r(\alpha) = [0; \tau_r(S), S_1, \overline{S_0}] < [0; \tau_r(S)] = \tau_r(c)$.
Discontinuity at positive rational points also follows from \eqref{ratlgap}.
\end{proof}

To prove Propositions   \ref{concat}
and \ref{qrenorm} we first need some lemmata.

%%%% lemma 7 GBT
\begin{lemma}\label{prosuf}
Let $r=[0;S_0]=[0;S_1]\in \QQ_E$ and  $y$ be an irrational number with c.f. expansion $y = [0;  B, S_*, \dots]$, 
where $B$ is a proper suffix of either $S_0$ or $S_1$, and $S_*$ equal to either $S_0$ or $S_1$. Then $y> [0;S_1]$.
\end{lemma}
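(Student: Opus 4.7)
The plan is to compare the continued fraction expansion of $y$ digit-by-digit with that of $r$, using Proposition \ref{stringlemma} to control the first position of disagreement. Let $\tilde S \in \{S_0, S_1\}$ be the string of which $B$ is a proper suffix, and write $\tilde S = AB$ with $A$ non-empty. Since $r \in \QQ_E$, Proposition \ref{stringlemma} applied to $\tilde S$ yields either (i) $AB < BA$ as strings of the common length $|\tilde S|$, or (ii) $A = B$ with $|A|$ odd. Case (ii) forces $|\tilde S| = 2|A|$ to be even, so $\tilde S = S_0$; in particular, when $\tilde S = S_1$ only case (i) is possible.

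The key observation is that the first $|\tilde S|$ digits of $y$ are $B$ followed by the first $|A|$ digits of $S_*$. Since $S_0$ and $S_1$ are the two continued-fraction expansions of the same rational $r$, they share an initial segment of length at least $\min(|S_0|,|S_1|) - 1$, and so, outside a short list of degenerate configurations described below, the first $|A|$ digits of $S_*$ coincide with $A$. In the generic situation, therefore, the first $|\tilde S|$ digits of $y$ form the string $BA$.

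In case (i), combining $\tilde S < BA$ (hence $\tilde S << BA$ by property (1)) with property (2) gives $[0; \tilde S X] < [0; BA Z]$ for every pair of infinite strings $X, Z$. Writing $y = [0; BA Z_y]$, where $Z_y$ is the tail of $y$'s continued fraction after its first $|\tilde S|$ digits, we obtain $y > [0; \tilde S X]$ for every infinite $X$. When $\tilde S = S_0$ (resp.\ $\tilde S = S_1$), the parity of $|\tilde S|$ makes $f_{\tilde S}$ increasing (resp.\ decreasing), and so $\inf_X [0; S_0 X] = r$ (resp.\ $\sup_X [0; S_1 X] = r$); in either case we deduce $y \geq r$, with strictness because $y$ is irrational while $r$ is rational. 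In case (ii), the first $|S_0|$ digits of $y$ are $AA = S_0$, followed by a non-empty infinite tail, so $y = f_{S_0}(t)$ with $t > 0$; monotonicity of $f_{S_0}$ (since $|S_0|$ is even) yields $y > f_{S_0}(0) = r$.

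The main obstacle is the handful of degenerate configurations in which the first $|A|$ digits of $S_*$ do not equal $A$. A direct inspection shows these arise only when $|B| = 1$, $\tilde S$ is the longer of the two representations of $r$ (so $\tilde S$ ends in $1$ and $B = (1)$), and $S_*$ is the shorter representation. The extreme case is $r = 1/2$ with $S_0 = (1,1)$ and $S_1 = (2)$, where one verifies directly that $y = [0; 1, 2, \ldots]$ lies in $(2/3, 1)$, so $y > 1/2 = r$. In the remaining degenerate cases, one performs a direct digit-wise comparison between $y$ and the appropriate representation of $r$: the additional splittings of $\tilde S$ guaranteed by Proposition \ref{stringlemma} force either the first digit $s_1$ of $\tilde S$ to be $\geq 2$ (so position $1$ already gives $r < y$), or the first digit of $\tilde S$ that differs from $1$ to occur at an odd position, which is exactly the parity needed to make the first disagreement between $y$ and $r$ favour $y > r$.
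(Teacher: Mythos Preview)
Your approach is essentially the paper's: split $\tilde S = AB$, invoke Proposition~\ref{stringlemma}, and use that $S_*$ begins with $A$ so that the expansion of $y$ starts with $BA$. The paper's write-up is tighter --- the only degenerate configuration is $B=(1)$, dispatched in one line since the first digit of $S_1$ is always $\geq 2$ for $r\in\QQ_E$, and in the main case the paper compares $BA$ directly with $S_1$ (via $BA\geq AB=S_0>>S_1$ when $\tilde S=S_0$, or $BA>>AB=S_1$ when $\tilde S=S_1$) rather than with $\tilde S$ followed by an inf/sup --- which sidesteps the hand-wavy case analysis in your final paragraph.
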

\begin{proof}
If $B=(1)$ then there is hardly anything to prove (by Prop. \ref{stringlemma}, the first digit of $S_1$ is strictly greater than $1$).
If not, then one of the following is true:
\begin{enumerate}
\item $S_0=AB$ and $A$ is a prefix of $S_1$ as well; 
\item $S_1=AB$ and $A$ is a prefix of $S_0$ as well. 
\end{enumerate}
By Prop. \ref{stringlemma}, in the first case we get that $BA\geq AB=S_0>>S_1$, while in the latter $BA>>AB=S_1$; so in both cases
$BA>>S_1$ and the claim follows.
\end{proof}

%%% Lemma 8 GBT
\begin{lemma} \label{induced_order}
Let $r \in \mathbb{Q}_E$, and $x, y \in [0, 1] \setminus \mathbb{Q}$. Then
$$G^k(x) \geq y \ \ \forall k \geq 0$$
if and only if 
$$G^k(\tau_r(x)) \geq \tau_r(y) \ \ \forall k \geq 0$$
\end{lemma}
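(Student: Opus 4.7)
The plan is to parse the continued fraction expansion of $\tau_r(x)$ as an infinite concatenation of $S_0$- and $S_1$-blocks, classify each Gauss-orbit position according to whether it falls at a block boundary (``aligned'') or strictly inside some block (``unaligned''), and treat the two cases separately. Writing $x = [0; a_1, a_2, \ldots]$, the definition of $\tau_r$ gives
$$\tau_r(x) = [0; S_1\, S_0^{a_1-1}\, S_1\, S_0^{a_2-1}\, S_1\, S_0^{a_3-1}\, \ldots],$$
and for each $j \geq 0$ the aligned position
$$k_j := j|S_1| + \bigl((a_1 - 1) + \cdots + (a_j - 1)\bigr)|S_0|$$
satisfies $G^{k_j}(\tau_r(x)) = \tau_r(G^j(x))$ by direct inspection. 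Every other $k$ is unaligned, and a short case analysis (on whether $k$ sits inside an $S_1$-block, inside an interior copy of $S_0$, or inside the last copy of $S_0$ within some $S_0^{a_j-1}$) shows that $G^k(\tau_r(x))$ then begins with a nonempty proper suffix $B$ of either $S_0$ or $S_1$, immediately followed by a full copy of $S_0$ or $S_1$.

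For the forward implication, suppose $G^j(x) \geq y$ for all $j$. At an aligned $k = k_j$, monotonicity of $\tau_r$ (Lemma~\ref{inj}) yields $G^{k_j}(\tau_r(x)) = \tau_r(G^j(x)) \geq \tau_r(y)$. At an unaligned $k$, Lemma~\ref{prosuf} gives $G^k(\tau_r(x)) > [0; S_1] = r$, and since $y < 1$ is irrational we also have $\tau_r(y) < \tau_r(1) = r$, so $G^k(\tau_r(x)) > \tau_r(y)$ holds automatically. For the converse, assume $G^k(\tau_r(x)) \geq \tau_r(y)$ for every $k$; specializing to the aligned positions $k = k_j$ gives $\tau_r(G^j(x)) \geq \tau_r(y)$, and strict monotonicity of $\tau_r$ lets us strip off $\tau_r$ to conclude $G^j(x) \geq y$ for all $j$.

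The only genuinely nontrivial step is verifying that unaligned suffixes fit the shape required by Lemma~\ref{prosuf}: this is where the hypothesis $r \in \QQ_E$ and Proposition~\ref{stringlemma} enter, guaranteeing that every ``out-of-phase'' shift of the tuned expansion automatically exceeds $r$ and therefore dominates any element of $\tau_r([0,1])$. The remaining pieces --- computing the $k_j$ and the identity $G^{k_j}(\tau_r(x)) = \tau_r(G^j(x))$ --- are purely bookkeeping on finite strings, so I do not expect any obstacle beyond the short case analysis for unaligned $k$.
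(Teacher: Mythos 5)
Your proof follows the paper's argument essentially verbatim: your aligned positions $k_j$ are exactly the paper's $N_j$, the identity $G^{N_j}(\tau_r(x)) = \tau_r(G^j(x))$ together with monotonicity of $\tau_r$ handles those, and Lemma \ref{prosuf} disposes of the out-of-phase shifts, just as in the paper. (Both you and the paper pass quickly over the shifts landing exactly at an internal block boundary, where the expansion begins with a full copy of $S_0$ rather than a nonempty proper suffix; these are handled at once by $S_0 >> S_1$, so nothing essential is lost.)
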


\begin{proof}
Since $\tau_r$ is increasing, $G^k(x) \geq y$ if and only if
$\tau_r(G^k(x)) \geq \tau_r(y)$ if and only if $G^{N_k}(\tau_r(x))
\geq \tau_r(y)$ for $N_k = |S_0|(a_1 + \dots + a_k) +
(|S_1|-|S_0|)k$. 

On the other hand, if $h$ is not of the form $N_k$,
$G^h(\tau_r(x)) = [0; B, S_*, \dots]$ with $B$ a proper suffix of either $S_0$
or $S_1$, and $S_*$ equal to either $S_0$ or $S_1$.  By
Lemma \ref{prosuf} it follows immediately that 
$$G^h(\tau_r(x)) > [0; S_1] \geq \tau_r(y)$$
\end{proof}

\begin{proof}[Proof of Proposition \ref{concat}]
  Let us first
prove that, if $x\in \BB(\omega) \cap [\omega, \alpha_0] $ then
$x=S\cdot y$ with $y\in \BB(\omega) \cap [\omega, \alpha_0]$ and $S\in
\{S_0,S_1\}$; then the inclusion 
$$\BB(\omega) \cap [\omega, \alpha_0] \subset K(\Sigma)$$
will follow by induction. If $x\in \BB(\omega) \cap [\omega, \alpha_0]$ then
the following alternative holds
\begin{enumerate}
\item[($x>r$)] $x=S_0\cdot y$ and $ S_0\cdot y=x<\alpha_0=S_0\cdot \alpha_0$, therefore $ y \leq \alpha_0$;
\item[($x<r$)] $x=S_1\cdot y$ and $ S_1\cdot y=x>\omega=S_1\cdot \alpha_0$, therefore $ y \leq \alpha_0$;
\end{enumerate}
%\begin{eqnarray*}
%x=S_0\cdot y & \mbox{ and }  & S_0\cdot y=x<\alpha=S_0\cdot \alpha& \ \ \ \ \Rightarrow y \leq \alpha\\
%x=S_1\cdot y & \mbox{ and }  & S_1\cdot y=x>\omega=S_0\cdot \omega& \ \ \ \ \Rightarrow y \leq \alpha
%\end{eqnarray*}
Note that, since the map $y\mapsto S\cdot y$ preserves or reverses the
order depending on the parity of $|S|$, in both cases we get to the
same conclusion. Moreover, since $\BB(\omega)$ is forward-invariant 
with respect to the Gauss map and $x\in \mathcal{B}(\omega)$, then $y=G^k(x) \in \BB(\omega)$ as well, hence $y\in \BB(\omega) \cap [\omega,
  \alpha_0]$.

To prove the other inclusion, let us first remark that every $x\in
K(\Sigma)$ satisfies $\omega\leq x\leq \alpha_0$. Now, let $k\in \NN$;
either $G^k(x)\in K(\Sigma)$, and hence $G^k(x) \geq \omega$, or
$G^k(x)=[0;B,S_*,...]$ satisfies the hypotheses of Lemma \ref{prosuf},
and hence we get that $y> [0;S_1]>\omega$. Since $G^k(x) \geq\omega$ holds for
any $k$, then $x\in \BB(\omega)$.
\end{proof}

\medskip
\begin{proof}[Proof of Proposition \ref{qrenorm}]
(i) Recall the notation $W_r = [\omega, \alpha_0)$, and let $v \in \EE \cap W_r$. By Remark \ref{inclusion}, $\EE \cap W_r \subseteq \BB(\omega) \cap [\omega, \alpha_0)$, 
hence, by Proposition \ref{concat}, $v\in K(\Sigma)$.
Moreover, $v < r$ because $\EE \cap [r, \alpha_0) = \emptyset$.
As a consequence, the c.f. expansion of $v$ is an infinite concatenation of strings in the alphabet $\{S_0, S_1\}$ starting 
with $S_1$. Now, if the expansion of $v$ terminates with $\overline{S_0}$, then 
$G^k(v) = \omega$ for some $k$, hence $v$ must coincide 
with $\omega = [0; S_1 \overline{S_0}]$, so  $v=\tau_r(0)$ and we are done. Otherwise, 
there exists some $x \in [0, 1)$ such that $v=\tau_r(x)$: then  
by Lemma \ref{induced_order} we get that
$$G^k(v) \geq v  \ \ \forall k \geq 0  \Rightarrow 
G^k(x) \geq x  \ \ \forall k \geq 0
$$ 
which means $x$ belongs to $\EE$.

Viceversa, let us pick $x:= \tau_r(v)$ with $v \in \EE$. By definition of $\tau_r$, 
$x \in W_r$. Moreover, since $v$ belongs to $\EE$, $G^n(v) \geq v$ for any $n$, 
hence by Lemma \ref{induced_order} also $\tau_r(v)$ belongs to $\EE$.
The fact that $\tau_r$ is a homeomorphism follows from bijectivity and compactness.

%(ii) By definition of $\mathbb{Q}_E$, the connected components of the complement of $\EE \cap [0, g]$ 
%are of the form $I_r$ with $r \in \mathbb{Q}_E$.
%By point (i), they are mapped via $\tau_r$ to the connected components of the complement of 
%$\EE \cap [\omega, \tau_r(g)]$, and the latter set is the set of 
%all maximal quadratic intervals $I_s$ with $s \in \mathbb{Q}_E \cap W_r \setminus \{ r\}$.

(ii) Let $p\in \QQ_E$ and $I_p = (\alpha_1, \alpha_0)$ the maximal quadratic interval generated by $p$; 
by point (i) above also the values $\beta_i:=\tau_r(\alpha_i)$, $(i=0,1)$ belong to $\EE \cap W_r$. Since $\tau_r$
is strictly increasing, no other point of $\EE$ lies between $\beta_1$ and $\beta_0$, hence $(\beta_1, \beta_0)= I_s$ for some $s \in
\QQ_E\cap[\omega, r)$.  Since $\tau_r(p)$ is a convergent to both $\tau_r(\alpha_0)$ and $\tau_r(\alpha_1)$, then $\tau_r(p) = s$.

To prove the converse, pick $s\in \QQ_E\cap [\omega,r)$ and denote $I_s=(\beta_1, \beta_0)$. Again  by point (i),
$\beta_i:=\tau_r(\alpha_i)$ for some $\alpha_0, \alpha_1 \in \EE$, and $(\alpha_1, \alpha_0)$ is a component of
the complement of $\EE$, hence there exists $p \in \mathbb{Q}_E$ such that $I_p = (\alpha_1, \alpha_0)$.
As a consequence, $s=\tau_r(p)$.  
\end{proof}

\begin{proof}[Proof of lemma \ref{nested}]
Let us denote $W_s = [\omega(s), \alpha_0(s))$, $W_r = [\omega(r), \alpha_0(r))$, $W_p = [\omega(p), \alpha_0(p))$.
Suppose (i): %$\overline{W_r} \cap \overline{W_s} \neq \emptyset$ with $r < s$, and denote 
%$W_s = [\omega(s), \alpha_0(s))$, $W_r = [\omega(r), \alpha_0(r))$.
then, since the closures of $W_r$ and $W_s$ are not disjoint, $\omega(s) \leq \alpha_0(r)$. Moreover, $\omega(s) \in\mathcal{E}$ and 
$\mathcal{E} \cap (r, \alpha_0(r)] = \{\alpha_0(r)\}$, hence $\omega(s) \leq r$ because $\omega(s)$ cannot 
coincide with $\alpha_0(r)$, not having a purely periodic c.f. expansion. Hence $r \in W_s$ and, by Proposition \ref{qrenorm}, 
there exists $p \in \mathbb{Q}_E$ such that $r = \tau_s(p)$. 

Suppose now (ii). Then, since $r = \tau_s(p)$, also $\alpha_0(r) = \tau_s(\alpha_0(p)) \leq s < \alpha_0(s)$, and 
$\omega(r) = \tau_s(\omega(p)) \in W_s$, which implies (iii).
%As a consequence, $\alpha_0(r)$ and $\omega(r)$ belong to 
%the regular Cantor set $K(\Sigma)$, which by Proposition \ref{concat} is $\overline{W}_s$. This means
%that $\overline{W}_r \subseteq \overline{W}_s$, hence $W_r \subseteq W_s$.

(iii) $\Rightarrow$ (i) is clear.
\end{proof}

\section{Tuning and monotonicity of entropy: proof of Theorem \ref{main}} \label{sec:mono}

\begin{definition}
Let $A=(a_1,...,a_n)$ be a string of positive integers. Then its \emph{matching index} $\llbracket A \rrbracket$ is the alternating sum
of its digits: 
\begin{equation}\label{eq:altersum}
\llbracket A \rrbracket := \sum_{j=1}^{n} (-1)^{j+1} a_j
\end{equation}
Moreover, if $r = [0; S_0]$ is a rational number between $0$ and $1$ and $S_0$ is its continued fraction expansion of even length, 
we define the \emph{matching index} of $r$ to be
$$\llbracket r \rrbracket := \llbracket S_0 \rrbracket$$
\end{definition}

The reason for this terminology is the following. Suppose $r \in \mathbb{Q}_E$ is the pseudocenter of the maximal quadratic interval $I_r$:
then by Theorem \ref{matching}, a matching condition \eqref{eq:matching} holds, and by formula \eqref{eq:index}
\begin{equation} \label{eq:MNindex}
\llbracket r \rrbracket = \sum_{j = 1}^n (-1)^{j+1} a_j = M - N
 \end{equation}
where $r = [0; S_0]$ and $S_0 = (a_1, \dots, a_n)$.
This means, by Proposition \eqref{Nakada}, that the entropy function $h(\alpha)$ is increasing on $I_r$ iff $\llbracket r \rrbracket > 0$, 
decreasing on $I_r$ iff $\llbracket r \rrbracket < 0$, and constant on $I_r$ iff $\llbracket r \rrbracket = 0$. 

\begin{lemma}\label{ss} Let $r, p \in \mathbb{Q}_E$. Then 
\begin{equation}\label{eq:product} 
\llbracket \tau_r(p)\rrbracket = - \llbracket r \rrbracket
\llbracket p \rrbracket.
\end{equation}

\end{lemma}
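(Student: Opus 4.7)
The plan is to unpack the definition of $\llbracket \cdot \rrbracket$ on the explicit string produced by the tuning formula \eqref{eq:tuning}, and reduce everything to a single bookkeeping identity for concatenation:
\begin{equation*}
\llbracket AB \rrbracket = \llbracket A \rrbracket + (-1)^{|A|}\llbracket B \rrbracket,
\end{equation*}
which is immediate from the definition of $\llbracket \cdot \rrbracket$ as an alternating sum. Write $r = [0; S_0] = [0; S_1]$ with $|S_0|$ even and $|S_1|$ odd, and likewise write $p = [0; a_1,\ldots,a_k]$ using its even-length expansion, so that $k$ is even and $\llbracket p \rrbracket = \sum_{j=1}^k (-1)^{j+1} a_j$.

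By \eqref{eq:tuning}, $\tau_r(p)$ admits the finite expansion $[0; B_1 B_2 \cdots B_k]$ with blocks $B_j := S_1 S_0^{a_j-1}$. First I would verify that this really is the even-length expansion of $\tau_r(p)$: each block has length $|S_1| + (a_j-1)|S_0|$, which is odd since $|S_1|$ is odd and $|S_0|$ is even, so the total length is $\equiv k \equiv 0 \pmod 2$. This parity bookkeeping is the only subtle step and ensures $\llbracket \tau_r(p) \rrbracket$ is correctly computed as the alternating sum of this concatenation.

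Next I would compute block by block. Iterating the concatenation identity and using $|S_0|$ even gives $\llbracket S_0^m \rrbracket = m \llbracket S_0 \rrbracket$, hence
\begin{equation*}
\llbracket B_j \rrbracket = \llbracket S_1 \rrbracket - (a_j - 1)\llbracket S_0 \rrbracket.
\end{equation*}
Applying the concatenation identity to the blocks, and using that each $|B_j|$ is odd (so the prefix-length parity after $j-1$ blocks is $j-1 \bmod 2$), I get
\begin{equation*}
\llbracket \tau_r(p) \rrbracket \;=\; \sum_{j=1}^{k} (-1)^{j-1}\llbracket B_j \rrbracket \;=\; \llbracket S_1 \rrbracket \sum_{j=1}^{k} (-1)^{j-1} \;-\; \llbracket S_0 \rrbracket \sum_{j=1}^{k} (-1)^{j-1}(a_j - 1).
\end{equation*}

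The final step is the cancellation. Since $k$ is even, $\sum_{j=1}^k (-1)^{j-1} = 0$; this kills the $\llbracket S_1 \rrbracket$ contribution entirely, and also kills the $-1$ inside the second sum, leaving $-\llbracket S_0 \rrbracket \sum_j (-1)^{j-1} a_j = -\llbracket r \rrbracket \llbracket p \rrbracket$, as required. The main (and only) obstacle is really the parity check showing that the block expansion produced by \eqref{eq:tuning} coincides with the even-length expansion of $\tau_r(p)$, since without this one cannot legitimately compute $\llbracket \tau_r(p) \rrbracket$ from the block string; after that, everything is a one-line cancellation powered by $k$ being even.
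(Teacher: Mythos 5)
Your proof is correct and follows essentially the same route as the paper's: the same concatenation identity for $\llbracket \cdot \rrbracket$, the same block decomposition $\tau_r(p) = [0; S_1S_0^{a_1-1}\cdots S_1S_0^{a_k-1}]$ giving $\sum_j (-1)^{j+1}\bigl(\llbracket S_1\rrbracket - (a_j-1)\llbracket S_0\rrbracket\bigr)$, and the same cancellation from $k$ being even. The only difference is that you spell out the parity bookkeeping (each block has odd length, so the concatenation is the even-length expansion) which the paper leaves implicit.
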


\begin{proof}
%% spostare nelle dimo ???
%If $A=(a_1,...,a_n)$ is a string of
%positive integers it is  convenient to set: 
%\begin{equation}\label{eq:altersum}
%\llbracket A \rrbracket := \sum_{j=1}^{n} (-1)^{j+1} a_j
%\end{equation}
%This notation is consistent with that introduced before in Theorem \ref{matching}: 
%if $r\in \QQ_E$ and $r = [0; S_0]$ with $|S_0|$ even,  we have
% $\llbracket r \rrbracket := \llbracket S_0 \rrbracket$, since the matching index is just the alternating sum of the partial quotients
%of the continued fraction expansion of even length.
%%%
The double bracket notation behaves well under concatenation, namely:
$$
\llbracket AB \rrbracket := \left\{ 
\begin{array}{ll}
\llbracket A \rrbracket+\llbracket B \rrbracket & \textup{if }|A| \mbox{ even }\\
\llbracket A \rrbracket-\llbracket B \rrbracket & \textup{if }|A| \mbox{ odd }
\end{array}
\right.
$$

Let $p = [0; a_1,...,a_n]$ and $r=[0;S_0]$ be the continued fraction expansions of even length of $p, r\in \QQ_E$; using the definition of $\tau_r$ we get  
$$
\llbracket \tau_r(p)\rrbracket = \sum_{j=1}^{n} (-1)^{j+1}
 \left(\llbracket S_1 \rrbracket-(a_j -1) \llbracket S_0 \rrbracket\right)
$$
and, since $n=|A|$ is even, the right-hand side becomes $ \llbracket S_0 \rrbracket \sum_{j=1}^{n}(-1)^j a_j$, whence
 the thesis.
\end{proof}

\begin{definition}
A quadratic interval $I_r$ is called \emph{neutral} if $\llbracket r \rrbracket = 0$. Similarly, 
a tuning window $W_r$ is called \emph{neutral} if $\llbracket r \rrbracket = 0$. 
\end{definition}
As an example, the rational $r = \frac{1}{2} = [0; 2] = [0; 1, 1]$
%, then $\omega = [0; 2, \overline{1}] = g^2$
%and $\alpha_0 = [0; \overline{1}] = g$, hence 
generates the neutral tuning window $W_{1/2} = [g^2, g)$. 
% is a neutral tuning window. 

\medskip

{ \bf Proof of Theorem \ref{main}.}  
Let $I_r$ be a maximal quadratic interval over which the entropy is increasing. Then, by Theorem \ref{matching} and Proposition \ref{Nakada}, 
for $\alpha \in I_r$, a matching condition \eqref{eq:matching} holds, with $M - N > 0$. This implies by \eqref{eq:MNindex} that
$\llbracket r \rrbracket > 0$.
Let now $I_p$ be another maximal quadratic interval. By Proposition \ref{qrenorm} (ii), $I_{\tau_r(p)}$ is 
also a maximal quadratic interval, and by Lemma \ref{ss}
$$\llbracket \tau_r(p) \rrbracket = - \llbracket r \rrbracket \llbracket p \rrbracket$$
Since $\llbracket r \rrbracket > 0$, then $\llbracket \tau_r(p) \rrbracket$ and $\llbracket p \rrbracket$ have opposite sign. 
In terms of the monotonicity of entropy, this means the following:

\begin{enumerate}
 \item if the entropy is increasing on $I_p$, then by \eqref{eq:MNindex} $\llbracket p \rrbracket > 0$, hence $\llbracket \tau_r(p) \rrbracket < 0$, which 
implies (again by \eqref{eq:MNindex}) that the entropy is decreasing on $I_{\tau_r}(p)$;
 \item if the entropy is decreasing on $I_p$, then $\llbracket p \rrbracket < 0$, hence $\llbracket \tau_r(p) \rrbracket > 0$ and 
the entropy is increasing on $I_{\tau_r}(p)$;
 \item if the entropy is constant on $I_p$, then $\llbracket p \rrbracket = 0$, hence $\llbracket \tau_r(p) \rrbracket = 0$ and 
the entropy is constant on $I_{\tau_r}(p)$.
\end{enumerate}
If, instead, the entropy is decreasing on $I_r$, then $\llbracket r \rrbracket > 0$, hence $\llbracket \tau_r(p) \rrbracket$ and 
$\llbracket p \rrbracket$ have the same sign, which similarly to the previous case implies that the 
monotonicity of entropy on $I_p$ and $I_{\tau_r(p)}$ is the same.\qed

\begin{remark} \label{neutral}
The same argument as in the proof of Theorem \ref{main} shows that, if $r \in \mathbb{Q}_E$ with $\llbracket r \rrbracket = 0$, 
then the entropy on $I_{\tau_r(p)}$ is constant for each $p \in \mathbb{Q}_E$ (no matter what the monotonicity is on $I_p$). 
\end{remark}

\section{Plateaux: proof of Theorem \ref{plateaux}}\label{hoelder}

The goal of this section is to prove Theorem \ref{last}, 
which characterizes the plateaux of the entropy and has as a consequence 
Theorem \ref{plateaux} in the introduction. Meanwhile, we introduce the sets 
of \emph{untuned parameters} (subsection \ref{ss:ut}) and \emph{dominant parameters} (subsection \ref{ss:dom})
which we will use in the proof of the Theorem (subsection \ref{ss:proof}).

\subsection{The importance of being H\"older}
The first step in the proof of Theorem \ref{plateaux} is proving that the entropy function 
$h(\alpha)$ is indeed constant on neutral tuning windows:

\begin{proposition}\label{neutraltuning}
Let $r \in \mathbb{Q}_E$ generate a neutral maximal interval,
i.e. $\llbracket r \rrbracket = 0$.  Then the entropy function
$h(\alpha)$ is constant on $\overline{W_r}$.
\end{proposition}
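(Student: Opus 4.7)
I would combine three ingredients: constancy of $h$ on each component of $W_r\setminus\EE$, thinness of $\EE\cap\overline{W_r}$ in Hausdorff dimension, and Hölder regularity of $h$. The conceptual point is that a Hölder continuous function cannot ``store'' variation on a set whose Hausdorff dimension is below its exponent, so being constant on each component of a dense open complement forces global constancy.

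\textbf{Step 1 (local constancy).} By Proposition \ref{qrenorm}(ii), every maximal quadratic interval contained in $W_r$ is of the form $I_{\tau_r(p)}$ for some $p\in\QQ_E$. Lemma \ref{ss} gives $\llbracket\tau_r(p)\rrbracket=-\llbracket r\rrbracket\llbracket p\rrbracket=0$, and hence by Proposition \ref{Nakada} (equivalently Remark \ref{neutral}) the function $h$ is constant on each such $I_{\tau_r(p)}$. Therefore $h$ is locally constant on the open set $\overline{W_r}\setminus(\EE\cap\overline{W_r})$, which is dense in $\overline{W_r}$ because $\EE$ has Lebesgue measure zero.

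\textbf{Step 2 (size of the residual set).} By Proposition \ref{qrenorm}(i), $\EE\cap\overline{W_r}\subseteq\tau_r(\EE)\cup\{\alpha_0\}$, and Remark \ref{inclusion} places this set inside $\BB(\omega)$, where $\omega>0$ is the left endpoint of $W_r$. Applying the standard Cantor-set machinery from \eqref{dimbounds} to the alphabet $\{S_0,S_1\}$ of Proposition \ref{concat} (and, more uniformly, to the sets $\BB(t)$ studied in \cite{CT2}) yields a bound $\dim_H(\EE\cap\overline{W_r})\le\gamma$ for some explicit $\gamma<1$ depending on $\omega$.

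\textbf{Step 3 (Hölder bridge and conclusion).} The remaining input, which is presumably the content of the subsection whose title gives this section its motto, is that $h$ is $\beta$-Hölder continuous on $[\omega,1]$ for some exponent $\beta>\gamma$. Given this, I would invoke the following elementary fact: if $f\colon J\to\RR$ is $\beta$-Hölder on a closed interval $J$, is locally constant on $J\setminus E$, and $\mathcal{H}^\beta(E)=0$, then $f$ is constant on $J$. The proof is a one-line covering argument: for $x<y$ in $J$, any cover of $E\cap[x,y]$ by intervals $\{U_i\}$ gives $|f(y)-f(x)|\le\sum C\,|U_i|^\beta$, which can be made arbitrarily small, while contributions from $J\setminus E$ vanish by local constancy. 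Applied to $J=\overline{W_r}$ and $E=\EE\cap\overline{W_r}$, this concludes the proof.

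\textbf{Main obstacle.} The delicate point is securing a Hölder exponent $\beta$ that strictly exceeds $\gamma=\dim_H(\EE\cap\overline{W_r})$. Both quantities degenerate near $\alpha=0$ (the exponent of $h$ tends to $0$ while $\dim_H(\EE\cap[\varepsilon,1])\to 1$ as $\varepsilon\to 0$), so the estimates must be uniform in $\omega>0$ but cannot be uniform in $r$. One possible safeguard, should a single $\beta>\gamma$ not be directly available for some $W_r$ with $\omega$ too small, is to bootstrap using the nested structure of tuning windows (Lemma \ref{nested}): writing $W_r$ as a union of subwindows and propagating the constancy statement inwards from larger windows where the inequality $\beta>\gamma$ is manifest. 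Making this compatible with the Hölder estimates for $h$ is the technical heart of the proof.
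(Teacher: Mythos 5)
Your overall strategy is exactly the paper's: local constancy on $W_r\setminus\EE$ via Remark \ref{neutral}, a H\"older estimate for $h$, and a Hausdorff dimension bound on $\EE\cap \overline{W_r}$, glued together by the principle that a H\"older function which is locally constant off a set of dimension below its exponent must be constant. Your Step 1 is correct and identical to the paper's. Your Step 3 lemma is also correct, and your covering-argument proof of it is a legitimate (arguably more elementary) alternative to the paper's, which instead argues that a non-constant $f$ would force $\textup{H.dim } f(C)=1$ while H\"older continuity gives $\textup{H.dim } f(C)\le \textup{H.dim } C/\eta<1$.

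The genuine gap is in Step 2, and you have correctly located it yourself in your ``main obstacle'' paragraph --- but the obstacle is not real, and the bootstrap you sketch to get around it is both vague and unnecessary. The point you are missing is that the two-letter alphabet $\Sigma=\{S_0,S_1\}$ gives a dimension bound that is \emph{uniform in $r$} and, crucially, \emph{strictly below $1/2$}, not merely some $\gamma<1$ depending on $\omega$. Concretely (Lemma \ref{TW}): pass to the alphabet $\Sigma_2=\{S_iS_j\}_{i,j\in\{0,1\}}$ of four words; since $S_0$ has even length $\ge 2$ and $S_1$ odd length $\ge 1$, the denominators satisfy $q(S_iS_j)\ge q(Z_iZ_j)=5$ where $Z_0=(1,1)$, $Z_1=(2)$ (the extremal case $r=1/2$), so by \eqref{contraction} every branch contracts by at most $1/25$, and \eqref{dimbounds} yields
$$\textup{H.dim}\bigl(\EE\cap W_r\bigr)\le \textup{H.dim } K(\Sigma)\le \frac{\log 4}{\log 25}=\frac{\log 2}{\log 5}<\frac12 .$$
On the other side, Theorem \ref{hexp} makes $h$ locally H\"older of \emph{any} exponent $\eta<1/2$ on $(0,1]$, and $\overline{W_r}$ is a compact subset of $(0,1]$; choosing $\eta\in(\log 2/\log 5,\,1/2)$ gives $\beta>\gamma$ for every tuning window at once. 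So the degeneration of the H\"older exponent near $\alpha=0$ and of $\textup{H.dim}(\EE\cap[\varepsilon,1])$ as $\varepsilon\to 0$, which worries you, never enters: the relevant set is not $\BB(\omega)$ but the much thinner $K(\Sigma)$, whose dimension is controlled by the combinatorics of a two-symbol substitution independently of where $W_r$ sits. With this quantitative input your argument closes without any nested-window bootstrapping.
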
 

By Remark \ref{neutral}, we already know that 
the entropy is locally constant on all connected components of $W_r\setminus \EE$, 
which has full measure in $W_r$.
However, since $W_r\cap \EE$ has, in general, positive Hausdorff
dimension, in order to prove that the entropy is actually constant on the whole $W_r$ 
one needs to exclude a devil staircase behaviour. We shall exploit the following criterion:

\begin{lemma} \label{extend}
Let $f : I \rightarrow \mathbb{R}$ be a H\"older-continuous function of exponent $\eta \in (0,1)$, and assume that
there exists a closed set $C \subseteq I$  such that $f$ is locally constant at all $x\notin C$. Suppose moreover
$\textup{H.dim }C < \eta$. Then $f$ is constant on $I$.
\end{lemma}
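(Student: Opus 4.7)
The plan is to show that the image $f(I)$ has Lebesgue measure zero; since $f$ is H\"older, it is continuous, so $f(I)$ is an interval, and the only intervals of measure zero are singletons. Thus constancy follows. The image will be split as $f(I) = f(I\setminus C) \cup f(C)$, and each piece will be handled separately.

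First I would treat $f(I\setminus C)$. Since $C$ is closed in $I$, the complement $I\setminus C$ is open in $\mathbb{R}$, so it decomposes as a countable disjoint union of open intervals $(J_k)_{k\in\NN}$. On each $J_k$, the function $f$ is locally constant by hypothesis; since $J_k$ is connected, $f$ is constant on $J_k$. Therefore $f(I\setminus C) = \{c_k : k \in \NN\}$ is at most countable, and in particular has Lebesgue measure zero.

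Next I would estimate $f(C)$ using the H\"older hypothesis. The standard fact (see Falconer \cite{F}, Proposition 2.3) is that if $g$ satisfies $|g(x)-g(y)| \leq K|x-y|^\eta$, then for any $A$ one has $\textup{H.dim}\, g(A) \leq \frac{1}{\eta}\textup{H.dim}\, A$. Applying this to $f$ restricted to $C$ gives
\[
\textup{H.dim}\, f(C) \;\leq\; \frac{\textup{H.dim}\, C}{\eta} \;<\; 1,
\]
so $f(C)$ has Hausdorff dimension strictly less than $1$ and therefore Lebesgue measure zero. Combining the two bounds, $f(I)$ is a Lebesgue-null subset of $\mathbb{R}$; since $f$ is continuous and $I$ is connected, $f(I)$ is an interval of measure zero, hence a single point, which is the claim.

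There is essentially no hard step here: the only thing to be careful about is the measurability/dimension bookkeeping, in particular invoking the H\"older-image dimension inequality with the correct direction (dimension of image bounded above by dimension of domain divided by $\eta$, not multiplied). The hypothesis $\textup{H.dim}\, C < \eta$ is precisely what is needed to push $\textup{H.dim}\, f(C)$ strictly below $1$, which is the threshold at which an interval is forced to collapse to a point.
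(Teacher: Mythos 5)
Your proof is correct and rests on the same two pillars as the paper's: constancy of $f$ on the (countably many) connected components of $I\setminus C$, and the H\"older dimension-distortion inequality $\textup{H.dim}\,f(C)\leq \textup{H.dim}\,C/\eta$ from Falconer. The only difference is cosmetic — you conclude directly that $f(I)$ is a Lebesgue-null interval, while the paper argues by contradiction that a non-degenerate interval $f(I)=f(C)$ would force $\textup{H.dim}\,C\geq\eta$.
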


%{ \bf Proof of lemma \ref{extend}.}
\begin{proof}
% We will prove that if condition (i) does not hold, then (ii) must be true. If (i) fails, i.e. 
Suppose $f$ is not constant: then by continuity $f(I)$ is an interval with non-empty interior, hence $\textup{H.dim } f(I)=1$. 
On the other hand, we know $f$ is constant on the connected components of $I\setminus C$, so we get $f(I)=f(C)$, whence
$$\textup{H.dim }f(C)=\textup{H.dim }f(I)=1.$$
But, since $f$ is $\eta$-H\"older continuous, we also get (e.g.  by \cite{F}, prop. 2.3)
$$\textup{H.dim }f(C)\leq \frac{ \textup{H.dim }C }{\eta}$$
and thus  $\eta \leq \textup{H.dim }C$, contradiction. 
\end{proof}

Let us know check the hypotheses of Lemma \ref{extend} are met in our case; the first one is given by the following 

\begin{theorem}[\cite{T}] \label{hexp}
For all fixed $0< \eta < 1/2$, the function $\alpha \mapsto h(\alpha)$ is locally H\"older-continuous of 
exponent $\eta$ on $(0,1]$.
\end{theorem}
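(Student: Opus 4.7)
The plan is to use the natural-extension / Rokhlin-formula approach together with an orbit-stability estimate, balancing symbolic agreement against exponential expansion, to get any Hölder exponent below $1/2$.

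First I would recall that, for every $\alpha \in (0,1]$, the map $T_\alpha$ admits a natural extension $\overline{T}_\alpha$ on a planar domain $\Omega_\alpha \subset \RR^2$, with invariant measure obtained by restricting the Gauss measure $\frac{dx\,dy}{(1+xy)^2}$ to $\Omega_\alpha$. Combining Rokhlin's formula with the explicit form of $\log|T_\alpha'|$ and integration along fibers yields a universal identity
$$h(\alpha) \;=\; \frac{\pi^2/6}{\overline{\mu}(\Omega_\alpha)},$$
where $\overline{\mu}$ is the 2-D Gauss measure. Since $\overline{\mu}(\Omega_\alpha)$ is uniformly bounded away from $0$ and $\infty$ on every compact subinterval of $(0,1]$, the claim reduces to showing that $\alpha \mapsto \overline{\mu}(\Omega_\alpha)$ is locally Hölder of exponent $\eta$ for every $\eta < 1/2$.

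Next I would use the explicit description, essentially due to Nakada and Kraaikamp, of $\Omega_\alpha$ as a countable union of rectangles whose upper and lower edges sit at heights determined by the forward $T_\alpha$-orbits of the two endpoints $\alpha$ and $\alpha-1$. Setting $\alpha_n:=T_\alpha^n(\alpha)$ and $\alpha_n':=T_\alpha^n(\alpha-1)$, the measure $\overline{\mu}(\Omega_\alpha)$ is a convergent series whose $n$-th term depends in a $C^1$ way on $\alpha_n,\alpha_n'$ and whose tail decays geometrically (as the heights $y$ tend to $0$ the weight $\tfrac{dy}{(1+xy)^2}$ integrates against a small strip). So it suffices to control $|\alpha_n(\alpha)-\alpha_n(\beta)|$ and $|\alpha_n'(\alpha)-\alpha_n'(\beta)|$ as functions of $|\alpha-\beta|$, truncated at an appropriate symbolic depth.

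The core estimate is then the following shadowing-type lemma. If $|\alpha-\beta|$ is small, the symbolic codings of the endpoint orbits under $T_\alpha$ and $T_\beta$ agree for the first $n$ steps as long as $n$ does not exceed a critical depth $N=N(|\alpha-\beta|)$; up to step $N$, the difference of the orbits is controlled by the Lipschitz constant of the inverse branches, which by \eqref{contraction} satisfies $|f_S'|\le q(S)^{-2}$, and the "bad" contribution beyond step $N$ is controlled by the geometric decay of $\overline{\mu}$-mass of the deep cylinders. Choosing $N \asymp \log(1/|\alpha-\beta|)$ and optimizing balances the two competing terms; the exponent $1/2$ emerges because the denominators $q(S)$ square in \eqref{contraction}, so any $\eta<1/2$ is achievable but $\eta=1/2$ is borderline. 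Summing the Hölder bounds across all rectangles (using the geometric tail for convergence) yields the desired estimate on $\overline{\mu}(\Omega_\alpha)$.

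The main obstacle is exactly this balancing step: a priori the branches of $T_\alpha$ can be arbitrarily expanding, so one must carefully truncate the orbit comparison at depth $N(|\alpha-\beta|)$ and show that the "tail of the symbolic comparison" and the "discrepancy built up in the first $N$ iterates" are of comparable order, with both of order $|\alpha-\beta|^\eta$ for any $\eta<1/2$. Once this is in place, turning the orbit Hölder estimates into an area Hölder estimate, and then into an estimate on $h(\alpha)=\frac{\pi^2/6}{\overline{\mu}(\Omega_\alpha)}$, is a straightforward application of the chain rule and the positivity and boundedness of $\overline{\mu}(\Omega_\alpha)$ on compacts of $(0,1]$.
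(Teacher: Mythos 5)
First, a caveat about the comparison: Theorem~\ref{hexp} is not proved in this paper at all --- it is imported verbatim from \cite{T} --- so the only thing to measure your sketch against is the argument of that reference, which does indeed run through the natural extension, the identity $h(\alpha)\,\overline{\mu}(\Omega_\alpha)=\pi^2/6$, and a H\"older estimate on the dependence of the domain on $\alpha$. Your overall architecture is therefore the right one, and you correctly locate the source of the exponent $1/2$ in the squaring of continuants in \eqref{contraction}. But one of your reductions conceals a theorem: the identity $h(\alpha)=\frac{\pi^2/6}{\overline{\mu}(\Omega_\alpha)}$ on all of $(0,1]$ is not a routine consequence of Rokhlin's formula plus fiber integration --- one must first exhibit $\Omega_\alpha$ and prove that the induced planar map really is a natural extension, which for small $\alpha$ (where the domain is an infinite union of steps governed by the full endpoint orbits) is the main content of \cite{KSS}; citing it is fine, deriving it in one sentence is not.

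The genuine gap is the ``shadowing-type lemma''. The claim that the symbolic codings of the endpoint orbits under $T_\alpha$ and $T_\beta$ agree for the first $N$ steps, with $N=N(|\alpha-\beta|)\to\infty$ as $|\alpha-\beta|\to 0$, is false, and its failure is precisely the subject of this paper: the bifurcation set $\mathcal{E}$ is by definition the locus where the symbolic orbits of $\alpha$ and $\alpha-1$ are unstable. At every endpoint of a maximal quadratic interval $I_r$ the orbit of an endpoint meets a discontinuity of $T_\alpha$ after finitely many steps, so for $\beta$ on the other side of that endpoint the codings already disagree at a \emph{bounded} step no matter how small $|\alpha-\beta|$ is; since such parameters are dense, no uniform lower bound for $N$ in terms of $|\alpha-\beta|$ alone exists, and your two error terms cannot be balanced pointwise. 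The estimate that actually works never asserts symbolic agreement: one bounds $\overline{\mu}(\Omega_\alpha\,\triangle\,\Omega_\beta)$ directly, using (a) that at a symbolic bifurcation the rectangles created or destroyed have zero width, so their mass is small near the transition even though the coding jumps, and (b) that the total mass carried by the part of the orbit with continuant $q_n\geq Q$ decays like $Q^{-1}$, so that truncating at $q_N\asymp|\alpha-\beta|^{-1/2}$ --- note: at continuant size, not at depth $N\asymp\log(1/|\alpha-\beta|)$, since continuants of bounded depth can be arbitrarily large --- yields the exponent $\eta<1/2$. Without replacing your shadowing lemma by such a mass estimate, the argument breaks down on a dense set of parameters and the H\"older constant cannot be made locally uniform.
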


We are now left with checking that the Hausdorff dimension of $\mathcal{E} \cap W_r$ is small enough:

\begin{lemma}\label{TW}  %%%cc l'ho cambiato!!
For all $r \in \QQ_E$, 
$$\textup{H.dim } \EE \cap W_r \leq \frac{\log 2}{\log 5}<1/2.$$
%%cc era \frac{\log 4}{\log 25}
\end{lemma}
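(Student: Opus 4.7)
My plan is to show that $\EE \cap W_r$ sits inside a regular Cantor set whose Hausdorff dimension I can control by passing to a two-step alphabet, since the one-step estimate is not sharp enough.

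From Remark \ref{inclusion} together with Proposition \ref{concat} I obtain
$$
\EE \cap W_r \subseteq \BB(\omega) \cap [\omega, \alpha_0] = K(\Sigma), \qquad \Sigma := \{S_0, S_1\},
$$
so it suffices to bound $\textup{H.dim } K(\Sigma)$. Regrouping any infinite concatenation of letters of $\Sigma$ into consecutive pairs identifies $K(\Sigma) = K(\widetilde\Sigma)$, where
$$
\widetilde\Sigma := \{S_0 S_0,\, S_0 S_1,\, S_1 S_0,\, S_1 S_1\}.
$$
This four-word alphabet is non-redundant: since $|S_0|$ and $|S_1|$ differ by one and the two expansions of $r$ already disagree before the end of the shorter one, no word of $\widetilde\Sigma$ is a prefix of another.

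The heart of the argument is estimating $m_2 := \sup_{V \in \widetilde\Sigma,\, x\in[0,1]} |f_V'(x)|$. Writing $q_\ast(S)$ for the penultimate convergent denominator of the string $S$, the matrix product presentation of continued fraction strings gives, for $V = S_i S_j$,
$$
q(V) = q_\ast(S_i)\, p(S_j) + q(S_i)\, q(S_j) \geq 1\cdot 1 + 2\cdot 2 = 5,
$$
since $q_\ast(S), p(S) \geq 1$ for every nonempty string and $q(S_0) = q(S_1) = q(r) \geq 2$ because $r \in (0,1) \cap \QQ$. Combined with \eqref{contraction} this yields $m_2 \leq 1/q(V)^2 \leq 1/25$, and plugging into \eqref{dimbounds} with $N = 4$ gives
$$
\textup{H.dim } \EE \cap W_r \leq \textup{H.dim } K(\widetilde\Sigma) \leq \frac{\log 4}{\log 25} = \frac{\log 2}{\log 5} < \frac{1}{2}.
$$

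The subtle point, and the reason a two-step alphabet is unavoidable, is that applying \eqref{dimbounds} directly to $\Sigma$ yields only $\log 2/(2\log q(r))$, which equals exactly $1/2$ in the worst case $r = 1/2$ and therefore fails to give the strict inequality $< 1/2$. The cross-term $q_\ast(S_i)\, p(S_j) \geq 1$ produced by the matrix multiplication strictly pushes $q(V)$ above the bare product $q(S_i)q(S_j) \geq 4$, and this extra $+1$ is precisely what carries the bound past $\log 2/\log 5$.
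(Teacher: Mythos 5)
Your proof is correct and follows essentially the same route as the paper: reduce to the regular Cantor set $K(\Sigma)$, $\Sigma=\{S_0,S_1\}$, via Remark \ref{inclusion} and Proposition \ref{concat}, pass to the alphabet of two-letter blocks $S_iS_j$, show $q(S_iS_j)\geq 5$, and conclude by \eqref{contraction} and \eqref{dimbounds}. The only (immaterial) difference is that you derive $q(S_iS_j)\geq 5$ from the matrix-product identity $q(ST)=q_*(S)p(T)+q(S)q(T)$, whereas the paper obtains it by monotonicity of denominators, comparing with the minimal strings $Z_0=(1,1)$ and $Z_1=(2)$.
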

%Let us give the proofs of these claims.

%{ \bf Proof of lemma \ref{TW}.}  
\begin{proof}
Let $r\in \QQ_E$, $r=[0;S_0]=[0;S_1]$ and $\overline{W_r}=[\omega, \alpha]$. By Remark \ref{inclusion} and Proposition \ref{concat}, 
$$\EE \cap W_r \subset \BB(\omega) \cap [\omega,\alpha]=K(\Sigma), \ \ \ \ \mbox{ with } \ \ \ \Sigma = \{ S_0, S_1\}.$$
Note we also have 
$K(\Sigma) = K(\Sigma_2)$ with $\Sigma_2 = \{S_0S_0,S_1S_0,S_1S_0,S_1S_1\}$ and, by virtue of \eqref{contraction} we have the estimate 
$$ |f'_{S_iS_j}(x)|\leq \frac{1}{q(S_iS_j)^2}, \ \ \ i,j \in \{0,1\}.$$
On the other hand, setting $Z_0=(1,1)$ and $Z_1=(2)$ we can easily check that 
$$ q(S_iS_j)\geq  q(Z_iZ_j)=5 \ \ \ \forall i,j \in \{0,1\};$$
whence $|f'_{S_iS_j}(x)|\leq \frac{1}{25}$ and, by formula \eqref{dimbounds}, we get our claim.  
%In \cite{CT2} it is proved that
%$HD(\EE\cap W_r)\leq HD(\EE \cap [g^2, g])$, therefore lemma \ref{TW}
%boils down to checking 
\end{proof}
Proposition \ref{neutraltuning} now follows from Lemma \ref{extend}, Theorem \ref{hexp} and Lemma \ref{TW}.

%The second step to complete the proof of Theorem \ref{plateaux} is to show that maximal tuning windows are actually 
%plateaux, i.e. maximal open sets over which the entropy is constant.

\subsection{Untuned parameters} \label{ss:ut}

The set of \emph{untuned parameters} is the complement of all tuning windows:
$$UT := [0, g] \setminus \bigcup_{r \in \mathbb{Q} \cap (0, 1)} W_r$$

Note that, since $I_r \subseteq W_r$, $UT \subseteq \mathcal{E}$. 
Moreover, we say that a rational $a\in \QQ_E$ is {\em untuned} if it cannot be written as 
$a=\tau_r(a_0)$ for some $r,a_0 \in \QQ_E$.
%{\em tuned} if $a=\tau_r(a_0)$ for
%some $r,a_0 \in \QQ_E$; $a\in \QQ_E$ is {\em untuned} if it is not tuned. 
We shall denote by $\QUT$ the set of all $a\in \QQ_E$ which are untuned.
Let us start out by seeing that each pseudocenter of a maximal quadratic interval admits an ``untuned factorization'':

\begin{lemma} \label{factorization}
Each $r \in \mathbb{Q}_E$ can be written as:
\begin{equation}\label{eq:utfactor}
r = \tau_{r_m} \circ \dots \circ \tau_{r_1}(r_0),  \ \ \ \ \textup{with } r_i \in \QUT \ \forall i\in \{0,1,...,m\}.
\end{equation}
Note that $m$ can very well be zero (when $r$ is already untuned).
\end{lemma}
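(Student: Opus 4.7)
The plan is to proceed by strong induction on the denominator $q(r)$ of $r \in \QQ_E$. For the base case, if $r \in \QUT$, I take $m = 0$ and $r_0 = r$. For the inductive step with $r \notin \QUT$, the idea is to peel off a single \emph{outermost} tuning $\tau_{s^*}$ with $s^*$ itself untuned, reducing the problem to a factorization of some $r' \in \QQ_E$ of strictly smaller denominator.

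To locate $s^*$, I would consider the family $\mathcal{F}(r) := \{ s \in \QQ_E : r \in W_s \}$. This set is nonempty by Proposition \ref{qrenorm}(ii): since $r \notin \QUT$, we may write $r = \tau_s(r')$ for some $s, r' \in \QQ_E$, and then $r \in \tau_s(\QQ_E) \subset W_s$. Lemma \ref{nested} ensures that any two elements of $\mathcal{F}(r)$ give nested tuning windows, so $\mathcal{F}(r)$ is totally ordered by inclusion. Next, I would show $\mathcal{F}(r)$ is finite: an ascending chain $W_{s_1} \subsetneq W_{s_2} \subsetneq \dots$ corresponds, via Lemma \ref{nested}(ii), to a sequence of nontrivial tunings $s_i = \tau_{s_{i+1}}(p_{i+1})$, which forces strictly decreasing denominators $q(s_1) > q(s_2) > \dots$; as these are positive integers, the chain must terminate. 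Let $W_{s^*}$ be the resulting maximal element. The same implication shows $s^*$ is untuned, since otherwise $s^* = \tau_t(q)$ with $t \in \QQ_E \setminus \{s^*\}$ would force $W_{s^*} \subsetneq W_t$ by Lemma \ref{nested}, contradicting maximality.

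Having chosen $s^* \in \QUT$, Proposition \ref{qrenorm}(ii) provides a unique $r' \in \QQ_E$ with $r = \tau_{s^*}(r')$. The strict inequality $q(r') < q(r)$ (again, denominator growth under tuning) allows the inductive hypothesis to apply to $r'$: thus $r' = \tau_{r_{m-1}} \circ \dots \circ \tau_{r_1}(r_0)$ with each $r_i \in \QUT$, and setting $r_m := s^*$ gives the desired decomposition of $r$.

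The main obstacle I foresee is the auxiliary denominator-growth estimate $q(\tau_s(p)) > q(p)$ for $s, p \in \QQ_E$, on which both the finiteness of $\mathcal{F}(r)$ and the descent $q(r') < q(r)$ depend. This should be verified by unravelling the definition of $\tau_s$, which prepends $S_1$ and intersperses further copies of $S_0$ and $S_1$ into the continued fraction expansion of $p$, and then invoking the fractional-linear representation \eqref{eq:fractrans}: the matrix product corresponding to the longer expansion has strictly larger denominator entry than that of $p$ alone. The argument is routine but is the one calculation in the proof that needs actual bookkeeping.
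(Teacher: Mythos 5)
Your argument is correct, but it is genuinely different from the paper's. The paper inducts on the digit sum $\|r\|_1 := \sum a_i$, which is \emph{exactly} multiplicative under tuning ($\|\tau_p(s)\|_1 = \|p\|_1\|s\|_1$, so $\max(\|p\|_1,\|s\|_1)\leq \|\tau_p(s)\|_1/2$): it peels off an \emph{arbitrary} factorization $r=\tau_p(s)$, recursively factors both $p$ and $s$, and then reassembles using the associativity identity $\tau_{\tau_p(r)}=\tau_p\circ\tau_r$. You instead induct on the denominator and go straight for the \emph{outermost} window containing $r$, whose pseudocenter is automatically untuned; this sidesteps associativity entirely (you only ever compose maps, never rewrite $\tau_p$ for a tuned subscript $p$) at the cost of invoking Lemma \ref{nested} and of establishing denominator growth — in effect you prove a piece of Proposition \ref{cantorut} along the way. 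Two remarks on the part you flagged. First, the estimate you actually need is two-fold: the finiteness of $\mathcal{F}(r)$ uses $q(\tau_s(p))>q(s)$ (growth in the \emph{outer} argument along the chain $s_i=\tau_{s_{i+1}}(p_{i+1})$), while the descent uses $q(\tau_s(p))>q(p)$; your closing paragraph attributes both to the latter inequality. Second, both do follow from the supermultiplicativity of denominators under concatenation, $q(AB)\geq q(A)q(B)$ (immediate from the matrix form \eqref{eq:fractrans}), which gives $q(\tau_s(p))\geq q(S_1)^{n}q(S_0)^{\sum(a_i-1)}=q(s)^{\|p\|_1}\geq \max\bigl(q(s)^2, 2^{\|p\|_1}\bigr)$, and $2^{\|p\|_1}\geq \prod(a_i+1)>q(p)$ for expansions of length $\geq 2$; so the calculation is indeed routine, just slightly more than "bookkeeping" on the matrix product. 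The paper's norm $\|\cdot\|_1$ is arguably the slicker invariant precisely because it turns this supermultiplicative estimate into an exact identity.
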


\begin{proof}
A straightforward check shows that the tuning operator has the following
associativity property:
\begin{equation}\label{eq:associativity}
\tau_{\tau_p(r)}(x)=\tau_p \circ \tau_r (x)  \qquad \forall p,r \in \QQ_E,  \ x\in (0,1)
\end{equation}
For $s=[0;a_1,...,a_m]\in \QQ_E$ we shall set $\|s\|_1:=\sum_{1}^{m}a_i$; this definition does not depend on the representation 
of $s$, moreover
$$\|\tau_p(s)\|_1=\|p\|_1 \|s\|_1  \qquad \forall p,s \in \QQ_E$$
The proof of \eqref{eq:utfactor} follows then easily by induction on $N=\|r\|_1$, using the fact that
$\max (\|p\|_1,\|s\|_1)\leq \|\tau_p(s)\|_1 /2$. 
\end{proof}

As a consequence of the following proposition, the connected components of the complement of $UT$ 
are precisely the tuning windows generated by the elements of $\mathbb{Q}_{UT}$:
 
\begin{proposition} \label{cantorut} 
\begin{itemize} The set $\overline{UT}$ is a Cantor set: indeed, 
 \item[(i)]
$$UT = [0, g] \setminus \bigcup_{r \in \mathbb{Q}_{UT}} W_r;$$
\item[(ii)] 
if $r, s \in \mathbb{Q}_{UT}$ with $r \neq s$, then $\overline{W_r}$ and $\overline{W_s}$ are disjoint;
\item[(iii)]
if $x \in \overline{UT} \setminus UT$, then there exists $r \in \mathbb{Q}_{UT}$ such that $x = \tau_r(0)$.
\end{itemize}
\end{proposition}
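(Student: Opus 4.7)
The plan is to prove (i), (ii), (iii) in sequence, using Lemma \ref{factorization} (untuned factorization) and Lemma \ref{nested} (nesting of tuning windows) as the principal tools.

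For (i), the inclusion $UT \supseteq [0,g] \setminus \bigcup_{r \in \QUT} W_r$ is immediate since $\QUT \subseteq \mathbb{Q}_E$. For the reverse inclusion, I would take any $r \in \mathbb{Q}_E$ and use Lemma \ref{factorization} to write $r = \tau_{r_m} \circ \cdots \circ \tau_{r_1}(r_0)$ with all $r_i \in \QUT$. If $m = 0$ then $r \in \QUT$ already; if $m \geq 1$, set $q := \tau_{r_{m-1}} \circ \cdots \circ \tau_{r_1}(r_0) \in \mathbb{Q}_E$, so $r = \tau_{r_m}(q)$, and Lemma \ref{nested} gives $\overline{W_r} \subseteq W_{r_m}$. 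Either way $W_r$ is absorbed by a tuning window based at an untuned rational, so the two unions coincide. For (ii), suppose by contradiction that distinct $r, s \in \QUT$ satisfy $\overline{W_r} \cap \overline{W_s} \neq \emptyset$; without loss of generality $r < s$, and Lemma \ref{nested} then yields $r = \tau_s(p)$ for some $p \in \mathbb{Q}_E$, contradicting $r \in \QUT$.

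For (iii), let $x \in \overline{UT} \setminus UT$. Since $x \notin UT$, part (i) places $x$ in some $W_r = [\omega(r), \alpha_0(r))$ with $r \in \QUT$. Meanwhile $x$ is the limit of a sequence $x_n \in UT$, and $UT$ is disjoint from $W_r$, so each $x_n$ lies either strictly below $\omega(r)$ or at or above $\alpha_0(r)$. Because $x < \alpha_0(r)$, only $x_n < \omega(r)$ can happen for infinitely many $n$, forcing $x = \omega(r) = \tau_r(0)$.

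I expect no real obstacle: the substance of the proposition is already packaged in Lemmas \ref{factorization} and \ref{nested}, leaving only topological bookkeeping for (iii). The subtlest observation, used implicitly there, is that $\alpha_0(r) \notin \overline{UT} \setminus UT$: indeed $\alpha_0(r)$ lies outside $W_r$ (which is right-open) and, by (ii) just proved, outside every other $W_s$ with $s \in \QUT$, hence belongs to $UT$ itself and cannot be one of the extra limit points.
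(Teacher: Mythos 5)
Your proof is correct and follows essentially the same route as the paper's: (i) via the untuned factorization (Lemma \ref{factorization}) combined with the nesting Lemma \ref{nested}, (ii) directly from Lemma \ref{nested}, and (iii) by identifying the extra limit points of $UT$ as the left endpoints $\tau_r(0)$ of untuned tuning windows (your version of (iii) is in fact more detailed than the paper's one-line argument, and your closing observation that $\alpha_0(r)\in UT$ is a correct sanity check). The only slip is cosmetic: in (i) the \emph{immediate} inclusion is $UT \subseteq [0,g]\setminus\bigcup_{r\in\QUT}W_r$ (removing fewer windows leaves a larger set), and it is the reverse inclusion that your factorization argument establishes, so the two labels should be swapped; the substance is unaffected.
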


\begin{proof}
(i). It is enough to prove that every tuning window $W_r$ is contained
  in a tuning window $W_s$, with $s \in \mathbb{Q}_{UT}$. Indeed, let
  $r \in \mathbb{Q}_E$; either $r \in \mathbb{Q}_{UT}$ or, by Lemma
  \ref{factorization}, there exists $p \in \mathbb{Q}_E$ and $s \in
  \mathbb{Q}_{UT}$ such that $r = \tau_s(p)$, hence $W_r \subseteq
  W_s$.
 
(ii). By Lemma \ref{nested}, if the closures of $W_r$ and $W_s$ are
  not disjoint, then $r = \tau_s(p)$, which contradicts the fact $r
  \in \mathbb{Q}_{UT}$.

(iii). By (i) and (ii), $\overline{UT}$ is a Cantor set, and each element $x$ which 
belongs to $\overline{UT} \setminus UT$ 
is the left endpoint of some tuning window $W_r$ with $r \in \QUT$, which 
is equivalent to say $x = \tau_r(0)$.
\end{proof}

\begin{lemma} \label{HDUT}
The Hausdorff dimension of $UT$ is full:
$$\textup{H.dim }UT = 1$$
\end{lemma}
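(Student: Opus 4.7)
My plan is to reduce Lemma \ref{HDUT} to the assertion $\textup{H.dim}\,\mathcal{E}=1$, by showing that the removed part $\mathcal{E}\setminus UT$ has strictly smaller Hausdorff dimension.

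First I would note the inclusion $UT\subseteq \mathcal{E}$: since $I_r\subseteq W_r$ for every $r\in\mathbb{Q}_E$, removing all the $W_r$'s from $[0,g]$ already removes $[0,g]\setminus \mathcal{E}$. Combining with Proposition \ref{cantorut}(i) gives
$$\mathcal{E}\setminus UT\;=\;\bigcup_{r\in \mathbb{Q}_{UT}}\bigl(\mathcal{E}\cap W_r\bigr),$$
a \emph{countable} union. By Lemma \ref{TW}, each summand satisfies $\textup{H.dim}(\mathcal{E}\cap W_r)\leq \log 2/\log 5$, and countable stability of Hausdorff dimension yields $\textup{H.dim}(\mathcal{E}\setminus UT)\leq \log 2/\log 5<1/2$. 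Applying the same stability to $\mathcal{E}=UT\cup(\mathcal{E}\setminus UT)$ gives
$$\textup{H.dim}\,\mathcal{E}\;=\;\max\bigl\{\textup{H.dim}\,UT,\; \log 2/\log 5\bigr\},$$
so the whole problem reduces to showing $\textup{H.dim}\,\mathcal{E}=1$.

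For the latter, the cleanest route is to exhibit, for each large integer $N$, a subset of $\mathcal{E}$ of dimension $\textup{H.dim}\,K(\{1,\dots,N-1\})$. Take any $x=[0;N,a_2,a_3,\dots]$ with $a_i\in\{1,\dots,N-1\}$ for $i\geq 2$: then $x<1/N$, while for every $k\geq 1$ the first digit of $G^k(x)$ is at most $N-1$, forcing $G^k(x)\geq 1/N>x$; hence $x\in\mathcal{E}$. The map $y\mapsto 1/(N+y)$ is bi-Lipschitz on $[0,1]$, so this subset of $\mathcal{E}$ has the same Hausdorff dimension as the regular Cantor set $K(\{1,\dots,N-1\})$, whose dimension tends to $1$ as $N\to\infty$ by classical estimates on continued fractions with restricted digits (e.g.\ \cite{Hen}, or \cite{F}). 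Therefore $\textup{H.dim}\,\mathcal{E}=1$, and together with the previous display this yields $\textup{H.dim}\,UT=1$.

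The main obstacle is conceptual rather than technical: one has to recognise that the implicitly defined set $UT$ still carries essentially all of the Hausdorff dimension of the bifurcation set. This is exactly where Lemma \ref{TW} is decisive, since it tells us that every single piece removed when passing from $\mathcal{E}$ to $UT$ is uniformly thin, with dimension bounded by $\log 2/\log 5$ independently of $r$; without such a uniform bound the countable-union argument would collapse.
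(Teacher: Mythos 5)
Your reduction is exactly the paper's argument: decompose $\mathcal{E}$ as $UT$ together with the countable family $\{\mathcal{E}\cap W_r\}_{r\in\mathbb{Q}_{UT}}$ (via Proposition \ref{cantorut}(i)), invoke countable stability of Hausdorff dimension and the uniform bound $\log 2/\log 5<1/2$ from Lemma \ref{TW}, and conclude from $\textup{H.dim}\,\mathcal{E}=1$. The only point of divergence is the last ingredient: the paper simply cites \cite{CT} for $\textup{H.dim}\,\mathcal{E}=1$, whereas you prove it from scratch by exhibiting, inside $\mathcal{E}$, the bi-Lipschitz images $f_{(N)}\bigl(K(\{1,\dots,N-1\})\bigr)$ and letting $N\to\infty$. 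That construction is correct (the leading digit $N$ forces $x<1/N$ while every shifted tail has first digit at most $N-1$, hence exceeds $1/N$), and it makes the lemma self-contained at the cost of re-deriving a known result; the paper's citation is the shorter route. Either way the proof stands.
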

\begin{proof}
By the properties of Hausdorff dimension,  
$$\textup{H.dim } \mathcal{E} = \max \{ \textup{H.dim }UT, \sup_{r\in
  \mathbb{Q}_{UT}} \textup{H.dim }\mathcal{E}\cap W_r \}$$ Now, by
\cite{CT}, $\textup{H.dim }\mathcal{E}= 1$, and, by Lemma \ref{TW},
$\textup{H.dim }\mathcal{E} \cap W_r < \frac{1}{2}$, hence the claim.
\end{proof}

\subsection{Dominant parameters} \label{ss:dom}

\begin{definition}
A finite string $S$ of positive integers is \emph{dominant} if it has even length and 
$$S = AB << B$$
for any splitting $S = AB$ of $S$ into two non-empty strings $A$, $B$.
\end{definition}

That is to say, dominant strings are smaller than all their proper suffixes. A related definition is 
the following:

\begin{definition}
A quadratic irrational $\alpha \in [0, 1]$ is a \emph{dominant parameter} if its c.f. expansion is of the form 
$\alpha = [0; \overline{S}]$ with $S$ a dominant string.
\end{definition}

For instance, $(2, 1, 1, 1)$ is dominant, while $(2, 1, 1, 2)$ is not (it is not true that 
$(2, 1, 1, 2) << (2)$). In general, all strings whose first digit is strictly greater than the others are dominant, but there are even 
more dominant strings (for instance $(3, 1, 3, 2)$ is dominant).

\begin{remark} \label{domextreme}
By Proposition \ref{stringlemma}, if $S$ is dominant then 
$[0; S]\in \QQ_E$.
\end{remark}

A very useful feature of dominant strings is that they can be easily used to produce other dominant strings:
\begin{lemma} \label{newdom}
Let $S_0$ be a dominant string, and $B$ a proper suffix of $S_0$ of even length. Then, for any $m \geq 1$, $S_0^m B$ is a dominant string.
\end{lemma}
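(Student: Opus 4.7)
The plan is to verify the defining property of dominance for $T := S_0^m B$ directly. First, note that $|T| = m|S_0| + |B|$ is even, since $|S_0|$ is even (because $S_0$ is dominant) and $|B|$ is even by hypothesis. Next, fix an arbitrary decomposition $T = XY$ with $X, Y$ non-empty; the goal is to show $T << Y$. I would enumerate the possible suffixes $Y$ according to where the split falls, which gives three cases: (i) $Y = S_0^j B$ for some $0 \le j \le m-1$ (split on a block boundary); (ii) $Y = C S_0^j B$ with $C$ a non-empty proper suffix of $S_0$ and $0 \le j \le m-1$ (split strictly inside one of the $S_0$-blocks); (iii) $Y$ a non-empty proper suffix of $B$ (split strictly inside the terminal $B$).

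The unifying tool is the dominance of $S_0$, which gives $S_0 << B$ and $S_0 << C$ for every non-empty proper suffix $C$ of $S_0$. Case (iii) is immediate: since $Y$ is a non-empty proper suffix of $B$ and $B$ is a proper suffix of $S_0$, $Y$ is itself a non-empty proper suffix of $S_0$, so $S_0 << Y$; because the first $|Y|$ symbols of $T$ coincide with those of $S_0$, the first differing position and its parity carry over directly to give $T << Y$. Case (ii) is analogous: the first $|C|$ symbols of $T$ agree with those of $S_0$, while the first $|C|$ symbols of $Y$ are $C$ itself, so the comparison $S_0 << C$ lifts to $T << Y$. Case (i) reduces, after stripping the common prefix $S_0^j$, to showing $S_0^{m-j} B << B$; this in turn follows from $S_0 << B$, since the first $|B|$ symbols of $S_0^{m-j} B$ are exactly those of $S_0$.

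The main subtlety---and the step I would check most carefully---is the parity bookkeeping. The relation $<<$ flips direction based on the parity of the first position where two strings disagree, so ``lifting'' a comparison $S_0 << Z$ to a comparison $T << Y$ requires that the common prefix stripped from both $T$ and $Y$ have even length. Crucially, $|S_0|$ being even means that any prefix consisting of whole $S_0$-blocks has even length, so the parity of the first differing position is preserved under the reductions above. This is precisely why the hypothesis that $|B|$ is even (needed to keep $|T|$ even so that the definition of dominance applies to $T$) and the even length of $S_0$ (built into the definition of dominance) are both essential for the argument.
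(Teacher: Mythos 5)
Your proof is correct and follows essentially the same route as the paper's: the identical three-way case analysis of the proper suffixes of $S_0^m B$ (suffix of $B$; block-aligned $S_0^j B$; suffix cutting into an $S_0$-block), each resolved by invoking the dominance of $S_0$ and observing that the comparison is decided within an initial segment where $S_0^m B$ agrees with $S_0$. Your explicit parity bookkeeping (stripping whole $S_0$-blocks of even length preserves the sign of $<<$) is a point the paper leaves implicit, but it is the same argument.
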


\begin{proof}
Let $Y$ be a proper suffix of $S_0^mB$. There are three possible cases:
\begin{enumerate}
 \item $Y$ is a suffix of $B$, hence a proper suffix of $S_0$. Hence, since $S_0$ is dominant, $S_0  >> Y$ and $S_0^m B >> Y$.
\item $Y$ is of the form $S_0^kB$, with $1 \leq k < m$. Then by dominance $S_0 >> B$, which implies $S_0^{m-k}B >> B$, hence $S_0^m B >> S_0^k B$.
\item $Y$ is of the form $CS_0^k B$, with $0 \leq k < m$ and $C$ a proper suffix of $S_0$. Then again the claim follows by the fact 
that $S_0$ is dominant, hence $S_0 >> C$.
\end{enumerate}
\end{proof}

\begin{lemma} \label{notwoconsec}
A dominant string $S_0$  cannot begin with two equal digits.
\end{lemma}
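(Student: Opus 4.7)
The plan is to argue by contradiction: assume $S_0 = (a_1, \dots, a_n)$ is dominant with $a_1 = a_2 = a$, and extract two incompatible sign conditions by choosing two carefully placed splittings of $S_0$.

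First I would dispose of the degenerate case when every digit of $S_0$ equals $a$: applying dominance to the splitting $A = (a_1)$, $B = (a_2, \dots, a_n)$ would require $S_0 << B$, yet in that comparison every position of the shorter string already matches, so no index $k$ as in the definition of $<<$ can be produced, contradicting dominance. Hence I may assume that some digit differs from $a$, and let $m$ be the maximal index with $a_1 = \dots = a_m = a$; then $2 \leq m < n$ and $a_{m+1} \neq a$.

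Next I would apply dominance to the two splittings obtained by cutting $S_0$ after position $1$ and after position $2$: set $B_1 = (a_2, \dots, a_n)$ and $B_2 = (a_3, \dots, a_n)$. Comparing $S_0$ with $B_1$ position by position, equality reads $a_i = a_{i+1}$ and persists exactly for $i < m$, so the first disagreement is at $i = m$; the definition of $<<$ with $k = m - 1$ then forces an inequality between $a_m$ and $a_{m+1}$ whose direction is governed by the parity of $m - 1$. Comparing $S_0$ with $B_2$ position by position, equality reads $a_i = a_{i+2}$ and persists for $i < m - 1$, so the first disagreement is at $i = m - 1$; the definition of $<<$ with $k = m - 2$ forces an inequality between $a_{m-1}$ and $a_{m+1}$ whose direction is governed by the parity of $m - 2$.

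Since $a_m = a_{m-1} = a$, both inequalities constrain the sign of $a - a_{m+1}$; but the parities $m - 1$ and $m - 2$ are opposite, so the two required signs are opposite as well, a contradiction. The main obstacle is simply keeping the parity bookkeeping in the definition of $<<$ straight: the one-digit shift between the two splittings flips the parity of the first-disagreement index, which is exactly what delivers the contradiction.
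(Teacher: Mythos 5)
Your proof is correct, and the mechanism is essentially the one the paper uses: a one-digit shift between two proper suffixes flips the parity of the first disagreement index, yielding contradictory sign requirements in the order $<<$. The only cosmetic difference is your choice of suffixes (those obtained by dropping the first one and the first two digits) versus the paper's (the suffix $B$ following the initial constant block and the suffix $aB$ retaining one copy of $a$); both your main argument and your treatment of the degenerate all-equal-digits case match the paper's proof in substance.
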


\begin{proof}
By definition of dominance, $S_0$ cannot consist of just $k \geq 2$ equal digits. Suppose instead 
it has the form $S_0 = (a)^k B$ with $k \geq 2$ and $B$ non empty and which does not begin with $a$. Then 
by dominance $(a)^k B < < B$, hence $a << B$ since $B$ does not begin with $a$. 
However, this implies $aB << aa$ and hence $aB << (a)^kB = S_0$, which contradicts the definition 
of dominance because $aB$ is a proper suffix of $S_0$.
\end{proof}

The reason why dominant parameters turn out to be so useful is that they can approximate untuned parameters:

\begin{proposition}[\cite{CT2}, Proposition 6] \label{density}
The set of dominant parameters is dense in $UT \setminus \{ g\}$. More precisely, every parameter in $UT \setminus \{g\}$
is accumulated from the right by a dominant parameter.
\end{proposition}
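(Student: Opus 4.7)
The plan is to construct, for each $x \in UT \setminus \{g\}$, an explicit sequence of dominant quadratic irrationals $\alpha_n = [0; \overline{S_n}]$, with $S_n$ dominant of even length, such that $\alpha_n > x$ and $\alpha_n \to x$. Since $\EE$ contains no rationals (every rational lies in some $I_r$) and $UT \subseteq \EE$, the parameter $x$ is necessarily irrational; write $x = [0; a_1, a_2, \ldots]$. The assumption $x \in \EE$ amounts to the shift-minimality condition $G^k(x) \geq x$ for every $k \geq 0$, which translates into $(a_{k+1}, a_{k+2}, \ldots) \geq (a_1, a_2, \ldots)$ in the order induced on infinite strings by $[0; \cdot]$.

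A preliminary parity calculation shows that the naive truncations of $x$ fall on the wrong side of $x$. Indeed, for $T := (a_1, \ldots, a_m)$, the parity of $m$ together with shift-minimality determines on which side of $x$ the periodic value $[0; \overline{T}]$ falls: for $m$ even one gets $[0; \overline{T}] < x$, while for $m$ odd one gets $[0; \overline{T}] > x$. Odd-length truncations thus approximate $x$ from the right but cannot be dominant, since dominance requires even length. The construction of $S_n$ must therefore take an odd-length initial segment of $x$ and append a short block of digits to make the length even, while preserving $[0; \overline{S_n}] > x$ and achieving dominance.

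The crux of the proof is to show that such a construction succeeds at infinitely many $n$. The dominance condition $S_n = AB << B$ at each proper splitting reduces, via the rules for the $<<$-order, to comparisons between $(a_1, a_2, \ldots)$ and its shifts $G^{|A|}(x) = (a_{|A|+1}, \ldots)$; the splittings with $|A|$ less than the truncation length are controlled directly by shift-minimality of $(a_k)$, while the splittings involving the appended block reduce to a small number of explicit inequalities determining how the appended digits must be chosen. The subtler requirement is the \emph{borderless} property: no proper suffix of $S_n$ may be a prefix of $S_n$. Here the full hypothesis $x \in UT$ (not merely $x \in \EE$) becomes indispensable: by Proposition \ref{concat} a parameter lying in a tuning window $W_r$ has c.f.\ expansion equal to an infinite concatenation of the two words $S_0, S_1$ attached to $r$, which produces persistent borders of every length divisible by $|S_0|$ in every initial segment. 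Being untuned rules out this rigid periodicity and yields infinitely many indices at which the initial segment, possibly after the described modification, is borderless.

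Once such a sequence is produced, the convergence $\alpha_n \to x$ is immediate: since $\overline{S_n}$ and $x$ share the first $|S_n|$ digits of their c.f.\ expansions, the contraction estimate \eqref{contraction} gives $|\alpha_n - x| \leq q(S_n)^{-2} \to 0$. The main obstacle is the combinatorial step of producing, from the single hypothesis $x \in UT$, infinitely many indices at which the modified initial segment yields a dominant borderless string; this requires a delicate interplay between the parity of the truncation length, the precise values of the $a_j$'s used in the modification, and the absence of long periodic patterns guaranteed by untunedness.
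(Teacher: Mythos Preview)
The paper does not prove Proposition~\ref{density}; it is quoted verbatim from \cite{CT2} (Proposition~6) and no argument is given here. So there is no proof in the present paper to compare your attempt against.

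As for the proposal itself: it is an outline, not a proof. You correctly isolate the right ingredients---truncating the c.f.\ expansion of $x$, fixing the parity by appending a short block, and invoking the untuned hypothesis to rule out persistent overlaps---but the decisive combinatorial claim is only asserted, not established. Specifically, you explain why a \emph{tuned} parameter is problematic (its expansion is an infinite concatenation of $S_0,S_1$, which forces borders in every long prefix), but you never prove the converse direction you need: that if $x\notin W_r$ for every $r$, then infinitely many modified truncations are dominant. That implication is the entire content of the proposition, and it requires a real argument---roughly, one must show that if dominance fails for all long enough truncations, the failure pattern forces the sequence $(a_k)$ into a two-block substitutive structure, placing $x$ in some $W_r$. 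Your final paragraph (``delicate interplay\dots'') openly concedes that this step is missing.

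Two smaller points. First, dominance is strictly stronger than being borderless: you need $S_n<<B$ for every proper suffix $B$, not merely that no suffix is a prefix; the non-border suffixes must also be handled, and shift-minimality alone does not do this once you have appended extra digits. Second, the parity claim ``for $m$ even one gets $[0;\overline{T}]<x$'' needs care: if $G^m(x)=x$ (which can happen for $x\in\EE$ a quadratic irrational), the inequality is not strict, and the construction must separately treat the case where $x$ itself has purely periodic expansion.
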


\begin{proposition} \label{rightaprrox}
Every element $\beta \in \overline{UT} \setminus \{ g\}$ is
accumulated by non-neutral maximal quadratic intervals. 
\end{proposition}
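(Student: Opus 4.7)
The plan is to split into cases based on whether $\beta \in UT$ or $\beta \in \overline{UT}\setminus UT$, using in the latter case the characterization $\beta = \omega(r) = \tau_r(0)$ from Proposition \ref{cantorut}(iii).

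In Case 1 ($\beta\in UT\setminus\{g\}$), I invoke Proposition \ref{density} to obtain dominant parameters $\gamma_n = [0;\overline{S_n}]$ with $\gamma_n\downarrow\beta$; each $\gamma_n$ is the right endpoint of the maximal interval $I_{r_n}$ with $r_n = [0;S_n]$. If infinitely many $r_n$ are non-neutral, we are done. Otherwise, I perturb each neutral $S_n$ (of even length $2k$). When $k=1$, dominance forces $a_1>a_2$ and hence $\llbracket S_n\rrbracket = a_1-a_2 > 0$, contradicting neutrality, so $k\geq 2$. I then claim some even-length proper suffix $B_i = (a_{2i+1},\ldots,a_{2k})$ has $\llbracket B_i\rrbracket \neq 0$: otherwise the concatenation identity $\llbracket AB\rrbracket = \llbracket A\rrbracket + \llbracket B\rrbracket$ (valid when $|A|$ is even), combined with $\llbracket S_n\rrbracket = 0$, would force $\llbracket(a_1,a_2)\rrbracket = 0$, i.e.\ $a_1=a_2$, contradicting Lemma \ref{notwoconsec}. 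By Lemma \ref{newdom}, $S_n^m B_i$ is dominant for every $m\geq 1$, hence $[0;S_n^m B_i]\in\mathbb{Q}_E$ by Remark \ref{domextreme}, with $\llbracket S_n^m B_i\rrbracket = \llbracket B_i\rrbracket \neq 0$. Since $[0;\overline{S_n^m B_i}]\to \gamma_n$ as $m\to\infty$, a diagonal choice of $m_n$ yields non-neutral maximal quadratic intervals whose endpoints accumulate $\beta$.

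In Case 2 ($\beta\in\overline{UT}\setminus UT$), write $\beta = \omega(r)$ with $r\in\mathbb{Q}_{UT}$. If $\llbracket r\rrbracket \neq 0$ (subcase 2a), consider $\tau_r([0;n])$ for $n\geq 3$: by Proposition \ref{qrenorm}(ii) these lie in $\mathbb{Q}_E\cap W_r$, and by Lemma \ref{ss}, $\llbracket\tau_r([0;n])\rrbracket = -(n-2)\llbracket r\rrbracket \neq 0$. Since $\tau_r$ is strictly increasing and $\tau_r([0;n]) = [0; S_1 S_0^{n-1}] \to [0; S_1\overline{S_0}] = \omega(r)$, these non-neutral maximal intervals approach $\beta$ from the right. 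If $\llbracket r\rrbracket = 0$ (subcase 2b), Lemma \ref{ss} shows that every maximal interval in $W_r$ is neutral, so I must approach $\beta$ from the left. Because $W_r$ is disjoint from $UT$ while $\beta\in\overline{UT}$, there exist $\beta_n\in UT$ with $\beta_n\uparrow\beta$; applying the Case 1 construction to each $\beta_n$ and choosing the perturbation parameter large enough to keep the resulting dominant parameter strictly below $\beta$, a diagonal argument yields non-neutral maximal intervals accumulating $\beta$ from the left.

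The main obstacle is the perturbation step in Case 1 (reused in subcase 2b): producing, from a neutral dominant string, a nearby non-neutral dominant string. This is precisely where Lemma \ref{newdom} (building new dominant strings of the form $S^m B$) combines with Lemma \ref{notwoconsec} (ruling out $a_1=a_2$, which kickstarts the suffix-based alternating-sum argument). The side-control in subcase 2b depends on the full strength of Proposition \ref{density}, namely accumulation \emph{from the right}, which is what lets me keep the dominant approximants strictly between $\beta_n$ and $\beta$.
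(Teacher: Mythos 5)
Your proof is correct and follows essentially the same route as the paper: density of dominant parameters (Proposition \ref{density}), the perturbation $S^mB$ built from Lemmas \ref{newdom} and \ref{notwoconsec} to turn a neutral dominant string into nearby non-neutral ones, and, for the boundary case $\beta=\tau_r(0)$, accumulation from the left by points of $UT$ followed by a diagonal argument. The only divergence is your subcase 2a, where you construct non-neutral intervals $I_{\tau_r([0;n])}$ directly inside $W_r$; this is a valid shortcut but not needed, since the left-accumulation argument of your subcase 2b already handles every $\beta\in\overline{UT}\setminus UT$ regardless of the sign of $\llbracket r\rrbracket$, which is exactly how the paper treats that case.
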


\begin{proof}
We shall prove that either $\beta \in UT \setminus \{ g\}$, and $\beta$ is
accumulated from the right by non-neutral maximal quadratic intervals,
or $\beta=\tau_s(0)$ for some $s\in \QUT$, and $\beta$ is accumulated
from the left by non-neutral maximal quadratic intervals.

If $\beta\in UT$ then, by Proposition \ref{density}, $\beta$ is the limit point from the right of a sequence $\alpha_n = [0; \overline{A_n}]$ 
with $A_n$ dominant. If $\llbracket A_n \rrbracket \neq 0$ for infinitely many $n$, the claim is proven. 
Otherwise, it is sufficient to prove that every dominant parameter $\alpha_n$ such that $\llbracket A_n \rrbracket = 0$
is accumulated from the right by non-neutral maximal intervals. Let $S_0$ be a dominant string, with $\llbracket S_0 \rrbracket = 0$, 
and let $\alpha := [0; \overline{S_0}].$
First of all, the length of $S_0$ is bigger than $2$: indeed, if $S_0$ had length $2$, then 
condition $\llbracket S_0 \rrbracket = 0$ would force it to be of the form $S_0 = (a, a)$ for some $a$, which contradicts
the definition of dominant. Hence, we can write $S_0 = AB$ with $A$ of length $2$ and $B$ of positive, even length.
Then, by Lemma \ref{newdom}, $S_0^m B$ is also dominant, hence $p_m := [0; S_0^m B] \in \mathbb{Q}_E$ by Remark \ref{domextreme}. Moreover,
$\alpha < p_m$ since $S_0 << B$. 
Furthermore, $S_0$ cannot begin with two equal digits (Lemma \ref{notwoconsec}), hence $\llbracket A \rrbracket \neq 0$ 
and $\llbracket S_0^m B \rrbracket = \llbracket B \rrbracket = \llbracket S_0 \rrbracket - \llbracket A \rrbracket \neq 0$.
Thus the sequence $I_{p_m}$ is a sequence of non-neutral maximal quadratic intervals
which tends to $\beta$ from the right, and the claim is proven.  

If $\beta \in \overline{UT} \setminus UT$, then by Proposition \ref{cantorut} (iii) there exists $s\in \QUT$ such that 
$\beta=\tau_s(0)$. Since $\overline{UT}$ is a Cantor set and $\beta$ lies on its boundary, $\beta$ is the limit point (from the left)
of a sequence of points of $UT$, hence the claim follows by the above discussion.
\end{proof}

%\begin{proposition} \label{rightplateau}
%Let $I_r = (\alpha_1, \alpha_0)$ be a neutral maximal quadratic interval such that $r \in \mathbb{Q}_{UT}$, $\alpha_0 \neq g$. 
%Then $\alpha_0$ is accumulated from the right by non-neutral maximal quadratic intervals.
%\end{proposition}

%\begin{proof}
%Let $r = [0; S_0]$ be the c.f. expansion of $r$ of even length, so that $\alpha_0 = [0; \overline{S_0}]$; 
%by neutrality, $\llbracket r \rrbracket = \llbracket S_0 \rrbracket = 0$. We have two cases:
%\begin{enumerate}
% \item 
%Suppose $S_0$ is dominant: 
%Then we can write $S_0 = AB$ with $A$ of length $2$ and $B$ of positive, even length.
%By Lemma \ref{newdom}, $S_0^m B$ is also dominant, hence $p_m := [0; S_0^m B] \in \mathbb{Q}_E$. Moreover,
%$\llbracket p_m \rrbracket \neq 0$ because $\llbracket A \rrbracket \neq 0$. Thus $I_{p_m}$ is a sequence of maximal non-neutral 
%quadratic intervals approxting $\alpha_0$ from the right.
%\item
%If $S_0$ is not dominant, then, by Proposition \ref{density}, $\alpha_0$ is accumulated from the right by dominant parameters
%$\alpha_n = [0; \overline{A_n}]$ of $\EE$ with $A_n$ of even length and dominant. 
%If $\llbracket A_n \rrbracket \neq 0$ for infinitely many $n$, we are done; otherwise, by the previous discussion of the dominant case, each $\alpha_n$ with $\llbracket A_n \rrbracket = 0$
%is accumulated from the right by non-neutral intervals, hence so is $\alpha_0$.  
%\end{enumerate}
%\end{proof}

\subsection{Characterization of plateaux} \label{ss:proof}

\begin{definition}
A parameter $x \in \mathcal{E}$ is \emph{finitely renormalizable} if it belongs to finitely many tuning windows. This is equivalent to 
say that $x = \tau_r(y)$, with $y \in UT$. A parameter $x \in \mathcal{E}$ is \emph{infinitely renormalizable} if it lies in infinitely many tuning windows $W_r$, with $r \in \mathbb{Q}_E$.
Untuned parameters are also referred to as {\em non renormalizable}.
\end{definition}

We are finally ready to prove Theorem \ref{plateaux} stated in the introduction, and indeed the following stronger version:

\begin{theorem} \label{last}
An open interval $U \subseteq [0, 1]$ of the parameter space of $\alpha$-continued fraction transformations is a plateau for the entropy
function $h(\alpha)$ if and only if it is the interior of a neutral tuning window $U = \overset{\circ}{W_r}$, with $r$ of either one of the following types: 

$$\begin{array}{llr}
(NR) &  r \in \mathbb{Q}_{UT}, \llbracket r \rrbracket = 0 & \textup{(non-renormalizable case)} \\
(FR) &  r=\tau_{r_1}(r_0) \ with 
\left\{
\begin{array}{ll} 
r_0\in \QUT, & \llbracket r_0 \rrbracket =0\\
r_1\in \QQ_E, & \llbracket r_1 \rrbracket \neq 0
\end{array} \right. &  \textup{(finitely renormalizable case)} 
\end{array}$$
\end{theorem}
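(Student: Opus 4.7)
The plan is to combine the constancy of $h$ on neutral tuning windows (Proposition \ref{neutraltuning}) with the accumulation of non-neutral maximal intervals from Proposition \ref{rightaprrox}. In both (NR) and (FR) one has $\llbracket r\rrbracket=0$: directly in (NR), and in (FR) by Lemma \ref{ss} since $\llbracket\tau_{r_1}(r_0)\rrbracket=-\llbracket r_1\rrbracket\llbracket r_0\rrbracket=0$. Proposition \ref{neutraltuning} then gives that $h$ is constant on $\overline{W_r}$, so it remains to show that both endpoints $\omega(r)$ and $\alpha_0(r)$ are accumulated from outside $W_r$ by non-neutral maximal intervals, on which $h$ is strictly monotone by Proposition \ref{Nakada}. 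In (NR), $\overline{W_r}$ is a gap of the Cantor set $\overline{UT}$ (Proposition \ref{cantorut}), whose endpoints lie in $\overline{UT}$, and Proposition \ref{rightaprrox} directly supplies the accumulation. In (FR), the associativity identity $\tau_r=\tau_{r_1}\circ\tau_{r_0}$ (from \eqref{eq:associativity}) together with direct CF substitution yields $\omega(r)=\tau_{r_1}(\omega(r_0))$ and $\alpha_0(r)=\tau_{r_1}(\alpha_0(r_0))$; I would then apply the (NR) conclusion to $r_0$ and push the accumulating non-neutral intervals through $\tau_{r_1}$, using Proposition \ref{qrenorm}(ii) to preserve maximality, Lemma \ref{ss} (with $\llbracket r_1\rrbracket\neq 0$) to preserve non-neutrality, and Lemma \ref{inj} to preserve order and side-of-approach at the irrational limits.

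\textbf{Necessity.} Let $U$ be a plateau and pick $\alpha\in U$. By Lemma \ref{nested} the tuning windows containing $\alpha$ form a nested chain $W_{s_1}\supsetneq W_{s_2}\supsetneq\cdots$ with $s_1\in\QUT$ and $s_{k+1}=\tau_{s_k}(p_k)$, $p_k\in\QUT$. First I would verify this chain is nonempty: $\alpha\in UT\setminus\{g\}$ is excluded by Proposition \ref{rightaprrox}, $\alpha=g$ by the strict monotonicity of $h$ on $(g,1]$ given by \eqref{eq:N81}, and $\alpha\notin\mathcal{E}$ forces $\alpha\in I_s$ with $s$ necessarily neutral (else Proposition \ref{Nakada} contradicts constancy on $U$), so $\alpha\in W_s\supsetneq I_s$. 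Next I would show the chain contains a neutral element: if every $s_k$ were non-neutral, iterating the (FR)-style sufficiency argument (lifting the accumulation at the endpoints of $W_{p_{k-1}}$ through $\tau_{s_{k-1}}$) gives $U\subseteq W_{s_k}$ for all $k$, but the contraction estimate \eqref{contraction} forces $\mathrm{diam}(W_{s_k})\to 0$, yielding the contradiction $U=\{\alpha\}$. Taking $K$ minimal with $s_K$ neutral, the same lifted argument inside $W_{s_{K-1}}$ rules out any open extension of $\overset{\circ}{W_{s_K}}$ being constant, so $U=\overset{\circ}{W_{s_K}}$. Finally, when $K=1$ one has $s_K=p_0\in\QUT$ neutral (case (NR)); when $K\geq 2$, $s_K=\tau_{s_{K-1}}(p_{K-1})$ with $p_{K-1}$ neutral untuned and $s_{K-1}$ non-neutral by the minimality of $K$ together with iterated use of Lemma \ref{ss}, giving (FR) with $r_0=p_{K-1}$ and $r_1=s_{K-1}$.

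The main obstacle will be executing the ``lifted sufficiency'' argument cleanly across multiple levels of tuning: one must show simultaneously that Proposition \ref{rightaprrox} survives repeated tuning (combining Proposition \ref{qrenorm}(ii), Lemma \ref{ss}, and Lemma \ref{inj}) and that $\mathrm{diam}(W_{s_k})\to 0$ via the denominator-based bound \eqref{contraction}. Once these two ingredients are packaged correctly, the rest of the proof is purely combinatorial, governed by the sign law of matching indices under the composition of tuning operators.
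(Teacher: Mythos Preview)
Your sufficiency direction is essentially the paper's argument. The only omission is the boundary case $\alpha_0(r)=g$ (i.e.\ $r=1/2$), which Proposition \ref{rightaprrox} explicitly excludes and which the paper disposes of via the explicit formula \eqref{eq:N81}.

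For necessity you take a genuinely different route. The paper simply picks some $r\in\QQ_E$ with $I_r\subseteq U$ (possible since $\EE$ has empty interior), observes $\llbracket r\rrbracket=0$, and applies the \emph{finite} untuned factorization of Lemma \ref{factorization} to that rational. The largest index $j$ with $\llbracket r_j\rrbracket=0$ then directly produces an $s$ of type (NR) or (FR), and since the sufficiency direction already exhibits $\overset{\circ}{W_s}$ as a plateau containing $r$, one concludes $U=\overset{\circ}{W_s}$. No infinite chains, no diameter estimates: everything reduces to the combinatorics of a single rational number.

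Your approach instead picks a general $\alpha\in U$ and studies the nested chain of tuning windows containing it. This is more geometric and in principle works, but as written it has a gap: when $\alpha\in\EE$ is \emph{finitely} renormalizable and the finitely many windows $W_{s_1}\supsetneq\cdots\supsetneq W_{s_m}$ are all non-neutral, the diameter-to-zero argument is unavailable and you have not derived a contradiction. The fix is easy --- write $\alpha=\tau_{s_m}(y)$ with $y\in UT$ and lift Proposition \ref{rightaprrox} once more through the non-neutral $\tau_{s_m}$ to see that $\alpha$ itself is accumulated by non-neutral intervals --- or, more efficiently, choose $\alpha\notin\EE$ from the start, which immediately places a neutral $W_s$ in the chain and collapses your argument to the paper's. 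The paper's route buys a short, entirely finite proof with no need for \eqref{contraction}; your route gives a pleasant picture of the plateau as the outermost neutral window in a nested family, at the cost of the extra case analysis and the diameter estimate.
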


\begin{proof}
Let us pick $r$ which satisfies (NR), and let $W_r = [\omega, \alpha_0)$ be its tuning window.
By Proposition \ref{neutraltuning}, since $\llbracket r \rrbracket =0$, the entropy is constant on $\overline{W_r}$.
Let us prove that it is not constant on any larger interval.
Since $r \in \mathbb{Q}_{UT}$, by Proposition \ref{cantorut}, $\alpha_0$ belongs to $UT$. 
If $\alpha_0 = g$, then by the explicit formula \eqref{eq:N81}
the entropy is decreasing to the right of $\alpha_0$.
Otherwise, by Proposition \ref{rightaprrox}, $\alpha_0$ is accumulated from the right by non-neutral maximal quadratic intervals, 
hence entropy is not constant to the right of $\alpha_0$.
Moreover, by Proposition \ref{cantorut}, $\omega$ belongs to the boundary of $UT$, hence, 
by Proposition \ref{rightaprrox}, it is accumulated from the left by non-neutral intervals. 
This means that the interior of $W_r$ is a maximal open interval of constance for the entropy $h(\alpha)$, i.e. a plateau.

Now, suppose that $r$  satisfies condition (FR), with $r = \tau_{r_1}(r_0)$. By the (NR) case, the interior of $W_{r_0}$ is a plateau,
and $W_{r_0}$ is accumulated from both sides by non-neutral
intervals. Since $\tau_{r_1}$ maps non-neutral intervals to
non-neutral intervals and is continuous on $\EE$, then $W_r$ is
accumulated from both sides by non-neutral intervals, hence its
interior is a plateau.

Suppose now $U$ is a plateau. Since $\mathcal{E}$ has no interior part (e.g. by formula \eqref{Edef}), there is $r \in \mathbb{Q}_E$ such that $I_r$ 
intersects $U$, hence, by Proposition \ref{Nakada}, $\llbracket r \rrbracket = 0$ and actually $I_r \subseteq U$. 
Then, by Lemma \ref{factorization} one has the factorization 
$$r = \tau_{r_n} \circ \dots \circ \tau_{r_1} (r_0)$$
with each $r_i \in \mathbb{Q}_{UT}$ untuned (recall $n$ can possibly be zero, in which case $r = r_0$). 
Since the matching index is multiplicative (eq. \eqref{eq:product}), there exists at least one $r_i$
with zero meatching index: let $j \in \{0, \dots, n\}$ be the largest index such that $\llbracket r_j \rrbracket = 0$.
If $j = n$, let $s := r_n$: by the first part of the proof, the interior of $W_s$ is a plateau, 
and it intersects $U$ because they both contain $r$ (by lemma \ref{nested}, $r$ belongs to the interior of $W_s$), 
hence $U = \overset{\circ}{W_s}$, and we are in case (NR). 

If, otherwise, $j < n$, let $s:= \tau_{r_n} \circ \dots \circ \tau_{r_{j+1}}(r_j)$.
By associativity of tuning (eq. \eqref{eq:associativity}) we can write 
$$s = \tau_{s_1}(s_0)$$
with $s_0 := r_j$ and $s_1 := \tau_{r_n} \circ \dots \circ \tau_{r_{j+2}}(r_{j+1})$.
Moreover, by multiplicativity of the matching index (eq. \eqref{eq:product}) $\llbracket s_1 \rrbracket \neq 0$, 
hence $s$ falls into the case (FR) and by the first part of the proof the interior of $W_s$ 
is a plateau. Also, by construction, $r$ belongs to the image of $\tau_s$, hence 
 it belongs to the interior of $W_s$. As a consequence, $U$ and 
$\overset{\circ}{W_s}$  are intersecting plateaux, hence they must coincide.
\end{proof}

\section{Classification of local monotonic behaviour} \label{sec:class}

\begin{lemma} \label{inftypes}
Any non-neutral tuning window $W_r$ contains infinitely many intervals on which the entropy $h(\alpha)$ is constant, 
infinitely many over which it is increasing, and infinitely many on which it is decreasing.
\end{lemma}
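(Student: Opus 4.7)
The plan is to exploit multiplicativity of the matching index under tuning (Lemma \ref{ss}) and reduce the statement to exhibiting, inside $\QQ_E$, three infinite families of pseudocenters, one with positive matching index, one with negative, and one with zero.

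Fix $r \in \QQ_E$ with $\llbracket r \rrbracket \neq 0$. For any $p \in \QQ_E$, Proposition \ref{qrenorm}(ii) gives $\tau_r(p) \in \QQ_E \cap W_r$, and Lemma \ref{nested} then yields $I_{\tau_r(p)} \subseteq W_{\tau_r(p)} \subseteq W_r$. By Lemma \ref{ss},
$$\llbracket \tau_r(p) \rrbracket = -\llbracket r \rrbracket \, \llbracket p \rrbracket,$$
so $\llbracket \tau_r(p) \rrbracket$ has a sign uniquely determined by that of $\llbracket p \rrbracket$ (globally flipped if $\llbracket r \rrbracket > 0$, preserved if $\llbracket r \rrbracket < 0$). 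By Proposition \ref{Nakada} together with \eqref{eq:MNindex}, this sign prescribes whether $h$ is increasing, decreasing, or constant on $I_{\tau_r(p)}$; moreover, since $\tau_r$ is injective, distinct $p$'s produce distinct pseudocenters and therefore disjoint maximal intervals inside $W_r$.

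It thus remains to exhibit the three families inside $\QQ_E$. I would take $p_n := 1/n$ with $n \geq 3$ for strictly positive index (its odd-length expansion $(n)$ admits no proper splitting, so Proposition \ref{stringlemma} applies, and its even-length expansion $(n-1,1)$ yields $\llbracket p_n \rrbracket = n-2 > 0$); $p_n := [0;2,n]$ with $n \geq 3$ for strictly negative index (the only splitting of $(2,n)$ is $A=(2)$, $B=(n)$, with $(2,n) < (n,2)$ since $2<n$, hence $p_n \in \QQ_E$ and $\llbracket p_n \rrbracket = 2-n < 0$); and $p_n := [0;n,n]$ with $n \geq 2$ for zero index (the unique splitting is $A=B=(n)$ with $|A|$ odd, so $p_n \in \QQ_E$ and $\llbracket p_n \rrbracket = 0$). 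All three families are contained in $(0,g)$, as a quick numerical check shows.

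Taking $\tau_r$-images of these three families then produces, inside $W_r$, infinitely many pairwise disjoint maximal quadratic intervals of each of the three monotonicity types, which is precisely the content of the lemma. The main (and essentially only) obstacle is the combinatorial verification that the candidate families lie in $\QQ_E$, but this is a one-line check via Proposition \ref{stringlemma}; no further analytic input beyond the material of Sections \ref{sec:TW} and \ref{sec:mono} is required.
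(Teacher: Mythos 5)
Your overall strategy coincides with the paper's: exhibit three infinite families in $\mathbb{Q}_E$ with positive, zero and negative matching index, push them into $W_r$ via $\tau_r$, and invoke Proposition \ref{qrenorm}(ii), Lemma \ref{ss}, formula \eqref{eq:MNindex} and Proposition \ref{Nakada}. The reduction step is sound, and so are your first and third families: $1/n$ has index $n-2>0$ and $[0;n,n]$ has index $0$ (the latter is exactly the paper's $t_n$).

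The negative-index family, however, is wrong: $[0;2,n]\notin\mathbb{Q}_E$ for $n\geq 3$. You assert $(2,n)<(n,2)$ ``since $2<n$'', but the order on strings is defined precisely so that $S<T$ iff $[0;S]<[0;T]$; at the first position ($k=0$ even) a \emph{larger} digit gives a \emph{smaller} string (compare the paper's example $(2,1)<(1,1)$). Since $[0;2,n]=n/(2n+1)>2/(2n+1)=[0;n,2]$, one has $AB>BA$ for the splitting $A=(2)$, $B=(n)$, so the criterion of Proposition \ref{stringlemma} fails. Indeed $n/(2n+1)\in(\sqrt{2}-1,\,g)=I_{1/2}$ for $n\geq3$, so these rationals sit inside a maximal interval and cannot be pseudocenters of maximal intervals. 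As written, your argument only produces constant intervals together with intervals of the single strict monotonicity type determined by the sign of $-\llbracket r\rrbracket$, which does not yet prove the lemma. The gap is easy to repair: the paper takes $u_n=[0;n+1,n,1,n]$, which passes the test of Proposition \ref{stringlemma} (every proper suffix begins with a digit strictly smaller than $n+1$, so $AB<BA$ for every splitting) and has $\llbracket u_n\rrbracket=2-n<0$ for $n>2$.
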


\begin{proof}
Let us consider the following sequences of rational numbers 
$$s_n := [0; n, 1]$$ 
$$t_n := [0; n, n]$$
$$u_n := [0; n+1, n, 1, n]$$
It is not hard to check (e.g. using Proposition \ref{stringlemma}) that $s_n, t_n, u_n$ belong to $\mathbb{Q}_E$.
Moreover, by computing the matching indices one finds that, for $n > 2$,  the entropy $h(\alpha)$ is increasing on $I_{s_n}$, constant on $I_{t_n}$ 
and decreasing on $I_{u_n}$. Since $W_r$ is non-neutral, by Theorem \ref{main} $\tau_r$ either induces the same monotonicity or the opposite one, 
hence the sequences $I_{\tau_r(s_n)}, I_{\tau_r(t_n)}$ and $I_{\tau_r(u_n)}$ are sequences of 
maximal quadratic intervals which lie in $W_r$ and display all three types of monotonic behaviour. 
\end{proof}

\noindent {\bf Proof of Theorem \ref{classmon}}.
Let $\alpha \in [0, 1]$ be a parameter. If $\alpha \notin \mathcal{E}$, then $\alpha$ belongs to some maximal quadratic interval $I_r$, hence  
$h(\alpha)$ is monotone on $I_r$ by Proposition \ref{Nakada}, and by formula \eqref{eq:MNindex} the monotonicity type depends on the sign of $\llbracket r \rrbracket$. 

If $\alpha \in \mathcal{E}$, there are the following cases:
\begin{enumerate}
\item $\alpha = g$. Then $\alpha$ is a phase transition as described by formula \eqref{eq:N81};
\item $\alpha \in UT \setminus \{g\}$. Then, by Proposition \ref{rightaprrox}, $\alpha$ is accumulated from the right 
by  non-neutral tuning windows,
% hence also by non-neutral tuning windows, 
and by Lemma \ref{inftypes} each non-neutral tuning window contains 
infinitely many intervals where the entropy 
is constant, increasing or decreasing; the parameter $\alpha$ has therefore mixed monotonic beahaviour.
\item $\alpha$ is finitely renormalizable. Then one can write $\alpha = \tau_r(y)$, with $y \in UT$. There are three subcases:

\begin{itemize}
 \item[(3a)] $\llbracket r \rrbracket \neq 0$, and $y = g$. Since
   $\tau_r$ maps neutral intervals to neutral intervals and
   non-neutral intervals to non-neutral intervals, the phase
   transition at $y = g$ gets mapped to a phase transition at $\alpha$.
\item[(3b)] $\llbracket r \rrbracket \neq 0$, and $y \neq g$. Then, by
  case (2) $y$ is accumulated from the right by intervals
  with all types of monotonicity, hence so is $\alpha$.
\item[(3c)] If $\llbracket r \rrbracket = 0$, then by using the untuned factorization (Lemma \ref{factorization}) 
one can write 
$$\alpha = \tau_{r_m} \circ \dots \circ \tau_{r_0}(y) \qquad r_i \in \QUT$$
Let now $j \in \{ 0, \dots, m\}$ be the largest index such that $\llbracket r_j \rrbracket = 0$.
If $j = m$, then $\alpha$ belongs to the neutral tuning window $W_{r_m}$:
thus, either $\alpha$ belongs to the interior of $W_{r_m}$ (which means by Proposition \ref{neutraltuning} that the entropy is locally constant 
at $\alpha$), or $\alpha$ coincides with the left endpoint of $W_{r_m}$. In the latter case, $\alpha$ belongs to the boundary 
of $UT$, hence by Proposition \ref{rightaprrox} 
and Lemma \ref{inftypes} it has mixed behaviour.
If $j < m$, then by the same reasoning as above $\tau_{r_j} \circ \dots \circ \tau_{r_0}(y)$ 
either lies inside a plateau or has mixed behaviour, and since the operator
$\tau_{r_m} \circ \dots \circ \tau_{r_{j+1}}$ either respects the monotonicity or reverses it, 
also $\alpha$ either lies inside a plateau or has mixed behaviour.
%$  \tau_{r_1} \circ \tau_{r_0}(y)$ with $\llbracket r_1 \rrbracket \neq
%  0$, and $\llbracket r_0 \rrbracket = 0$.  Then $x = \tau_{r_1} \circ
%  \tau_{r_0}(z)$ with $r_1 \in \mathbb{Q}_E$, $\llbracket r_1
%  \rrbracket \neq 0$, and $r_0 \in \mathbb{Q}_{UT}$, $\llbracket r_0
%  \rrbracket = 0$.  Then, if $z \neq 0$, $\tau_{r_0}(z)$ lies in the
%  interior of the neutral tuning window $W_{r_0}$, hence also $x$ lies
%  in the interior of a plateau; if $z = 0$, then $\tau_{r_0}(z)\in \overline{UT}$, hence by Proposition \ref{rightapprox} it has mixed behaviour; since $\llbracket r_0            
%  \rrbracket \neq 0$ so
%  does $x$.
\end{itemize}
\item $\alpha$ is infinitely renormalizable, i.e. $\alpha$ lies in
  infinitely many tuning windows.  If $\alpha$ lies in at least one
  neutral tuning window $W_r = [\omega, \alpha_0)$, then it must lie
    in its interior, because $\omega$ is not infinitely
    renormalizable. This means, by Proposition \ref{neutraltuning},
    that $h$ must be constant on a neighbourhood of $\alpha$.
    Otherwise, $\alpha$ lies inside infinitely many nested non-neutral
    tuning windows $W_{r_n}$.  Since the sequence the denominators of 
    the rational numbers $r_n$ must be unbounded, the size of $W_{r_n}$
    must be arbitrarily small.  By Lemma \ref{inftypes}, in each
    $W_{r_n}$ there are infinitely many intervals with any
    monotonicity type and $\alpha$ displays mixed behaviour.
\end{enumerate}
\qed

Note that, as a consequence of the previous proof, $\alpha$ is a phase
transition if and only if it is of the form $\alpha = \tau_r(g)$, with
$r \in \mathbb{Q}_E$ and $\llbracket r \rrbracket \neq 0$, hence the
set of phase transitions is countable. Moreover, the set of points of
$\mathcal{E}$ which lie in the interior of a neutral tuning window has
Hausdorff dimension less than $1/2$ by Lemma \ref{TW}.

Finally, the set of parameters for which there is mixed behaviour has
zero Lebesgue measure because it is a subset of $\mathcal{E}$. On the
other hand, it has full Hausdorff dimension because such a set
contains $UT \setminus \{g\}$, and by Lemma \ref{HDUT} $UT$ has full
Hausdorff dimension.

\section*{Appendix: Tuning for quadratic polynomials}

The definition of tuning operators given in section \ref{sec:TW} arises 
from the dictionary between $\alpha$-continued fractions and quadratic polynomials 
first discovered in \cite{BCIT}. In this appendix, we will recall a few facts about complex dynamics 
and show how our construction is related to the combinatorial structure of the Mandelbrot set.

\subsection*{Tuning for quadratic polynomials}
Let $f_c(z) := z^2 + c$ be the family of quadratic polynomials, with
$c \in \mathbb{C}$.  Recall the \emph{Mandelbrot set} $\mathcal{M}$ is
the set of parameters $c \in \mathbb{C}$ such that the orbit of the
critical point $0$ is bounded under the action of $f_c$.

The Mandelbrot set has the remarkable property that near every point of its boundary there are infinitely 
many copies of the whole $\mathcal{M}$, called \emph{baby Mandelbrot sets}.
A \emph{hyperbolic component} $W$ of the Mandelbrot set is an open, connected subset of $\mathcal{M}$
such that all $c \in W$, the orbit of the critical point $f^n(0)$ is attracted to a periodic cycle.
%Every hyperbolic component has a \emph{root}, and exactly two external parameter rays land on each root.

Douady and Hubbard \cite{DH} related the presence of baby copies of $\mathcal{M}$ to renormalization 
in the family of quadratic polynomials. 
More precisely, they associated to any hyperbolic component $W$ a \emph{tuning map} $\iota_W : \mathcal{M} \rightarrow \mathcal{M}$ which 
maps the main cardioid of $\mathcal{M}$ to $W$, and such that the image of the whole $\mathcal{M}$ under $\iota_W$ is a baby copy of $\mathcal{M}$.

\subsection*{External rays}
A coordinate system on the boundary of $\mathcal{M}$ is given by external rays. Indeed, the exterior of the Mandelbrot set 
is biholomorphic to the exterior of the unit disk
$$\Phi : \{ z \in \mathbb{C} \ : \ |z| > 1 \} \rightarrow \mathbb{C} \setminus \mathcal{M}$$
The \emph{external ray} at angle $\theta$ is the image of a ray in the complement of the unit disk: 
$$R(\theta) := \Phi(\{ \rho e^{2 \pi i \theta} \ : \ \rho > 1\})$$
The ray at angle $\theta$ is said to \emph{land} at $c \in \partial\mathcal{M}$ if $\lim_{\rho \to 1} \Phi(\rho e^{2\pi i \theta}) = c$.
This way angles in $\mathbb{R} / \mathbb{Z}$ determine parameters on the boundary of the Mandelbrot set, and it turns out that 
the binary expansions of such angles are related to the dynamics of the map $f_c$.
%Such parameter rays have purely periodic binary expansion.

The tuning map can be described in terms of external angles in the following terms. Let $W$ be a hyperbolic component, and 
$\eta_0$, $\eta_1$ the angles of the two external rays which land on the root of $W$. 
Let $\eta_0 = 0.\overline{\Sigma_0}$ and $\eta_1 = 0.\overline{\Sigma_1}$ be the (purely periodic) binary expansions of the two angles 
which land at the root of $W$. 
Let us define the map $\tau_W : \mathbb{R}/\mathbb{Z} \rightarrow \mathbb{R} /\mathbb{Z}$ in the following way:
$$\theta = 0.\theta_1\theta_2\theta_3 \dots \mapsto \tau_W(\theta) = 0.\Sigma_{\theta_1}\Sigma_{\theta_2}\Sigma_{\theta_3}\dots$$
where $\theta = 0.\theta_1 \theta_2 \dots$ is the binary expansion of $\theta$, and its image is given by substituting the binary 
string $\Sigma_0$ to every occurrence of $0$ and $\Sigma_1$ to every occurrence of $1$.

\begin{proposition}[\cite{Do}, Proposition 7]
The map $\tau_W$ has the property that, if the ray of external angle $\theta$ lands at $c \in \partial\mathcal{M}$, 
then the ray at external angle $\tau_W(\theta)$ lands at $\iota_W(c)$.
\end{proposition}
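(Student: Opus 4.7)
The plan is to prove this via the combinatorial/dynamical model of the Mandelbrot set furnished by the angle-doubling map $d(\theta) = 2\theta \bmod 1$ on $\RR/\ZZ$, combined with the Douady--Hubbard straightening of polynomial-like maps. Let $p$ denote the period of the hyperbolic component $W$, so that $|\Sigma_0| = |\Sigma_1| = p$.

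The first, purely combinatorial, step is to observe that $\tau_W$ is a topological semiconjugacy between $d$ and $d^p$: since both $\Sigma_0$ and $\Sigma_1$ have length $p$, one has
\begin{equation*}
d^p\bigl(\tau_W(\theta)\bigr) = \tau_W\bigl(d(\theta)\bigr) \qquad \forall\,\theta\in\RR/\ZZ,
\end{equation*}
because $p$-fold doubling on $\tau_W(\theta) = 0.\Sigma_{\theta_1}\Sigma_{\theta_2}\dots$ erases the first block $\Sigma_{\theta_1}$. From this identity, together with the fact that $\tau_W$ preserves cyclic order and maps periodic angles to periodic angles, one deduces that the image $\tau_W(\RR/\ZZ)$ is a $d^p$-invariant Cantor set (together with the countable orbit of preperiodic angles), and that $\tau_W$ conjugates the itinerary description of angles to itineraries of $d^p$ with respect to the partition of the circle by the rays $R(\eta_0)$, $R(\eta_1)$.

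The second, dynamical, step invokes renormalization. A parameter $c' = \iota_W(c)$ lies in the baby Mandelbrot copy attached to $W$ precisely when $f_{c'}^p$ admits a polynomial-like restriction on a neighbourhood of the critical point which is hybrid equivalent to $f_c$ (this is the content of Douady--Hubbard's tuning construction). Under straightening, external rays of the polynomial-like map correspond to certain external rays of $f_{c'}$; more precisely, the two rays $R(\eta_0)$ and $R(\eta_1)$ landing at the root of $W$ enclose a ``wake'' in parameter space, and inside this wake each dynamical external ray of $f_{c'}$ whose angle has the form $0.\Sigma_{\theta_1}\Sigma_{\theta_2}\dots$ is mapped, by $f_{c'}^p$, to the ray of angle $0.\Sigma_{\theta_2}\Sigma_{\theta_3}\dots$. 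Transferring this to the parameter plane via the standard argument of Douady--Hubbard (essentially, that accessibility of $c'$ from $\CC\setminus\MM$ is detected by the landing pattern of parameter rays, which mirrors the landing pattern of dynamical rays on the Julia set), one concludes that the parameter ray $R(\tau_W(\theta))$ lands at $\iota_W(c)$ whenever $R(\theta)$ lands at $c$.

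The main obstacle is the last transfer: turning the dynamical identity $d^p \circ \tau_W = \tau_W \circ d$ into a statement about landing of parameter rays requires controlling the tuning map $\iota_W$ at boundary points of $\MM$ (where the polynomial is no longer renormalizable in the classical hyperbolic sense), and, for non-preperiodic $\theta$, appealing either to local connectivity results for the relevant sub-copies of $\MM$ or to the ``puzzle'' machinery used by Douady in \cite{Do}. For $\theta$ rational, however, one can argue directly: the landing of $R(\tau_W(\theta))$ at $\iota_W(c)$ follows from the fact that $\tau_W$ maps the pair of rays landing at each Misiurewicz or root parameter of $\MM$ to the corresponding pair landing at the image parameter, a statement which reduces, by the semiconjugacy above, to checking periods and preperiods of itineraries. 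The general case is then obtained by density and continuity of $\tau_W$ at irrational angles.
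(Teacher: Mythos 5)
First, a point of order: the paper does not prove this proposition at all --- it is quoted verbatim from Douady (\cite{Do}, Proposition 7) and used as a black box in the Appendix, so there is no internal proof to compare your attempt against. What you have written is a sketch of the external result itself, and it must be judged on its own merits.

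Your combinatorial first step is correct: since $|\Sigma_0|=|\Sigma_1|=p$, the identity $d^p\circ\tau_W=\tau_W\circ d$ holds and $\tau_W$ does set up the itinerary correspondence you describe. The renormalization step is also the right framework. The genuine gap is in the final transfer to the parameter plane. For the conclusion you need two things that ``density and continuity of $\tau_W$ at irrational angles'' cannot deliver: (a) that the parameter ray $R(\tau_W(\theta))$ \emph{lands} at all, and (b) that its landing point is $\iota_W(c)$. Landing of parameter rays is not a closed condition under convergence of angles: if $\theta_n\to\theta$ with each $R(\theta_n)$ landing at $c_n$ and $c_n\to c'$, one cannot conclude that $R(\theta)$ lands at $c'$ without local connectivity of the relevant piece of $\partial\mathcal{M}$ (or a substitute such as Yoccoz's puzzle estimates, shrinking of parapuzzle pieces, or --- in the real case actually needed by this paper --- the known landing of rays at real angles). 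Your own text flags this (``requires controlling... appealing either to local connectivity results... or to the puzzle machinery''), but then the last sentence asserts the general case follows from density and continuity, which is precisely the step that fails: continuity of $\tau_W$ gives convergence of \emph{angles}, not convergence of \emph{landing points}, and the landing point of a limit ray can a priori be a strictly larger impression. Until that transfer is supplied (which is the actual content of Douady's argument), the proposal proves the statement only for periodic and preperiodic $\theta$, where the ray-pair/wake combinatorics can be checked directly.
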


\begin{figure}
\includegraphics[scale=0.4]{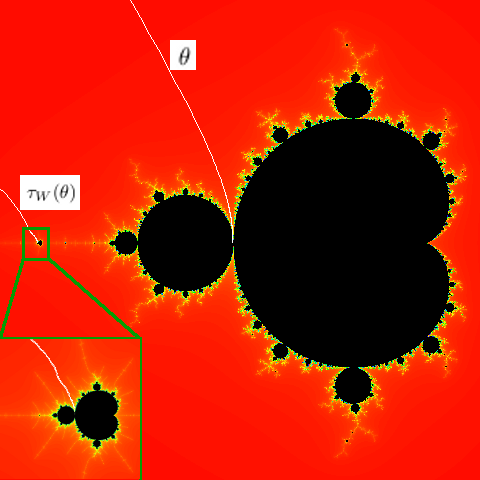}
\caption{The Mandelbrot set $\mathcal{M}$, and a baby copy of along the real axis (inside the framed rectangle).
The tuning homeomorphism maps the parameter at angle $\theta$ to the parameter at angle $\tau_W(\theta)$.}
\end{figure}

\subsection*{The real slice}

If we now restrict ourselves to the case when $c \in [-2, \frac{1}{4}]$ is real, 
each map $f_c$ acts on the real line, and has a well-defined topological entropy $h(f_c)$.
A classical result is the

\begin{theorem}[\cite{MT}, Douady-Hubbard]
The entropy of the quadratic family $h(f_c)$ is a continuous,
decreasing function of $c \in [-2, \frac{1}{4}]$.
\end{theorem}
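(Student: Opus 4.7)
The plan is to combine kneading-theoretic methods for the monotonicity with semicontinuity arguments for the continuity. For each $c \in [-2, 1/4]$, the map $f_c(z) = z^2 + c$ preserves the compact interval $I_c = [-\beta_c, \beta_c]$, where $\beta_c > 0$ is the larger real fixed point of $f_c$; restricted to $I_c$, the map is a continuous unimodal map with critical point at $0$, and all its dynamical complexity (hence all of $h(f_c)$) is captured there. So both continuity and monotonicity of $c \mapsto h(f_c)$ reduce to questions about a one-parameter family of unimodal maps.

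For the monotonicity, the natural tool is the Milnor-Thurston kneading invariant $K(c)$, defined as the signed itinerary of the critical value $f_c(0) = c$ under $f_c$ with respect to the partition of $I_c$ into $(-\beta_c, 0)$, $\{0\}$, $(0, \beta_c)$. First I would show that $c \mapsto K(c)$ is monotone in the parity-twisted lexicographic order on kneading sequences; this follows by comparing the orbits of the critical value for two nearby parameters and ruling out order reversals via a sign-tracking argument that uses the fact that $f_c$ depends monotonically on $c$ and is orientation-preserving on its right branch. Then I would invoke the Milnor-Thurston identity $h(f_c) = \log s(c)$, where $1/s(c)$ is the smallest positive zero of the kneading determinant $D_c(t) = \sum_{n \geq 0} \theta_n(c)\, t^n$ with $\theta_n(c) \in \{-1, 0, +1\}$ encoding the signs along the critical orbit. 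The main obstacle is proving that the smallest positive zero of $D_c$ moves monotonically when $K(c)$ does: this is the combinatorial core of the Milnor-Thurston theorem and requires a careful analysis of the power series $D_c$ and how its roots respond to coefficient-wise changes dictated by the symbolic motion.

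For continuity, upper semicontinuity of $c \mapsto h(f_c)$ follows from the general theorem of Misiurewicz-Szlenk for continuous families of piecewise monotone interval maps with uniformly bounded lap complexity. Lower semicontinuity does not follow from abstract principles, but it can be extracted from the kneading framework: the coefficients $\theta_n(c)$ are locally eventually constant in $c$ away from the countable set of postcritically finite parameters, and at such exceptional parameters one checks directly that nearby perturbations from either side produce kneading invariants whose associated smallest zeros of $D_c$ converge to $s(c)$. Combining upper semicontinuity, lower semicontinuity, and the monotonicity established above yields continuity throughout $[-2, 1/4]$, completing the proof.
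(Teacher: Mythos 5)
The paper does not actually prove this statement: it is quoted as background, with the proof deferred to \cite{MT} and, for the monotonicity, to Douady--Hubbard, so the only meaningful review is of your argument on its own terms.

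There is a genuine gap at the central step. You assert that monotonicity of the kneading invariant $c \mapsto K(c)$ ``follows by comparing the orbits of the critical value for two nearby parameters and ruling out order reversals via a sign-tracking argument.'' That is precisely the hard content of the theorem, and no such elementary real-variable argument is known to work: when the critical orbit returns close to the critical point, the itineraries of the critical values of two nearby parameters can a priori cross, and monotonicity of entropy genuinely fails for general smooth (even negative-Schwarzian) unimodal families, so any correct proof must use something specific to $z^2+c$. Every known proof injects such input: Milnor--Thurston and Douady--Hubbard deduce monotonicity from the rigidity statement that each admissible superattracting kneading datum is realized by exactly one real parameter (via polynomial-like maps / Thurston rigidity) together with the intermediate value theorem; Douady's proof --- the one whose combinatorics this paper's tuning formalism mirrors --- works with external angles of the Mandelbrot set; Tsujii's real proof replaces this by a nontrivial positivity (transversality) estimate. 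Without one of these ingredients your key step is an assertion, not a proof. The parts you do identify correctly are the reduction of entropy to the smallest zero of the kneading determinant and the fact that this zero depends monotonically on the kneading invariant, both of which are in \cite{MT}. On continuity, the semicontinuity attributions are off: Misiurewicz--Szlenk gives \emph{lower} semicontinuity of entropy for piecewise monotone maps via lap numbers, not upper; in any case, once monotonicity of $K(c)$ and fullness of the family are established, continuity follows more cleanly from the continuity of entropy as a function of the kneading invariant together with the intermediate value theorem, without a separate two-sided semicontinuity argument.
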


If $W$ is a real hyperbolic component, then $\iota_W$ preserves the real axis. We will call the \emph{tuning window relative to $c_0$} the 
intersection of the real axis with the baby Mandelbrot set generated by the hyperbolic component $W$ with root $c_0$. 
Also in the case of quadratic polynomials, plateaux of the entropy are tuning windows:

\begin{theorem}[\cite{Do}]
$h(f_c) = h(f_{c'})$ if and only if $c$ and $c'$ lie in the same tuning window relative to some $c_0$, with $h(f_{c_0}) > 0$.
\end{theorem}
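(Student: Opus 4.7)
The plan is to use Milnor--Thurston kneading theory together with the combinatorial description of tuning provided by the substitution $\tau_W$ on external angles introduced above. I view $c\mapsto h(f_c)$ as a continuous, non-increasing function of $c\in[-2,1/4]$; this is the content of the Milnor--Thurston monotonicity theorem cited in the excerpt.

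For the forward direction, suppose $c,c'$ both lie in the tuning window relative to $c_0$, the root of a real hyperbolic component $W$ of period $n$, and that $h(f_{c_0})>0$. Writing $c=\iota_W(c_*)$ and $c'=\iota_W(c'_*)$ for internal parameters $c_*,c'_*\in\mathcal{M}\cap\mathbb{R}$, the defining property of tuning is that the kneading sequence of $f_c$ is obtained from that of $f_{c_*}$ by substituting the binary blocks $\Sigma_0,\Sigma_1$ for the symbols $0,1$. I would show that this substitution forces the lap number of $f_c^{nk}$ to grow like $\lambda^k$ for a constant $\lambda=\lambda(W)$ independent of the internal parameter, so that by the Milnor--Thurston formula $h(f_c)=\lim_k \tfrac{1}{kn}\log \ell(f_c^{kn}) = \tfrac{1}{n}\log\lambda = h(f_{c_0})$; in particular $h(f_c)=h(f_{c'})$.

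For the converse, I would use monotonicity: if $h(f_c)=h(f_{c'})$ with $c<c'$, then $h$ is constant on $[c,c']$, so it suffices to prove that any nondegenerate plateau of $h$ is contained in a tuning window whose root has positive entropy. The idea is that outside such tuning windows every parameter is accumulated by postcritically finite parameters carrying strictly distinct (algebraic) values of entropy; this rules out local constancy. This step mirrors the role played in the $\alpha$-continued fraction setting by Proposition \ref{density} and Proposition \ref{rightaprrox}, where dominant parameters supply the witnesses to variation of the invariant.

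The main obstacle will be the constancy step in the forward direction, namely ruling out a devil's-staircase behaviour for $h$ within the tuning window: even though $h$ is combinatorially ``locked'' on an open dense subset of the tuning window, a priori it could still vary on the exceptional Cantor-like set of parameters with infinitely refined combinatorics. I would handle this in the same spirit as Proposition \ref{neutraltuning}: combine local H\"older continuity of $c\mapsto h(f_c)$ with a bound on the Hausdorff dimension of that exceptional set, and then invoke the general principle of Lemma \ref{extend} to force constancy throughout the tuning window.
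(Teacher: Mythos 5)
First, a point of order: the paper does not prove this statement. It is quoted from Douady \cite{Do} as background for the dictionary in the Appendix, so there is no in-paper proof to compare yours against; the proposal has to be judged on its own merits, and as it stands it has a genuine gap.

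The gap is in the forward direction, at exactly the point where the hypothesis $h(f_{c_0})>0$ must enter --- and in your sketch it never does. The assertion that the substitution $0\mapsto\Sigma_0$, $1\mapsto\Sigma_1$ forces the lap number of $f_c^{nk}$ to grow like $\lambda^k$ with $\lambda$ independent of the internal parameter $c_*$ is not a consequence of the substitution alone; it \emph{is} the theorem. For a period-$n$ renormalizable parameter the laps of $f_c^{nk}$ receive contributions both from the dynamics outside the cycle of small intervals (growth governed by $h(f_{c_0})$) and from the renormalized map on that cycle (growth governed by $h(f_{c_*})$), and the correct relation is $h(f_{\iota_W(c_*)})=\max\bigl(h(f_{c_0}),\tfrac{1}{n}h(f_{c_*})\bigr)$. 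This is constant in $c_*$ only if $h(f_{c_0})\geq\tfrac{1}{n}\log 2$, and proving that every positive-entropy period-$n$ root satisfies this inequality is the heart of Douady's argument. Your sketch would ``prove'' constancy just as well for the period-doubling window rooted at $c_0=-3/4$, where in fact $h(f_{\iota_W(c_*)})=\tfrac{1}{2}h(f_{c_*})$ sweeps out all of $[0,\tfrac{1}{2}\log 2]$; so some step must fail, and it is this one.

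Two further remarks. The ``main obstacle'' you single out --- excluding a devil's staircase inside the window via H\"older continuity plus a Hausdorff dimension bound, in the spirit of Lemma \ref{extend} and Proposition \ref{neutraltuning} --- is a red herring here: you have already invoked the Milnor--Thurston/Douady--Hubbard theorem that $c\mapsto h(f_c)$ is monotone, so once the two endpoints $c_0=\iota_W(1/4)$ and $\iota_W(-2)$ of the window carry the same entropy (which again reduces to $h(f_{c_0})\geq\tfrac{1}{n}\log 2$), constancy on the whole window is immediate. The H\"older-plus-dimension device is needed in the $\alpha$-continued-fraction setting of this paper precisely because there $h(\alpha)$ is \emph{not} monotone; importing it here replaces a trivial step by a hard one while leaving the actual difficulty untouched. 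Finally, in the converse your key claim --- that outside positive-entropy tuning windows every parameter is accumulated by postcritically finite parameters with distinct entropies --- is essentially a restatement of the ``only if'' direction and is asserted rather than argued; note also that the equivalence as quoted must implicitly set aside the zero-entropy locus, since two parameters with $h(f_c)=h(f_{c'})=0$ need not lie in any common tuning window with positive-entropy root.
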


The set of angles $\mathcal{R}$ corresponding to external rays which land on the real slice of the Mandelbrot set
 $\partial \mathcal{M} \cap \mathbb{R}$ has a nice combinatorial description: in fact it coincides, up to a set of Hausdorff dimension zero, 
with the set
% is characterized in the following way:
$$\mathcal{R} := \{ \theta \in \mathbb{R}/\mathbb{Z} \ :\ T^{k+1}(\theta) \leq T(\theta) \ \forall k \geq 0 \}$$
where $T(x) := \min \{ 2x, 2-2x \}$ is the usual tent map (see \cite{BCIT}, Proposition 3.4). 
A more general introduction to the combinatorics of $\mathcal{M}$ and the real slice can be found in \cite{Za}.

\subsection*{Dictionary}

By using the above combinatorial description, one can establish an isomorphism between the real slice of the boundary of the Mandelbrot set 
and the bifurcation set $\mathcal{E}$ for $\alpha$-continued fractions. Indeed, the following is true:

\begin{proposition}[\cite{BCIT}, Theorem 1.1 and Proposition 5.1]
The map $\varphi : [0, 1] \rightarrow [0,\frac{1}{2}]$
$$[0; a_1, a_2, \dots ] \mapsto 0.0\underbrace{1\dots 1}_{a_1}\underbrace{0\dots0}_{a_2}\dots$$
is a continuous bijection which maps the bifurcation set for $\alpha$-continued fractions $\mathcal{E}$ to 
the set of real rays $\mathcal{R} \cap [0, \frac{1}{2}]$.
\end{proposition}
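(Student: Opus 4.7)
The plan is to establish the proposition in three stages: (a) verify that $\varphi$ is a continuous, strictly monotone bijection onto its image; (b) determine explicitly how the tent map $T$ acts on $\varphi(x)$ in terms of the continued-fraction data of $x$; (c) translate the orbit condition $G^k(x) \geq x$ defining $\mathcal{E}$ into the orbit condition $T^{k+1}(\theta) \leq T(\theta)$ defining $\mathcal{R}$.

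For (a), I would note that for $x = [0; a_1, a_2, \dots] \in (0, 1)$ each partial quotient satisfies $a_i \geq 1$, so $\varphi(x)$ begins with the two bits $01$ and lies in $[1/4, 1/2]$; the endpoints $0$ and $1$ are handled as boundary conventions. Strict monotonicity follows because the alternating ordering on continued fractions from Section~\ref{backg} (flipping sign at odd versus even positions) precisely mirrors the block structure of $\varphi(x)$: lengthening an odd-indexed block $a_{2k+1}$ enlarges a run of $1$'s and hence increases the dyadic value, while lengthening an even-indexed block $a_{2k}$ enlarges a run of $0$'s and decreases it. Bijectivity follows because every binary expansion in the image decomposes uniquely into maximal monochromatic runs whose lengths recover the partial quotients, and the dyadic identification $0.\beta\, 1\, \overline{0} = 0.\beta\, 0\, \overline{1}$ fuses the two continued-fraction representations of each rational into a single image point. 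Continuity at irrational points is then automatic for any monotone function with dense image.

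For (b), a direct verification shows that on binary expansions $T$ acts by deleting the leading bit and, when that bit was a $1$, complementing all subsequent bits. Applying this iteratively to $\varphi(x) = 0.\,0\,1^{a_1}\,0^{a_2}\,1^{a_3}\cdots$ yields $T(\varphi(x)) = 0.\,1^{a_1}\,0^{a_2}\,1^{a_3}\cdots$, whose binary expansion begins with the full first $1$-block of length $a_1$. For any $m \geq 2$, the iterate $T^m(\varphi(x))$ begins either inside a $0$-block (yielding a value strictly below $T(\varphi(x))$) or, after complementations, with a $1$-block of length strictly less than $a_1$. Moreover, one can single out a subsequence of indices $n_k$ along which $T^{n_k}(\varphi(x))$ agrees, up to an explicit reflection governed by a parity count of complementations, with $\varphi(G^k(x))$.

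With (a) and (b) in hand, step (c) is a direct translation. By monotonicity of $\varphi$, the condition $G^k(x) \geq x$ for all $k \geq 0$ is equivalent to $\varphi(G^k(x)) \geq \varphi(x)$ for all $k$. Combining this chain of inequalities with the block-length comparison of (b) (which bounds every $T^m(\varphi(x))$ with $m \geq 1$ above by $T(\varphi(x))$) yields $T^{k+1}(\varphi(x)) \leq T(\varphi(x))$ for all $k \geq 0$, namely $\varphi(x) \in \mathcal{R}$; the reverse implication follows by running the same equivalences backwards. The main obstacle is step (b): the bookkeeping of complementations introduced by $T$ whenever the leading bit is a $1$, and in particular the verification that \emph{every} intermediate iterate $T^m(\varphi(x))$, and not merely those in the distinguished subsequence $n_k$, is dominated by $T(\varphi(x))$. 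Since the defining condition of $\mathcal{R}$ quantifies over all forward iterates, a uniform block-by-block comparison that tracks the parity of complementations is the real heart of the proof.
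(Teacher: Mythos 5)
First, a point of reference: the paper does not prove this proposition at all --- it is imported verbatim from \cite{BCIT} (Theorem 1.1 and Proposition 5.1) --- so there is no internal argument to compare yours against, and I can only judge your outline on its own terms. Its architecture (monotone bijectivity of $\varphi$, an explicit computation of how the tent map $T$ acts on $\varphi(x)$, then a translation of the two orbit conditions) is the natural one and, as far as I can tell, the one used in the cited source. Two things need repair. The first is an orientation error that propagates into your step (c): $\varphi$ is order-\emph{reversing}, not order-preserving. Indeed $\varphi([0;\overline{1}]) = 0.\overline{01}$ (base $2$) $= 1/3$, while $\varphi([0;\overline{2}]) = 0.\overline{0110} = 2/5$, and $[0;\overline{1}] = g$ is larger than $[0;\overline{2}] = \sqrt{2}-1$. (Your own mechanism in (a) already shows this: enlarging $a_1$ decreases $x$ but lengthens the leading run of $1$'s and so increases the dyadic value.) Consequently the equivalence you assert in (c), namely that $G^k(x)\geq x$ for all $k$ iff $\varphi(G^k(x)) \geq \varphi(x)$ for all $k$, carries the wrong inequality; taken literally it would deliver $T^{k+1}(\theta) \geq T(\theta)$, the opposite of membership in $\mathcal{R}$. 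The fix is simply to reverse the inequality, after which the signs come out right, but the step as written fails.

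The second issue is the one you flag yourself: step (b) is asserted rather than proved, and your description of it is not quite correct --- for a general $x$ the iterate at a distinguished time begins with a $1$-block of length $a_{k+1}$, which need not be smaller than $a_1$; that comparison is essentially the condition being characterized, not a fact about all $x$. The clean way to close the gap, with no parity bookkeeping at all, is this: writing $\theta = \varphi(x)$ and $n_k = 1 + a_1 + \cdots + a_k$, the shift-and-complement rule for $T$ gives $T^{n_k}(\theta) = 0.1^{a_{k+1}}0^{a_{k+2}}1^{a_{k+3}}\cdots = 2\,\varphi(G^k(x))$ for every $k \geq 0$, while every iterate $T^m(\theta)$ with $m \geq 2$ not of the form $n_k$ begins with the bit $0$ and is therefore at most $1/2 \leq 2\varphi(x) = T(\theta)$ automatically. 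Hence $\theta \in \mathcal{R}$ iff $2\varphi(G^k(x)) \leq 2\varphi(x)$ for all $k$, iff (by the \emph{reversed} monotonicity) $G^k(x) \geq x$ for all $k$, i.e. iff $x \in \mathcal{E}$. With these two repairs your outline becomes a complete proof.
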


As a corollary, 
$$
\begin{array}{c} I_r = (\alpha_1, \alpha_0)  \\
\textup{ is a maximal quadratic interval } 
\end{array}
\Leftrightarrow 
\begin{array}{c}
(\varphi(\alpha_1), \varphi(\alpha_0)) \\
\textup{is a real hyperbolic component} 
\end{array}
$$

Let us now check that the definition of tuning operators given in section \ref{sec:TW} and the Douady-Hubbard tuning correspond to 
each other via the dictionary:

\begin{proposition}
Suppose $W$ is a real hyperbolic component and let $c \in \partial \mathcal{M} \cap \mathbb{R}$ be its root. Moreover, let 
$\eta_0 \in [0, \frac{1}{2}]$ be the external angle of a ray which lands at $c$. Suppose $\eta_0$ has binary expansion 
$$\eta_0 = 0.\overline{0\underbrace{1\dots1}_{b_1}\underbrace{0\dots0}_{b_2} \dots \underbrace{0\dots0}_{b_n -1}}$$
Then, for each $x \in [0, 1]$,
$$\tau_W(\varphi(x)) = \varphi(\tau_r(x))$$
with $r = [0; b_1, \dots, b_n]$.
\end{proposition}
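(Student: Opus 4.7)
The plan is to compare the two binary expansions $\tau_W(\varphi(x))$ and $\varphi(\tau_r(x))$ bit by bit, exploiting that $\varphi$ encodes a continued fraction as alternating runs of $1$s and $0$s while $\tau_W$ substitutes length-$L$ blocks. I verify the identity for irrational $x$; rational $x$ then follows by continuity of $\varphi$ and a one-sided limit argument using Lemma~\ref{inj}.

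First, I identify $\Sigma_1$. The hypothesis gives $\Sigma_0 = 0\,1^{b_1}\,0^{b_2}\,\cdots\,0^{b_n-1}$, of length $L := b_1+\cdots+b_n$, with $n$ even (forced by the trailing $0$-block). For $r \in \mathbb{Q}_E$, the associated real hyperbolic component has complementary landing angles $\eta_0 + \eta_1 = 1$ (this complementarity fails precisely for rationals outside $\mathbb{Q}_E$, which correspond to deep satellite components), so $\Sigma_1 = \overline{\Sigma_0} = 1\,0^{b_1}\,1^{b_2}\,\cdots\,1^{b_n-1}$. The Douady--Hubbard formula then reads
\[
\tau_W(\varphi(x)) \;=\; 0.\,\Sigma_0\,\Sigma_1^{a_1}\,\Sigma_0^{a_2}\,\Sigma_1^{a_3}\,\cdots
\]
for $x = [0; a_1, a_2, \dots]$.

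Next, I unroll $\varphi(\tau_r(x))$. The continued fraction of $\tau_r(x)$ is $S_1\,S_0^{a_1-1}\,S_1\,S_0^{a_2-1}\,\cdots$, and since $|S_0|$ is even while $|S_1|$ is odd, the parity of each CF entry's position changes only when crossing an $S_1$-block. Splitting the binary expansion of $\varphi(\tau_r(x))$ into chunks of $L$ bits starting just after the ``$0.$'', I show that the first chunk equals $\Sigma_0$ (it absorbs the initial ``$0$'' of $\varphi$ together with all but the last run of the first $S_1$-block), and that each subsequent length-$L$ chunk equals $\Sigma_0$ or $\Sigma_1$ according to the corresponding bit of $\varphi(x)$; this matches the RHS block by block.

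The main obstacle is handling the \emph{boundary merging} inside $\Sigma_1^{a_i}$ (and $\Sigma_0^{a_i}$). At the seam between two consecutive copies of $\Sigma_1$, the trailing $1^{b_n-1}$ of the first fuses with the leading $1$ of the second to form a single run $1^{b_n}$; on the $\varphi(\tau_r(x))$ side, this run corresponds exactly to the last CF entry $b_n$ of the $S_0$-block sitting between the two $S_1$-blocks. Analogous merging occurs inside $\Sigma_0^{a_i}$, while $\Sigma_0\,\Sigma_1$ and $\Sigma_1\,\Sigma_0$ seams produce no merging. A small case analysis handles $b_n = 1$, where the $0^{b_n-1}$ block of $\Sigma_0$ is empty and $S_1 = (b_1, \dots, b_{n-1}+1)$ has length $n - 1$ rather than $n + 1$. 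Once the merging rules are aligned with the parity-flipping triggered by each $S_1$-block, the identity $\tau_W \circ \varphi = \varphi \circ \tau_r$ falls out by induction on the chunk index.
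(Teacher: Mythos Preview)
Your approach is essentially the same as the paper's: both expand $\varphi(\tau_r(x))$ by writing out the runs coming from the continued fraction $S_1 S_0^{a_1-1} S_1 S_0^{a_2-1}\cdots$ and then regroup the resulting binary word into length-$L$ blocks $\Sigma_0\,\Sigma_1^{a_1}\,\Sigma_0^{a_2}\cdots$. The paper carries out this regrouping in a single compact string identity rather than through your seam-merging narrative, and it neither treats rationals separately nor invokes $\eta_0+\eta_1=1$ to identify $\Sigma_1$ --- it simply reads $\Sigma_1 = 1\,0^{b_1}\cdots 1^{b_n-1}$ off from the regrouped expression.
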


\begin{proof}
Suppose $x = [0; a_1, a_2, \dots]$ so that 
$\tau_r(x) = [0; S_1 S_0^{a_1-1} S_1 S_0^{a_2-1} \dots]$.
Then by definition 
$$\varphi(\tau_r(x)) = 0.01^{b_1}\dots0^{b_n-1}1 \left( 0^{b_1} \dots 1^{b_n} \right)^{a_1-1} 0^{b_1} \dots 1^{b_n-1}0\left(1^{b_1} \dots 0^{b_n} \right)^{a_2-1} \dots=$$
$$= 0.\left(01^{b_1}\dots0^{b_n-1} \right)\left( 1  0^{b_1} \dots 1^{b_n-1}\right)^{a_1} \dots = 0.\Sigma_0 \Sigma_1^{a_1}\Sigma_0^{a_2}\Sigma_1^{a_3}\dots = \tau_W(\theta)$$
where $\theta = 0.01^{a_1}0^{a_2}\dots = \varphi(x) $.
\end{proof}

Let us point out that thinking in terms of binary expansions often simplifies the 
combinatorial picture: as an example, since the monotonicity of $\tau_W$ is straightforward, the dictionary  
gives a simpler alternative proof of Lemma \ref{inj}.

\end{document}